\def\red{}
\def\azulito{}
\def\azul{}
\def\blue{}
\newcommand{\aspas}[1]{``{#1}''}
\theoremstyle{definition}
\newtheorem{theorem}{Theorem}[section]
\newtheorem{lemma}[theorem]{Lemma}
\newtheorem{example}[theorem]{Example}
\newtheorem{proposition}[theorem]{Proposition}
\newtheorem{definition}[theorem]{Definition}
\newtheorem{remark}[theorem]{Remark}
\newtheorem{corollary}[theorem]{Corollary}
\numberwithin{equation}{section}
\begin{document}

\renewcommand{\bf}{\bfseries}
\renewcommand{\sc}{\scshape}

\newcommand{\TC}{\text{TC}}

\title[Higher topological complexity]%
{Higher topological complexity of a map}

\author{Cesar A. Ipanaque Zapata}
\address{Departamento de Matem\'{a}tica, Universidade de S\~{a}o Paulo Instituto de Matem\'{a}tica e  Estatística -IME/USP, R. do Matão, 1010 - Butantã, CEP:
05508-090 - S\~{a}o Paulo, Brasil}
\email{cesarzapata@usp.br}
\thanks{The first author would like to thank grant\#2016/18714-8 and grant\#2022/03270-8, S\~{a}o Paulo Research Foundation (FAPESP) for financial support.}

\author{Jes\'{u}s Gonz\'{a}lez}
\address{Departamento de Matem\'{a}ticas, Centro de Investigaci\'{o}n y de Estudios Avanzados del I. P. N.
Av. Instituto Polit\'{e}cnico Nacional n\'{u}mero 2508,
San Pedro Zacatenco, Mexico City 07000, M\'{e}xico}
\email{jesus@math.cinvestav.mx}

\subjclass[2010]{Primary 55M30; Secondary 55P10, 68T40.}                                    %

\keywords{Higher topological complexity, sectional category}

\begin{abstract}
The higher topological complexity of a space $X$, $\text{TC}_r(X)$, $r=2,3,\ldots$, and the topological complexity of a map $f$, $\text{TC}(f)$, have been introduced by Rudyak and Pave\v{s}i\'{c}, respectively, as natural extensions of Farber's topological complexity of a space. In this paper we introduce a notion of higher topological complexity of a map~$f$, $\text{TC}_{r,s}(f)$, for $1\leq s\leq r\geq2$, which simultaneously extends Rudyak's and Pave\v{s}i\'{c}'s notions. Our unified concept is relevant in the $r$-multitasking motion planning problem associated to a robot devise when the forward kinematics map plays a role in $s$ prescribed stages of the motion task. We study the homotopy invariance and the behavior of $\text{TC}_{r,s}$ under products and compositions of maps, as well as the dependence of $\text{TC}_{r,s}$ on $r$ and $s$. We draw general estimates for $\text{TC}_{r,s}(f\colon X\to Y)$ in terms of categorical invariants associated to $X$, $Y$ and $f$. In particular, we describe within one the value of $\text{TC}_{r,s}$ in the case of the non-trivial double covering over real projective spaces, as well as for their complex counterparts. 
\end{abstract}

\maketitle


\section{Introduction}\label{secintro}
In this article \aspas{space} means a topological space, and by a \aspas{map} we will always mean a continuous map. Fibrations are taken in the Hurewicz sense.

\smallskip
\azul{Consider an autonomous robot devise $\mathcal{A}$ performing on a known work space $\mathcal{W}$. The fundamental problem in geometric motion planning~(\cite{MP}) is to find a suitable (safe, efficient, optimal) path taking $\mathcal{A}$ from a given initial configuration to a goal configuration. Here, the term \emph{configuration} refers to a complete specification of every parameter in the robot's geometry at allowable (collision-free) states. If $\mathcal{C}$ stands for the space of all possible configurations of $\mathcal{A}$, the robot operation usually comes in the form of a \emph{forward kinematics map} $F\colon\mathcal{C}\to\mathcal{W}$ where, for a configuration $q\in\mathcal{C}$, $F(q)$ encodes the corresponding effect of the robot in the work space.}

\smallskip
\azul{In practice, motion tasks may involve constraints both on $q$ and on $F(q)$. In such a context, we are interested in a hybrid multitasking version of the motion planning problem. Given a reference configuration $q_0$ of $\mathcal{A}$ and a tuple
\begin{equation}\label{tuple}
(q_1,q_2,\ldots,q_s,e_1,e_2,\ldots,e_\ell)\in\mathcal{C}^s\times\mathcal{W}^{\ell}
\end{equation}
of $s$ desired configurations and $\ell$ desired effects of the robot, the goal is to describe a \emph{solving $r$-multi\-plath~$\gamma$}, namely, a family of paths $\gamma_1,\gamma_2,\ldots,\gamma_r$ in $\mathcal{C}$ with $r=s+\ell$, all starting at~$q_0$, such that:
\begin{itemize}
\item for $1\leq i\leq s$, $\gamma_i$ ends at $q_i$;
\item for $1\leq j\leq \ell$, $\gamma_{s+j}$ ends at a point in the inverse image $F^{-1}(\{e_j\})$. 
\end{itemize}
The model we propose in Section~\ref{htcf} is intended to study the topological instabilities in the resulting motion planning problem. This is done through the introduction of a numerical invariant that measures the minimal number of robust-to-noise instructions needed to solve, in a global manner, the $r$-multitasking motion problem above.}

\smallskip
\azul{Our work is motivated by Rudyak's $r$-th sequential topological complexity $\TC_r(X)$ of a space $X$, developed in~\cite{BGRT,rudyak2010higher}, and by Pave\v{s}i\'{c}'s topological complexity $\TC(f)$ of a map $f\colon X\to Y$, developed in~\cite{pavesic2019}. We define the $(r,s)$-higher topological complexity $\TC_{r,s}(f)$ of $f$ for integers $r\geq 2$ and $1\leq s\leq r$. Here, the parameter $s$ stands for the number of tasks for which the forward kinematic map must be taken into account, while $\ell:=r-s$ is the number of configurations in~(\ref{tuple}) above. Rudyak's and Pave\v{s}i\'{c}'s invariants are recovered with $f=1_X$ and $(r,s)=(2,1)$, respectively.}

\smallskip
\azul{In addition to its relevance in the multitasking problem for the forward kinematics map, the parameter $s$ in our invariant $\TC_{r,s}(f)$ plays a subtle role within more theoretical issues. For starters, our invariant is sensitive to the numbers $r-s$ (of configurations) and~$s$ (of effect tasks), a fact reflected in part by the fairly regular monotonic behavior}
$$\text{TC}_{r,s}(f)\leq\min\{\text{TC}_{r+1,s}(f),\text{TC}_{r+1,s+1}(f)\}$$ (see Proposition~\ref{prop:ineq}). \azul{For instance, for the double cover\azul{ing map} $p_n\colon S^n\to\mathbb{R}P^n$, we show $$\TC_{r,s}(p_n)=r+s(n-1)+\varepsilon_{r,s,n},$$ where\footnote{\azul{The precise value of $\varepsilon_{r,s,n}$ is given in Section~\ref{sec:example} for $r=s$ (any $n$), and for $n\in\{1,3,7\}$ (any $r$ and~$s$).}} $\varepsilon_{r,s,n}\in\{0,1\}$. On the other hand, the well known fact that Rudyak's $\TC_r(Y)$ of an $H$-space $Y$ agrees with the Lusternik-Schnirelmann category of $Y^{r-1}$ is encoded by $\TC_{r,r-1}(f)$ for any fibration over $Y$ (Corollary~\ref{h-space}). In general, the use of the bi-parameter $(r,s)$ allows us to get a discrimination of the topological properties of a space $Y$ in a manner which is finer than that provided by the several higher topological complexities $\TC_r(Y)$. For instance, Rudyak's monotonic behavior $\TC_r(Y)\leq\TC_{r+1}(Y)$ is refined by the inequalities
$$
\TC_r(Y)\leq\TC_{r,r}(f)\leq\TC_{r+1,r}(f)\leq\TC_{r+1}(Y),
$$
valid for any fibration $f\colon X\to Y$ (Remark~\ref{monorefinado}).}

\smallskip
We provide estimates for $\TC_{r,s}(f)$ for a general map $f\colon X\to Y$, \azul{possibly failing to be a fibration.} As a way of illustration, Propositions~\ref{prop:lower-cat-sec} and~~\ref{theorem:cohomological-estimate} \azul{yield}
$$
\max\{ \text{sec}^{\azul{1\times f^s}}(e_r^X),\text{sec}\hspace{.1mm}(f^s),\text{nil}\left(\text{Ker}((\Delta_{r-s},{}^sf)^\ast)\right)\} \leq \text{TC}_{r,s}(f)
\leq \text{sec}\hspace{.1mm}(f^s)\cdot \text{sec}^{\azul{1\times f^s}}(e_r^X).
$$
\azul{We also study the homotopy invariance of $\TC_{r,s}$ together with its behavior under composition of maps. In fact, virtually all properties developed in~\cite{pavesic2019} for the case $(r,s)=(2,1)$ are extended here to the higher TC realm. Yet, unlike Pave{\v{s}}i{\'c}'s approach, we work with the standard (and better suited for actual applications) definition of the sectional number of a map in terms of open coverings (reviewed in Section~\ref{secprelim}).} 

\smallskip
Rudyak and Soumen \cite{rudyak2022} have recently introduced a notion of higher topological complexity $\mathrm{TC}^{RS}_r(f)$ of a map $f$. Their concept is compared to ours. For instance, \red{in Corollary~\ref{cor:unification}, we \azulito{obtain} that the \azulito{equalities} $\mathrm{TC}^{RS}_{r}(f)=\mathrm{TC}_{r,r}(f)=\text{TC}_{r,r-1}(f)$ \azulito{hold} for any fibration $f$ \azulito{admiting} a section. \azulito{Additionally, we show that, for any map $f$ (possibly not a fibration), Rudyak and Soumen's $\mathrm{TC}^{RS}_{r}(f)$ is in fact a generalization of Murillo-Wu's notion of topological complexity of $f$ (Proposition~\ref{rs-wm-s}), and that, under special conditions, our $\TC_{r,s}(f)$ with large $s$ unifies previous notions of topological complexity (Corollary~\ref{cor:unification}).}} 

\section{Preliminaries on sectional numbers}\label{secprelim}
Given a map $f:X\to Y$ and a subset $A$ of $Y$, we say that a map $s:A\to X$ is a \textit{\azul{local} section} \azul{of $f$} if $f\circ s=incl_A$, and a \textit{\azul{local} homotopy section} \azul{of $f$} if  $f\circ s\simeq incl_A$, where $incl_A:A\to Y$ is the inclusion map. \azul{The} \textit{sectional number} $\text{sec}(f)$ is the \azul{least integer~$m$ such that $Y$ can be covered by $m$ open subsets each of which admits a local section of~$f$. We set $\text{sec}(f)=\infty$ if no such $m$ exists. Likewise, the} \textit{sectional category} $\text{secat}(f)$ is the \azul{least integer~$m$ such that $Y$ can be covered by $m$ open subsets each of which admits a local homotopy section of~$f$. Again, we set $\text{secat}(f)=\infty$ if no such $m$ exists.} \azul{See~\cite{berstein1961}.} 

\smallskip
\azul{Note that $f$ is forced to be surjective whenever $\text{sec}(f)<\infty$. Furthermore, the inequality $\text{secat}(f)\leq\text{sec}(f)$ holds for any map $f$. Additionally, from} the homotopy lifting property, a homotopy section of a fibration can \azul{be replaced} by a strict section. \azul{In particular,} $\text{secat}(f)=\text{sec}(f)$ when $f$ is a fibration.

\smallskip
For $f:X\to Y$ and $g:Y\to Z$, we define the \azul{\emph{sectional number}} $\text{sec}^g(f)$ as the least integer $n$ for which $Y$ admits a covering \azul{by} $n$ open sets $U_i$ such that over each~$U_i$ there is a map $s_i:U_i\to X$ with $g\circ f\circ s_i=g_{\mid U_i}$. Likewise, the \textit{sectional category} $\mathrm{secat}^{g}(f)$ is the least integer $n$ for which $Y$ admits a covering by $n$ open sets $U_i$ such that over each~$U_i$ there is a map $s_i:U_i\to X$ with $g\circ f\circ s_i\simeq g_{\mid U_i}$. \azulito{As reviewed at the end of this section,} the invariant $\mathrm{secat}^{g}(f)$ is studied by Murillo and Wu in \cite{murillo2019}. \azul{The following fact is \azulito{straightforward to prove}:} 

\begin{lemma}\label{lem:sec-composite}
Let $f:X\to Y$, $g:Y\to Z$ \azul{and $\phi:Z\to W$} be \azul{arbitrary} maps. We have
$$ \text{sec}^{\phi\circ g}(f)\leq \text{sec}^g(f)\leq\text{sec}(f).$$
\end{lemma}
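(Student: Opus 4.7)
The plan is to verify both inequalities directly from the definitions by observing that the defining equation for each of the three sectional numbers can be obtained from the next, stricter one, simply by post-composing with a map.

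For the second inequality $\text{sec}^{g}(f)\leq\text{sec}(f)$, I would start with an open cover $\{U_i\}_{i=1}^{n}$ of $Y$ realizing $\text{sec}(f)=n$, together with local sections $s_i\colon U_i\to X$ of $f$ (so that $f\circ s_i=\text{incl}_{U_i}$). Post-composing with $g$ yields $g\circ f\circ s_i = g\circ \text{incl}_{U_i} = g_{|U_i}$, so the same open cover together with the same $s_i$ witnesses that $\text{sec}^{g}(f)\leq n=\text{sec}(f)$. (If $\text{sec}(f)=\infty$ the inequality is trivial.)

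For the first inequality $\text{sec}^{\phi\circ g}(f)\leq\text{sec}^{g}(f)$, I would start with an open cover $\{U_i\}_{i=1}^{n}$ of $Y$ realizing $\text{sec}^{g}(f)=n$, together with maps $s_i\colon U_i\to X$ satisfying $g\circ f\circ s_i = g_{|U_i}$. Post-composing with $\phi$ yields $\phi\circ g\circ f\circ s_i = \phi\circ g_{|U_i} = (\phi\circ g)_{|U_i}$, so the same data witnesses that $\text{sec}^{\phi\circ g}(f)\leq n=\text{sec}^{g}(f)$.

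There is essentially no obstacle here: the lemma is a formal consequence of the fact that post-composition preserves strict equalities of maps, so no homotopy-theoretic input (and in particular no fibration hypothesis) is required. The statement should be thought of as saying that weakening the condition to be satisfied by the local sections can only decrease the number of open sets needed.
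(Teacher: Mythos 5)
Your proof is correct, and it is exactly the straightforward argument the paper has in mind when it labels the lemma as ``straightforward to prove'' without giving a proof: both inequalities follow by post-composing the defining identity for local (relative) sections with $g$ and $\phi$ respectively.
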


\azul{Recall} the pathspace construction from \cite[pg. 407]{hatcheralgebraic}. For a \azul{map} $f:X\to Y$, consider the space 
\begin{equation*}
E_f=\{(x,\gamma)\in X\times PY\mid~\gamma(0)=f(x)\},  
\end{equation*} where $PY=Y^I$ is the space of all paths $[0,1]\to Y$. The map \begin{equation*}
\rho_f:E_f\to Y,~(x,\gamma)\mapsto \rho_f(x,\gamma)=\gamma(1),
\end{equation*} is a fibration. Further\azul{more}, the projection \azul{onto} the first coordinate $E_f\to X,~(x,\gamma)\mapsto x$, is a homotopy equivalence with homotopy inverse $c:X\to E_f$ given by $x\mapsto (x,\gamma_{f(x)})$, where $\gamma_{f(x)}$ is the constant path at $f(x)$. \azul{This renders the factorization $$\left(X\stackrel{f}\to Y\right)\,=\,\left(X\stackrel{c}{\to} E_f\stackrel{\rho_f}{\to} Y\right),$$ a composition} of a homotopy equivalence \azul{followed by} a fibration. \azul{Furthermore, $f$ is a fibration if and only $f$ admits a lifting function, i.e., a map $\Gamma\colon E_f\to PX$ such that, for each $(x,\gamma)\in E_f$, we have}
\begin{equation}\label{lifun}
\azul{\Gamma(x,\gamma)(0)=x\text{ \ \ and \ \ } f\circ\Gamma(x,\gamma)=\gamma.}
\end{equation}

\smallskip By a \textit{quasi pullback} we mean a \azul{strictly} commutative diagram
\begin{eqnarray}\label{xfy}
\xymatrix{ \rule{3mm}{0mm}& X^\prime \ar[r]^{\azul{\varphi'}} \ar[d]_{f^\prime} & X \ar[d]^{f} & \\ &
       Y^\prime  \ar[r]_{\,\,\varphi} &  Y &}
\end{eqnarray} 
such that\azul{,} for any \azul{strictly commutative diagram as the one on the left hand-side of~(\ref{diagramadoble}),} there exists a (not necessarily unique) map $h:Z\to X^\prime$ \azul{that renders a strictly commutative diagram as the one on the right hand-side of~(\ref{diagramadoble}).}
\begin{eqnarray}\label{diagramadoble}
\xymatrix{
Z \ar@/_10pt/[dr]_{\alpha} \ar@/^30pt/[rr]^{\beta} & & X \ar[d]^{f}  & & &
Z\rule{-1mm}{0mm} \ar@/_10pt/[dr]_{\alpha} \ar@/^30pt/[rr]^{\beta}\ar[r]^{h} & 
X^\prime \ar[r]^{\azul{\varphi'}} \ar[d]_{f^\prime} & X \\
& Y^\prime  \ar[r]_{\,\,\varphi} &  Y & & & & Y^\prime &  \rule{3mm}{0mm}}
\end{eqnarray}
Note that such a condition amounts to saying that $X'$ contains the canonical pullback $Y'\times_Y X$ determined by $f$ and $\varphi$ as a retract in a way that is compatible with the mappings into $X$ and $Y'$.

\smallskip For convenience, we record the following standard properties, most of which appear in Chapter~4 of~\cite{zapata2022}: 
\begin{lemma}\label{lem:proper-sec-secat}
\noindent 
\begin{enumerate}
\item If~(\ref{xfy}) is a quasi pullback, then $$\text{sec}\hspace{.1mm}(f^\prime)\leq \text{sec}\hspace{.1mm}(f).$$
    \item For a map $f:X\to Y$, $$\text{secat}(\rho_f)= \text{secat}(f).$$
    \item If $f,g:X\to Y$ are homotopic maps (which we shall denote by $f\simeq g$), then $$\text{secat}(f)=\text{secat}(g).$$
    \item If $f:X\to Y$ and $g:Y\to Z$ are maps, then \[\text{sec}(g)\cdot\text{sec}^g(f)\geq\text{sec}(g\circ f)\geq\max\{\text{sec}(g),\text{sec}^g(f)\}.\] In particular, $\text{sec}(g\circ f)=\text{sec}^g(f)$ provided $g$ admits a section.
    \item If $p:E\to B$ is a fibration, then $$\text{sec}\hspace{.1mm}(p)\leq \text{cat}(B).$$ In particular, $\text{secat}(f)\leq \text{cat}(Y)$ for any map $f:X\to Y$.
     \item If $f:X\to Y$ is null-homotopic, then $$\text{secat}(f)=\text{cat}(Y).$$
     \item (cf.~\cite[Proposition 20, pg.~83]{schwarz1966}) Let $f:X\to Y$ be a map with $Y$ normal. If $\{C_1,\ldots,C_k\}$ and $\{D_1,\ldots,D_{\ell}\}$ are open coverings of $Y$ such that on each $C_i\cap D_j$ there exists a section of $f$, then
\[\text{sec}(f)\leq k+\ell-1.\]
\item For a space $Z$ and a map $f:X\to Y$, $$
    \mathrm{sec}\hspace{.1mm}(1_Z\times f)=\mathrm{sec}\hspace{.1mm}(f)\text{ \ \ and \ \ }
    \mathrm{secat}\hspace{.1mm}(1_Z\times f)=\mathrm{secat}\hspace{.1mm}(f).
$$
\end{enumerate}
\end{lemma}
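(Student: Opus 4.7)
The plan is to prove each item separately, since they are standard folkloric facts packaged together; most reduce to covering manipulations and homotopy lifting, so I would present them in an order that lets later parts reuse earlier ones.

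I would begin with the structural items (1)--(3). For (1), given an open cover $\{U_i\}$ of $Y$ with local sections $s_i\colon U_i\to X$ of $f$, I pull back to obtain $V_i=\varphi^{-1}(U_i)$, form the strictly commutative square with maps $s_i\circ\varphi|_{V_i}\colon V_i\to X$ and $\mathrm{incl}_{V_i}\colon V_i\to Y'$, and invoke the quasi-pullback property to get $h_i\colon V_i\to X'$ with $f'\circ h_i=\mathrm{incl}_{V_i}$; this gives an open cover of $Y'$ of the same cardinality with local sections of $f'$. For (2), one direction uses the fact that the projection $E_f\to X$ is a homotopy equivalence, which transports local homotopy sections of $\rho_f$ into local homotopy sections of $f$ (since $\rho_f\circ c\simeq f$); the reverse uses $c$ to promote a local homotopy section of $f$ to one of $\rho_f$. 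Item (3) is immediate: given $\{U_i\}$ with local homotopy sections $s_i$ of $f$, one has $g\circ s_i\simeq f\circ s_i\simeq \mathrm{incl}_{U_i}$, so the same cover works for $g$.

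Next I would deal with the composition and estimate items (4) and (5). For (4), the lower bound $\text{sec}(g\circ f)\geq \text{sec}(g)$ is obtained by observing that any local section $\sigma$ of $g\circ f$ on $U\subset Z$ yields $f\circ\sigma$ as a local section of $g$; the lower bound $\text{sec}(g\circ f)\geq \text{sec}^g(f)$ is similar, since a section of $g\circ f$ over $U\subset Z$ restricts, via $U\cap g(Y)$ pulled back to $Y$, to a section relative to $g$ (the precise pullback argument being the only delicate bookkeeping). The upper bound is a standard refinement: if $\{V_j\}$ covers $Z$ with sections $\tau_j$ of $g$, and $\{U_i\}$ covers $Y$ with maps $\sigma_i\colon U_i\to X$ satisfying $g\circ f\circ\sigma_i=g|_{U_i}$, then $\{V_j\cap \tau_j^{-1}(U_i)\}$ is a cover of $Z$ by $\text{sec}(g)\cdot\text{sec}^g(f)$ open sets, with $\sigma_i\circ\tau_j$ a local section of $g\circ f$ on each piece. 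When $g$ has a section $\eta$, the identity $g\circ f\circ\sigma_i=g|_{U_i}$ forces $\eta\circ g\circ f\circ\sigma_i\simeq \sigma_i$ only up to homotopy in general, so I must instead argue at the level of $\text{sec}$ directly: restricting over $\eta^{-1}(U_i)$ yields $\text{sec}(g\circ f)\leq \text{sec}^g(f)$, giving equality via the lower bound. Item (5) is the classical fact that a categorical open set $U\subset B$ is null-homotopic, so the constant map $B\to U\to E$ can be homotoped (via the fibration property of $p$) to a strict local section.

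Finally, I would handle (6), (7), (8). For (6), the bound $\text{secat}(f)\leq\text{cat}(Y)$ follows from (5) applied to $\rho_f$ together with (2); for the reverse inequality, if $f\simeq c$ is constant and $U$ admits a local homotopy section $s$ of $f$, then $\mathrm{incl}_U\simeq f\circ s\simeq c\circ s$, which factors through a point, so $U$ is null-homotopic in $Y$. Item (7) is the standard Schwarz refinement: on $C_i\cap D_j$ choose local sections, then for each $i$ define $W_i=C_i\setminus\overline{\bigcup_{j>i}D_j}$ and analogous sets, producing, after a careful inductive regrouping using normality of $Y$, an open cover of $Y$ of size $k+\ell-1$ with local sections. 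Item (8) is a direct product argument: open covers of $Y$ pull back via $\text{pr}_2\colon Z\times Y\to Y$ to covers of $Z\times Y$ with (homotopy) sections of $1_Z\times f$ given by $(z,y)\mapsto(z,s(y))$; conversely any (homotopy) section of $1_Z\times f$ over a slice $\{z_0\}\times U$ produces one of $f$ over $U$, and open neighborhoods of such slices can be taken. The main technical nuisance is the Schwarz-style combinatorics in (7) and the careful bookkeeping of the equality clause in (4); the remaining items are essentially formal.
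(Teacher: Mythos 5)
The paper does not supply a proof of this lemma; it simply records the statements as standard and refers the reader to Chapter 4 of Zapata's thesis. Your sketches give the standard arguments and are, in substance, correct for every item: (1) pulls the section back through $\varphi$ and applies the quasi-pullback property exactly as one should; (2) exploits the homotopy equivalence $c\colon X\to E_f$ with $\rho_f\circ c=f$; (3) is a one-line change of representative; (4) uses precomposition with $g$ to get $\mathrm{sec}(g\circ f)\geq\mathrm{sec}^g(f)$ and the double-cover refinement for the upper bound; (5), (6), (8) are the expected lifting and slicing arguments.

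Two local imprecisions are worth flagging, even though they do not undermine the plan. In item (5), the phrase \emph{the constant map $B\to U\to E$} should read \emph{the constant map $U\to E$} (at a chosen point of $p^{-1}(b)$ for some $b$ with $\mathrm{incl}_U\simeq \mathrm{const}_b$); the homotopy lifting property then produces a strict section over $U$. In item (7), the displayed formula $W_i=C_i\setminus\overline{\bigcup_{j>i}D_j}$ mixes the indices of the two covers and is not the correct construction; the genuine Schwarz argument first shrinks both covers $\{C_i\}$ and $\{D_j\}$ (using normality), then for each $m=1,\ldots,k+\ell-1$ forms a set $E_m$ as a union of suitably refined intersections $C_i\cap D_j$ with $i+j=m+1$ that are pairwise disjoint for fixed $m$, so sections can be patched together across each $E_m$. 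Since you explicitly acknowledge this as a sketch to be filled in (\emph{after a careful inductive regrouping}) and the paper itself only cites Schwarz, this is acceptable provided the final write-up replaces the incorrect formula by the actual construction.
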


\smallskip
The sectional number of the canonical pullback $\varphi^\ast(p):K\times_B E\to K$ on the left hand-side of~(\ref{comsqre}) below,
denoted by $\text{sec}_\varphi(p)$, \azulito{is} called \textit{relative sectional number}.
\begin{eqnarray}\label{comsqre}
\xymatrix{
K\times_B E \ar[r]^{ } \ar[d]_{\varphi^\ast (p)} & E \ar[d]^{p} & & & 
X \ar[r]^{\phi} \ar[d]_{f} & W \ar[d]^{h} \\
       K \ar[r]_{\,\, \varphi}  &  B & & & Y \ar[r]_{g}  &  Z}
\end{eqnarray}

\begin{lemma}\label{sec-sec}
\azul{The inequalities $\text{sec}_g(h)\leq \text{sec}^g(f)\leq \text{sec}(f)$ hold for any commutative square as the one on the right hand-side of~(\ref{comsqre}). If the square is a quasi pullback, then in fact $\text{sec}_g(h)= \text{sec}^g(f)=\text{sec}(f)$.}
\end{lemma}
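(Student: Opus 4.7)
The plan is to split the statement into three inequalities: $\text{sec}_g(h) \leq \text{sec}^g(f)$, $\text{sec}^g(f) \leq \text{sec}(f)$, and, under the quasi pullback hypothesis, the reverse inequality $\text{sec}(f) \leq \text{sec}_g(h)$. The middle inequality is already recorded in Lemma~\ref{lem:sec-composite}, so I would simply appeal to it.

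For $\text{sec}_g(h) \leq \text{sec}^g(f)$, I would start from an open cover $\{U_i\}$ of $Y$ together with maps $s_i \colon U_i \to X$ satisfying $g \circ f \circ s_i = g_{|U_i}$, and promote each $s_i$ to a section of the canonical pullback $g^{\ast}(h) \colon Y\times_Z W\to Y$ by the formula $\widetilde{s}_i(y) := (y, \phi(s_i(y)))$. Landing in $Y\times_Z W$ is guaranteed by the relation $h \circ \phi \circ s_i = g \circ f \circ s_i = g_{|U_i}$, coming from the commutativity of the right hand square of~(\ref{comsqre}) combined with the defining property of $s_i$; and $g^{\ast}(h)\circ \widetilde{s}_i$ is evidently the inclusion $U_i\hookrightarrow Y$, so $\widetilde{s}_i$ is a strict section over $U_i$.

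For the quasi pullback case, I would exploit the fact that the canonical projections $\pi_Y \colon Y\times_Z W\to Y$ and $\pi_W \colon Y\times_Z W\to W$ satisfy $h \circ \pi_W = g \circ \pi_Y$ by definition of the pullback. The universal property of the quasi pullback, applied with $Y \times_Z W$ playing the role of the test space $Z$ in~(\ref{diagramadoble}) and with $\alpha = \pi_Y$, $\beta = \pi_W$, then produces a map $r \colon Y \times_Z W \to X$ such that $f \circ r = \pi_Y$ (and $\phi\circ r=\pi_W$, though only the first relation will be used). Composing any section $\sigma \colon U\to Y\times_Z W$ of $g^{\ast}(h) = \pi_Y$ with $r$ produces a section of $f$ over the same open set $U$, since $f\circ r\circ\sigma=\pi_Y\circ\sigma$ is the inclusion. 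This yields $\text{sec}(f) \leq \text{sec}_g(h)$ and closes the loop of equalities. I do not anticipate any serious obstacle; the only point requiring a moment's care is the correct identification of the test data fed into the quasi pullback property in this last step.
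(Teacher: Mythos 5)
Your proposal is correct and follows essentially the same route as the paper: Lemma~\ref{lem:sec-composite} for the middle inequality, composing with $\phi$ to build lifts into the canonical pullback for the first, and the quasi pullback universal property for the reverse inequality. The only superficial difference is that in the last step you apply the universal property once globally (taking $Y\times_Z W$ itself as the test object to produce a single map $r$ with $f\circ r=\pi_Y$), whereas the paper invokes it locally over each open set $U$ carrying a lift of $g|_U$ through $h$; the two formulations are interchangeable.
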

\begin{proof}
\azul{For} $U\subset Y$ and $s:U\to X$ satisfying $g\circ f\circ s= g_{\mid U}$, the map $\sigma:U\to W$ given by $\sigma=\phi\circ s$ defines a lift of \azul{$g_{\mid U}$} through $h$. \azul{The first inequality asserted in the lemma then follows by observing (see~\cite[Proposição 4.5.16]{zapata2022}) that $\text{sec}_\varphi(p)$ can be defined in terms} of open covers $\{U_i\}$ of ~\azul{$K$ such} that each element of the cover admits a lift $\sigma_i:U_i\to E$ \azul{of $\varphi_{\mid U_i}$ through~$p$,} i.e., $p\circ \sigma_i=\varphi_{\mid U_i}$. \azul{The second inequality in the lemma comes from Lemma~\ref{lem:sec-composite}. The proof is complete by noticing that $\text{sec}(f)\leq\text{sec}_g(h)$ when the given square is a quasi pullback. Indeed, the quasi pullback hypothesis implies that any lift $\sigma\colon U\to W$ of $g_{\mid U}$ through $h$ can be lifted through $\phi$ to a local section $U\to X$ of $f$.}
\end{proof}

\begin{remark}
\azulito{Note that, when $p$ is a fibration,} $\text{sec}_\varphi(p)$ can be defined in terms of open covers $\{U_i\}$ of $K$ such that each element of the cover admits a homotopic lift $\sigma_i:U_i\to E$ of $\varphi_{\mid U_i}$ through~$p$, i.e., $p\circ \sigma_i\simeq \varphi_{\mid U_i}$.
\end{remark}

\azul{We close the section by \azulito{indicating how the sectional numbers we have just formalized capture the different versions in the literature} of (topological) complexity of \azulito{a map $f\colon X\to Y$}. Let} $e_2^{ X}:PX\to X\times X$ be the \azulito{double-evaluation} fibration given by $e_2^X(\gamma)=(\gamma(0),\gamma(1))$.

\begin{itemize}
\item The \textit{\azul{complexity}} of $f$, $\text{cx}(f)$, introduced by Pave{\v{s}}i{\'c} in~\cite{pavesic2017} \azul{(see also~\cite{pavesic2018}),} is the sectional number
$$\text{sec}(PX\stackrel{e_2^{ X}}{-\!\!\!-\!\!\!\longrightarrow} X\times X\stackrel{1_X\times f}{-\!\!\!-\!\!\!\longrightarrow} X\times Y).$$  \azul{When} $f$ is a fibration between ANRs \azul{spaces, the} number $\text{\azul{cx}}(f)$ coincides with the notion of topological complexity \azul{$\TC(f)$} studied in \cite{pavesic2019}. \azul{The complexity $\text{cx}(f)$ has recently been used in \cite{cesarmilnor,cesarsectional}.} 

\item
A different approach was \azul{taken} by Murillo and Wu in \cite{murillo2019}. \azul{Their} topological complexity of $f$, which we denote by $\mathrm{TC}^{MW}(f)$, is \red{given by 
$$\mathrm{TC}^{MW}(f)=\mathrm{secat}^{f\times f}(e_2^X),$$ i.e., } \azulito{the} least integer $n$ such that $X\times X$ can be covered by $n$ open sets $\{U_i\}_{i=1}^{n}$ on each of which there is a map $s_i:U_i\to PX$ satisfying $(f\times f)\circ e_2^{ X}\circ s_i\simeq (f\times f)_{\mid U_i}$. \azulito{Their} \textit{naive or strict topological complexity} of $f$, which we denote by $\mathrm{tc}^{MW}(f)$, is defined analogously, \azulito{except that one now requires} each of the maps $s_i:U_i\to PX$ to satisfy the stronger condition $(f\times f)\circ e_2^{ X}\circ s_i= (f\times f)_{\mid U_i}$\azul{. In other words,} $$\mathrm{tc}^{MW}(f)=\text{sec}^{f\times f}(e_2^{ X}).$$ As shown in \cite{murillo2019}, the inequality $\mathrm{TC}^{MW}(f)\leq \mathrm{tc}^{MW}(f)$ holds for any map $f$\azul{, while in fact $\mathrm{TC}^{MW}(f)=\mathrm{tc}^{MW}(f)$} when $f$ is a fibration. 

\item As detailed in Subsection~\ref{relationtoRS}, relative sectional numbers are closely related to Rudyak-Soumen's quasi-strong sectional category of a map. In fact, by extending ideas in Scott's study of the relative sectional number $$\text{sec}_{f\times f}(e^Y_2)$$ (\cite[Definition 3.1]{scott2022}), we show that Rudyak-Soumen's higher TC is in fact a generalization of Murillo-Wu's TC of a map. See Proposition~\ref{rs-wm-s} below.
\end{itemize}

\section{\azul{Higher} topological complexity}\label{htcf}
For $r\geq 2$, let $J_r$ be the wedge of $r$ closed intervals $[0,1]_i$, $i=1,\ldots,r$, where the zero points $0_i\in [0,1]_i$ are identified. For a space $X$, let $X^{J_r}$ denote the space of maps $\gamma:J_r\to X$ with the compact-open topology. Consider the fibration\footnote{Since $PX$ is homeomorphic to $X^{J_2}$, the notation $e_r^X$ \azul{is compatible with the use of $e^X_2$ in the previous section}.}
\begin{equation}\label{evaluation-fibration}
    e_r^X:X^{J_r}\to X^r,~e_r(\gamma)=\left(\gamma(1_1),\ldots,\gamma(1_r)\right),
\end{equation}
where \azul{$1_i\in [0,1]_i$. \azulito{Here we} regard} $X^{J_r}$ as the space of ordered \azul{$r$-multipaths} in $X$ all \azulito{whose components} have a common starting point. From \cite{rudyak2010higher}, the \textit{$r$-th \azul{higher} topological complexity} TC$_r(X)$ of $X$ is the sectional number of the fibration~(\ref{evaluation-fibration}). In  other  words, the $r$-th \azul{higher} topological complexity of $X$ is the smallest positive integer TC$_r(X)=k$ for which  the product $X^r$ is covered by $k$ open subsets $X^r=U_1\cup\cdots\cup U_k$ such \azul{that,} for any $i=1,2,\ldots,k$\azul{,} there exists a local section $s_i:U_i\to X^{J_r}$ of $e_r^X$ over $U_i$ (i.e., $e_r^X\circ s_i=incl_{U_i}$).

\smallskip Let $f:X\to Y$ be a map, and let \[e_{r,s}^f:X^{J_r}\to X^{r-s}\times Y^s,~e_{r,s}^f=(1_{X^{r-s}}\times f^s)\circ e_r^X,\] for $1\leq s\leq r$. For example, $e_{r,r-1}^f=(1_{X}\times f^{r-1})\circ e_r^X$ and $e_{r,r}^f=f^r\circ e_r^X$.

\begin{definition}\label{strong-higher-tc}
\noindent \begin{enumerate}
\item The \textit{strong $(r,s)$-th \azul{higher} topological complexity} of \azulito{a} map \azulito{$f\colon X\to Y$,} denoted by TC$_{r,s}(f)$, is the sectional number $\text{sec}\hspace{.1mm}(e_{r,s}^f)$ of the map $e_{r,s}^f$, that is, the least integer $m$ such that the cartesian product $X^{r-s}\times Y^s$ can be covered \azul{by} $m$ open subsets $U_i$ such \azul{that,} for any $i = 1, 2, \ldots , m$\azul{,} there exists a local section $s_i : U_i \to X^{J_r}$ of $e_{r,s}^f$, \azul{so} $e_{r,s}^f\circ s_i = incl_{U_i}$. If no such $m$ exists we set TC$_{r,s}(f)=\infty$. 
\item The \textit{homotopy $(r,s)$-th \azul{higher} topological complexity} of the map $f$, denoted by HTC$_{r,s}(f)$, is the sectional category $\text{secat}\hspace{.1mm}(e_{r,s}^f)$ of the map $e_{r,s}^f$, that is, the least integer $m$ such that the cartesian product $X^{r-s}\times Y^s$ can be covered with $m$ open subsets $U_i$ such \azul{that,} for any $i = 1, 2, \ldots , m$\azul{,} there exists a local homotopy section $s_i : U_i \to X^{J_r}$ of $e_{r,s}^f$, \azul{so} $e_{r,s}^f\circ s_i \simeq incl_{U_i}$. If no such $m$ exists we set HTC$_{r,s}(f)=\infty$.
\end{enumerate}
\end{definition}

\azul{Note that $f$ is forced to be surjective whenever $\TC_{r,s}(f)<\infty$.} The strong form of the \azul{higher} TC of a map is \azul{best suited for} applications. Accordingly, \azul{$\TC_{r,s}(f)$} \azul{will be the main focus in this work.}

\begin{remark}\label{remark:tc-path}
For $r\geq 2$, consider the evaluation fibration ${e_r^\prime}:PX\to X^r$ given by     \[e_r^\prime(\gamma)=\left(\gamma(0),\gamma\left(\dfrac{1}{r-1}\right),\ldots,\gamma\left(\dfrac{r-2}{r-1}\right),\gamma(1)\right).
\] \azul{We have commutative diagrams}
\begin{eqnarray*}
\xymatrix{ PX \ar[rr]^{\phi} \ar[rd]_{e^\prime_r} & & X^{J_r} \ar[ld]^{e_r^X} & \\
        &  X^r & &} & \xymatrix{ X^{J_r} \ar[rd]_{e_r^X}  \ar[rr]^{\psi}  & & PX \ar[ld]^{e^\prime_r}& \\
        &  X^r\azul{,} & &}
\end{eqnarray*}
where \azul{$\phi(\gamma)=(\gamma_1,\ldots,\gamma_r)$ and $\psi(\alpha_1,\ldots,\alpha_r)=\alpha_1\cdot (\overline{\alpha}_1\cdot \alpha_2)\cdot (\overline{\alpha}_2\cdot \alpha_3)\cdots (\overline{\alpha}_{r-1}\cdot \alpha_r).$ Here \azul{$\alpha\cdot\beta$ stands for the} concatenation \azul{of $\alpha$ and $\beta$,} $\overline{\alpha}(t)=\alpha(1-t)$ is the path $\alpha$ traversed in opposite direction and, for $i=1,2,\ldots,r$,}
$$\azul{\gamma_i(t)=\gamma\left(\frac{(i-1)\cdot t}{r-1}\right).}$$
Therefore
\begin{equation}\label{seqtial}
\mbox{\azul{$\text{TC}_{r,s}(f)=\text{sec}\left((1_{X^{r-s}}\times f^s)\circ e_r^\prime \right)$ \ \ and\ \ } $\text{HTC}_{r,s}(f)=\text{secat}\left((1_{X^{r-s}}\times f^s)\azul{{}\circ{}} e_r^\prime \right)$,}
\end{equation}
\azul{which explains the use of the name ``\emph{sequential} topological complexity'' as an alternative for ``\emph{higher} topological complexity''.}

\begin{remark}
\azul{As an abuse of notation, when using the ``sequential'' setting, we will keep writing $e^f_{r,s}$ 
for the map $(1_{X^{r-s}}\times f^s)\circ e'_r$ in~(\ref{seqtial}).}
\end{remark}

More generally, we can use other \azul{evaluation} maps to define $\text{TC}_{r,s}$ and $\text{HTC}_{r,s}$. \azul{For instance,} let~$G_r$ be any connected graph where $r$ ordered distinct vertices $v_1,\ldots,v_r$ have been selected, \azul{and consider} the evaluation map $e_{G_r}:X^{G_r}\to X^r$, $e_{G_r}(\gamma)=(\gamma(v_1),\ldots,\gamma(v_r))$. \azul{Then, as explained in~\cite[pages 2106--2107]{BGRT}, there are} commutative diagrams
\begin{eqnarray*}
\xymatrix{ X^{J_r} \ar[rr]^{ } \ar[rd]_{e_r^X} & & X^{G_r} \ar[ld]^{e_{G_r}} & \\
        &  X^r & &} & \xymatrix{ X^{G_r} \ar[rd]_{e_{G_r}}  \ar[rr]^{}  & & PX \ar[ld]^{e^\prime_r}& \\
        &  X^r & &}
\end{eqnarray*} \azul{which, together with~(\ref{seqtial}), yield}
\begin{align*}
\azul{\text{TC}_{r,s}(f)}&\azul{{}= \text{sec}\left((1_{X^{r-s}}\times f^s)\times e_{G_r} \right),}\\
\azul{\text{HTC}_{r,s}(f)}&\azul{{}= \text{secat}\left((1_{X^{r-s}}\times f^s)\times e_{G_r} \right).}
\end{align*}
\end{remark}

\begin{remark}\label{rem:fibration-fibration}
\noindent \begin{enumerate}
    \item By definition, the higher topological complexity $\text{TC}_{r,s}(1_X)$ of the identity map $1_X:X\to X$ coincides with the higher topological complexity $\text{TC}_r(X)$, i.e., $\text{TC}_{r,s}(1_X)=\text{TC}_r(X) \text{\azul{,} for any } s\in\{1,\ldots,r\}.$
    \item Note that $\text{HTC}_{r,s}(f)\leq \text{TC}_{r,s}(f)$ for any map $f$. Moreover, it is easy to see that $f$ is a fibration if and only if $e_{r,s}^f$ is a fibration \azul{(for instance, use Remark~\ref{remark:tc-path} in the proof of} \cite[Lemma 4.1]{pavesic2019}). So, we immediately obtain $\text{TC}_{r,s}(f)=\text{HTC}_{r,s}(f)$ for any fibration $f$.
\end{enumerate}
\end{remark}

\azul{The following result generalizes \cite[Proposition~3.3]{rudyak2010higher}.}

\begin{proposition}\label{prop:ineq} For any map \azul{$f\colon X\to Y$ and any $s=1,2,\ldots,r$,}
   $$\text{TC}_{r,s}(f)\leq\min\{\text{TC}_{r+1,s}(f),\text{TC}_{r+1,s+1}(f)\}.$$
\end{proposition}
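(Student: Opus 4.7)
The plan is to prove the two inequalities $\TC_{r,s}(f)\leq\TC_{r+1,s}(f)$ and $\TC_{r,s}(f)\leq\TC_{r+1,s+1}(f)$ separately, each by a short \emph{duplicate-and-drop} construction that turns local sections of $e_{r+1,s}^f$ (respectively $e_{r+1,s+1}^f$) into local sections of $e_{r,s}^f$ over a suitable pullback cover. The intuitive idea is that a duplicated coordinate in the base can be absorbed by forgetting one of the two multipath-branches that share the same endpoint.

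For the first inequality, I will define a diagonal-type duplication map in the $X$-factors,
$$d_1\colon X^{r-s}\times Y^s\longrightarrow X^{r+1-s}\times Y^s,\quad (x_1,\ldots,x_{r-s},y_1,\ldots,y_s)\longmapsto (x_1,x_1,x_2,\ldots,x_{r-s},y_1,\ldots,y_s),$$
together with a forgetful map
$$p_1\colon X^{J_{r+1}}\longrightarrow X^{J_r},\quad (\gamma_1,\gamma_2,\gamma_3,\ldots,\gamma_{r+1})\longmapsto(\gamma_1,\gamma_3,\gamma_4,\ldots,\gamma_{r+1})$$
that deletes the second branch (the remaining $r$ branches still share the original basepoint, so this is well-defined and continuous). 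Given a cover $\{V_i\}_{i=1}^{k}$ of $X^{r+1-s}\times Y^s$ with local sections $\sigma_i\colon V_i\to X^{J_{r+1}}$ of $e_{r+1,s}^f$, where $k=\TC_{r+1,s}(f)$, I will set $U_i:=d_1^{-1}(V_i)$ and $\tau_i:=p_1\circ\sigma_i\circ d_1|_{U_i}$. Continuity of $d_1$ ensures the $U_i$ are open, and they cover $X^{r-s}\times Y^s$ because the $V_i$ cover the image of $d_1$. A direct check tracking endpoints verifies the identity $e_{r,s}^f\circ\tau_i=\mathrm{incl}_{U_i}$: the duplicated coordinate $x_1$ produced by $d_1$ is precisely the one discarded by $p_1$. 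This will yield $\TC_{r,s}(f)\leq k$.

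For the second inequality, I will use the completely parallel construction, but now duplicating the first $Y$-coordinate and dropping the corresponding branch among the last $s+1$ positions. Explicitly,
$$d_2\colon X^{r-s}\times Y^s\longrightarrow X^{r-s}\times Y^{s+1},\quad (x_1,\ldots,x_{r-s},y_1,\ldots,y_s)\longmapsto (x_1,\ldots,x_{r-s},y_1,y_1,y_2,\ldots,y_s),$$
and $p_2\colon X^{J_{r+1}}\to X^{J_r}$ deletes the $(r-s+2)$-th branch. The same argument produces a cover of $X^{r-s}\times Y^s$ by $\TC_{r+1,s+1}(f)$ open sets each admitting a local section of $e_{r,s}^f$.

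The only points that require care are bookkeeping: making sure $p_1$ and $p_2$ are well-defined maps into $X^{J_r}$ (immediate, since removing a branch preserves the wedge-point structure), and verifying the endpoint identities after composing with $d_1,d_2$. Neither is a real obstacle; the argument is a direct generalization of Rudyak's monotonicity $\TC_r(X)\leq\TC_{r+1}(X)$ from \cite{rudyak2010higher}, adapted to keep track of which of the final $s$ entries pass through~$f$.
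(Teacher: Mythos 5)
Your proposal is essentially sound for $s<r$ and takes a genuinely different route from the paper's, but it has a gap at the boundary case $s=r$ of the first inequality, which the paper covers and in fact uses (see Remark~\ref{monorefinado}).

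The paper's proof embeds $X^{r-s}\times Y^s$ into the larger base by \emph{constant insertion}: it fixes $a\in X$ (respectively $b\in Y$) and uses the subspace inclusion $\varphi_a\times 1_{Y^s}$ (respectively $1_{X^{r-s}}\times i_b$), then restricts a local section from the larger base and post-composes with the forgetful map $\mu_{r,s}$. Your approach instead uses \emph{diagonal duplication} ($d_1$, $d_2$) and then drops the duplicated branch. For $1\leq s<r$ your construction is correct, slightly more canonical in that it avoids choosing base points, and the endpoint bookkeeping you outline does close up: the branch that $p_1$ (resp.\ $p_2$) discards is exactly the one whose endpoint $d_1$ (resp.\ $d_2$) duplicated.

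However, the first inequality $\TC_{r,s}(f)\leq\TC_{r+1,s}(f)$ must also hold for $s=r$. In that case $X^{r-s}=X^0$ is a point, the source of $d_1$ is $Y^r$, and the formula $(x_1,\ldots,x_{r-s},y_1,\ldots,y_s)\mapsto(x_1,x_1,\ldots)$ has no $x_1$ to duplicate, so $d_1$ is undefined. There is no natural diagonal replacement here either, since the extra slot lives in $X$ while all available coordinates live in $Y$. You have to fall back on the constant-insertion map $Y^r\hookrightarrow X\times Y^r$, $y\mapsto(a,y)$, for a chosen $a\in X$ (the trivial cases $X=\varnothing$ or $Y=\varnothing$ force $\TC_{r,s}(f)\in\{0,\infty\}$ on both sides, so the choice of $a$ is harmless). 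Your $d_2$ in the second inequality is unaffected, since $s\geq1$ guarantees a $y_1$ to duplicate. With the $s=r$ case of the first inequality patched by constant insertion, the proposal becomes a complete and valid alternative proof.
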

\begin{proof}
Define $\mu_{r,s}:X^{J_{r+1}}\to X^{J_r}$ as the map which forgets the $(r+1-s)$-th path, for any $s\in\{1,\ldots,r\}$. \blue{Explicitly}, the $(r+1)$-tuple $\gamma=(\gamma_1,\ldots,\gamma_{r-s},\gamma_{r+1-s},\gamma_{r+2-s},\ldots,\gamma_{r+1})$ of paths $\gamma_i$ in $X$ \blue{is sent under $\mu_{r,s}$} to the $r$-tuple $\gamma=(\gamma_1,\ldots,\gamma_{r-s},\gamma_{r+2-s},\ldots,\gamma_{r+1})$ of paths. Choose $a\in X$ and \azul{consider the subspace inclusion $\varphi_a\times 1_{Y^s}\colon X^{r-s}\times Y^s\hookrightarrow X^{r+1-s}\times Y^s$, where} $$\varphi_a:X^{r-s}\hookrightarrow X^{r-s+1},~\varphi_a(x_1,\ldots,x_{r-s})=(x_1,\ldots,x_{r-s},a).$$
\azul{If $r=s$, we think of $X^{r-s}$ as the single-point space $\{a\}$, and ignore it in any cartesian product.} Take \azul{an open} cover $U_1,\ldots,U_m$ of $X^{r+1-s}\times Y^s$ such that each $U_i$ has a local section \azul{$\sigma_i\colon U_i\to X^{J_{r+1}}$} of $e_{r+1,s}^f$ for $i=1,\ldots,m$\azul{, and put} $$V_i=\azul{U_i\cap \left(X^{r-s}\times Y^{s}\right).}$$ Then a local section $s_i:\azul{V_i}\to X^{J_{\azul{r}}}$ of $e_{r,s}^f$ \azul{is given by}
\[V_i\stackrel{}{\hookrightarrow} U_i\stackrel{\azul{\sigma_i}}{\longrightarrow} X^{J_{r+1}}\stackrel{\mu_{r,s}}{\longrightarrow} X^{J_r}.\] 
\azul{This yields $\text{TC}_{r,s}(f)\leq\text{TC}_{r+1,s}(f)$.}

\smallskip
On the other hand, \azul{choose an element} $b\in Y$ and \azul{consider the subspace inclusion $1_{X^{r-s}}\times i_b\colon X^{r-s}\times Y^s\hookrightarrow X^{r-s}\times Y^{s+1}$, where}
 $$i_b:Y^{s}\to Y^{s+1},~i_b(z)=(b,z).$$ \azul{As before, take an open cover} $U_1,\ldots,U_m$ of $X^{r-s}\times Y^{s+1}$ such that each $U_i$ has a local section \azul{$\sigma_i\colon U_i\to X^{J_{r+1}}$} of $e_{r+1,s+1}^f$ for $i=1,\ldots,m$\azul{, and put}
 $$
 V_i=\azul{U_i\cap\left(X^{r-s}\times Y^s\right).}
 $$ Then a local section $s_i:\azul{V_i}\to X^{\azul{J_{r}}}$ \azul{of $e_{r,s}^f$ is given by}
 \[
 V_i\hookrightarrow U_i\stackrel{\azul{\sigma_i}}{\longrightarrow} X^{J_{r+1}}\stackrel{\mu_{r,s}}{\longrightarrow} X^{J_r}.
 \] 
\blue{We thus get} $\text{TC}_{r,s}(f)\leq\text{TC}_{r+1,s+1}(f)$.
\end{proof}

\begin{proposition}\label{prop:lower-cat-sec}
For a map $f:X\to Y$, we have $$\text{TC}_{r,s}(f)\geq \left\{
  \begin{array}{ll}
     \max\{\text{sec}\hspace{.1mm}(f^s),\text{ sec}^{\azul{1_{X^{r-s}}\times f^s}}(e_r^X), \text{ cat}(X^{r-s-1}\times Y^s)\}, & \hbox{for $s<r$;}\\
     \max\{\text{sec}(f^r),\text{ sec}^{\azul{f^r}}(e_r^X), \text{ TC}_{r}(Y)\}, & \hbox{for $s=r$.}
    \end{array}
\right.$$
\end{proposition}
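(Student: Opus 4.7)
The plan is to establish the three entries of the maximum separately. The first two entries are common to both cases and follow from the factorization
$$e_{r,s}^f \,=\, \bigl(1_{X^{r-s}}\times f^s\bigr)\circ e_r^X.$$
First I would apply Lemma~\ref{lem:proper-sec-secat}(4) with outer map $g = 1_{X^{r-s}}\times f^s$ and inner map $e_r^X$ to obtain
$$\text{sec}(e_{r,s}^f)\,\geq\,\max\bigl\{\text{sec}(1_{X^{r-s}}\times f^s),\,\text{sec}^{1_{X^{r-s}}\times f^s}(e_r^X)\bigr\},$$
and then invoke Lemma~\ref{lem:proper-sec-secat}(8) to identify $\text{sec}(1_{X^{r-s}}\times f^s) = \text{sec}(f^s)$. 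When $s=r$, the factor $X^{r-s}$ degenerates to a point, so $1_{X^{r-s}}\times f^s$ becomes $f^r$ and the same argument delivers $\text{sec}(f^r)$ and $\text{sec}^{f^r}(e_r^X)$.

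For the third lower bound in the case $s=r$, given any open cover $\{U_i\}_{i=1}^m$ of $Y^r$ with local sections $\sigma_i\colon U_i\to X^{J_r}$ of $e_{r,r}^f = f^r\circ e_r^X$, I would post-compose each $\sigma_i$ with the continuous map $X^{J_r}\to Y^{J_r}$ given by $\gamma\mapsto f\circ\gamma$. The resulting maps satisfy
$$e_r^Y\circ (f\circ\sigma_i)\,=\,f^r\circ e_r^X\circ\sigma_i\,=\,\text{incl}_{U_i},$$
so $\{U_i\}$ also witnesses $\text{TC}_r(Y)\leq m$, proving $\text{TC}_r(Y)\leq \text{TC}_{r,r}(f)$.

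For the third lower bound when $s<r$, I would fix $x_0\in X$, put $B'=\{x_0\}\times X^{r-s-1}\times Y^s$ inside $X^{r-s}\times Y^s$, and let $E'=(e_{r,s}^f)^{-1}(B')$ with $\pi\colon E'\to B'$ the restriction of $e_{r,s}^f$. Restricting any sectional cover of $e_{r,s}^f$ to $B'$ yields a sectional cover of $\pi$ of the same cardinality (local sections automatically land in $E'$), so $\text{sec}(\pi)\leq \text{TC}_{r,s}(f)$. Using $\text{sec}(\pi)\geq \text{secat}(\pi)$, it then suffices to verify that $\text{secat}(\pi) = \text{cat}(X^{r-s-1}\times Y^s)$. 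For this, let $P_{x_0}X\subset PX$ denote the subspace of paths starting at $x_0$, and let $p\colon P_{x_0}X\to X$ be endpoint evaluation. I would then introduce the continuous map
$$\phi\colon E'\to (P_{x_0}X)^{r-1},\qquad (\gamma_1,\ldots,\gamma_r)\mapsto (\overline{\gamma}_1\cdot\gamma_2,\ldots,\overline{\gamma}_1\cdot\gamma_r),$$
which is well defined because $\overline{\gamma}_1$ starts at $\gamma_1(1)=x_0$ and ends at the common basepoint $\gamma_i(0)$. Under the canonical identification $B'\cong X^{r-s-1}\times Y^s$, one checks that $\pi = q\circ\phi$ where $q = p^{r-s-1}\times (f\circ p)^s$. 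Since $(P_{x_0}X)^{r-1}$ is contractible, $\phi$ is null-homotopic, whence so is $\pi=q\circ\phi$, and Lemma~\ref{lem:proper-sec-secat}(6) yields $\text{secat}(\pi) = \text{cat}(X^{r-s-1}\times Y^s)$.

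The main obstacle will be the bookkeeping in the $s<r$ case: verifying the factorization $\pi = q\circ\phi$ on the nose under the identification $B'\cong X^{r-s-1}\times Y^s$, and matching up coordinates so that the $f$-twists appear in the correct factors. Once this is in place, the null-homotopy argument via contractibility of $(P_{x_0}X)^{r-1}$ is routine.
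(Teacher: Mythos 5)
Your proposal is correct and essentially reproduces the paper's argument: the first two terms of the maximum come from items (4) and (8) of Lemma~\ref{lem:proper-sec-secat}, the $s=r$ bound from the commutativity of $e_r^Y\circ f_\# = f^r\circ e_r^X$, and the $s<r$ bound from restricting $e_{r,s}^f$ over the slice $\{x_0\}\times X^{r-s-1}\times Y^s$ and invoking item~(6) of the same lemma. The only cosmetic difference is that you establish null-homotopy of the restricted map via an explicit factorization through the contractible space $(P_{x_0}X)^{r-1}$, whereas the paper simply observes that the corresponding pullback of $e^f_{r,s}$ along $i_a$ has contractible total space.
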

\begin{proof}
\azul{Item} (4) of Lemma~\ref{lem:proper-sec-secat}, \azul{yields}

\vspace{-7mm}
\begin{eqnarray*}
\text{TC}_{r,s}(f) &=& \text{sec}(X^{J_r}\stackrel{e_r^X}{-\!\!\!\longrightarrow} X^r\stackrel{1_{X^{r-s}}\times f^s}{-\!\!\!-\!\!\!-\!\!\!-\!\!\!-\!\!\!\longrightarrow} X^{r-s}\times Y^s)\\
&\geq&\max\{\text{sec}(1_{X^{r-s}}\times f^s),\text{sec}^{(1_{X^{r-s}}\times f^s)}(e_r^X)\}\\
&=& \max\{\text{sec}(f^s),\text{sec}^{(1_{X^{r-s}}\times f^s)}(e_r^X)\}\azul{,}
\end{eqnarray*}  
\azul{where the last equality comes from} item (8) of Lemma~\ref{lem:proper-sec-secat}.

\smallskip
\azul{For $s<r$,} consider the the canonical pullback
\begin{eqnarray*}
\xymatrix{ (i_a)^\ast(e_{r,s}^{f}) \ar[d]_{ } \ar[r]^{ } & X^{J_r} \ar[d]^{\azul{e_{r,s}^{f}}} & \\
       X^{r-s-1}\times Y^s \ar[r]_{i_a}  & X^{r-s}\times Y^s\azul{,}  &}
\end{eqnarray*} 
\azul{where} $i_a:X^{r-s-1}\times Y^s\azul{{}\hookrightarrow{}} X^{r-s}\times Y^s$ \azul{is the subspace} inclusion  given \blue{by} \azul{$i_a(x,y)=(a,x,y)$, for some fixed $a\in X$.} \azul{Since} $(i_a)^\ast(e_{r,s}^{f})$ is contractible, item\azul{s~(1) and~(6)} of Lemma~\ref{lem:proper-sec-secat} \azul{yield} $\text{TC}_{r,s}(f)\geq\text{cat}(X^{r-s-1}\times Y^s)$. On the other hand, \azul{for $s=r$,} \blue{the commutative} diagram
\begin{eqnarray*}
\xymatrix{ X^{J_r}\ar[dr]_{e^f_{r,r}} \ar[rr]^{f_{\#}}  & & Y^{J_r}  \ar[dl]^{e_r^Y} \\ & Y^r & } 
\end{eqnarray*} yields the inequality $\text{TC}_r(Y)\leq\text{TC}_{r,r}(f)$.
\end{proof}

\subsection{\azulito{Rudyak-Soumen higher TC as a generalization of Murillo-Wu's TC}}\label{relationtoRS}
The \emph{\azulito{quasi-strong LS category}} of a map $f\colon X\to(Y,B)$, \azulito{$\mathrm{qscat}(f)$, introduced by Rudyak and Soumen in}~\cite[Definition 2.7]{rudyak2022}, is the least integer $n$ such that $X$ can be covered by $n$ open subsets $\{U_i\}_{i=1}^{n}$ on each of which there is a homotopy $H_i:U_i\times [0,1]\to Y$ satisfying $\left(H_i\right)_0=f|_{U_i}$ and $\left(H_i\right)_1(U_i)\subset B$.

\smallskip
\red{For any commutative diagram \begin{eqnarray}\label{pararepetir}
\xymatrix{ X \ar[r]^{\varphi} \ar[d]_{f} & X' \ar[d]^{h} & \\
       Y  \ar[r]_{g} &  Z, &}
\end{eqnarray}
it is easy to see that
\begin{equation}\label{easytosee}
\text{qscat}(g:Y\to (Z,B))\leq\text{sec}(f)\cdot\text{qscat}(h:X'\to (Z,B)).
\end{equation}
\azulito{Furthermore, if $Z$ is path-connected, then}} \red{$$\text{qscat}(g:Y\to (Z,B))\leq\text{cat}(g:Y\to Z),$$ \azulito{with} equality whenever $B$ is contractible.}
\begin{proposition}\label{propo37}
\azulito{Assume~(\ref{pararepetir}) is a quasi pullback with}
\red{$h:X'\to Z$ a fibration \azulito{admiting} a section over \azulito{a subspace $B$ of $Z$. Then}
$\text{sec}(f)\leq \text{qscat}(g:Y\to (Z,B))$.}
\end{proposition}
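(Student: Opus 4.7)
Let $n=\text{qscat}(g\colon Y\to(Z,B))$ and fix an open cover $\{U_i\}_{i=1}^n$ of $Y$ together with homotopies $H_i\colon U_i\times[0,1]\to Z$ satisfying $(H_i)_0=g|_{U_i}$ and $(H_i)_1(U_i)\subset B$. The plan is to manufacture, for each $i$, a strict local section $s_i\colon U_i\to X$ of $f$, which will yield $\text{sec}(f)\le n$. This will be achieved in three steps: (i) lift the ``far end'' of $H_i$ into $X'$ using the section over $B$, (ii) propagate this lift backward along $H_i$ using the homotopy lifting property of $h$ in order to produce a lift of $g|_{U_i}$ through $h$, and (iii) finally exploit the quasi pullback hypothesis to push the lift down to $X$.

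For step~(i), let $\sigma\colon B\to X'$ denote the given section of $h$ over $B$, so that $h\circ\sigma=\mathrm{incl}_B$. Since $(H_i)_1$ lands in $B$, the composite $\sigma\circ(H_i)_1\colon U_i\to X'$ is a lift of $(H_i)_1$ through $h$. For step~(ii), consider the reversed homotopy $\widetilde{H}_i\colon U_i\times[0,1]\to Z$ defined by $\widetilde{H}_i(u,t)=H_i(u,1-t)$; it starts at $(H_i)_1$ and ends at $g|_{U_i}$. Since $h$ is a fibration, the homotopy lifting property applied to the initial lift $\sigma\circ(H_i)_1$ produces a homotopy $\widetilde{F}_i\colon U_i\times[0,1]\to X'$ with $\widetilde{F}_i(\cdot,0)=\sigma\circ(H_i)_1$ and $h\circ\widetilde{F}_i=\widetilde{H}_i$. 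Setting $\beta_i:=\widetilde{F}_i(\cdot,1)\colon U_i\to X'$, we obtain $h\circ\beta_i=g\circ\mathrm{incl}_{U_i}$.

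For step~(iii), consider the commutative square
\[
\xymatrix{
U_i \ar[r]^{\beta_i}\ar[d]_{\mathrm{incl}_{U_i}} & X' \ar[d]^{h}\\
Y \ar[r]_{g} & Z.
}
\]
The quasi pullback hypothesis on the square in~(\ref{pararepetir}) produces a map $s_i\colon U_i\to X$ with $f\circ s_i=\mathrm{incl}_{U_i}$ (and also $\varphi\circ s_i=\beta_i$, although we only need the first identity). Thus $s_i$ is a strict local section of $f$ over $U_i$, and the cover $\{U_i\}_{i=1}^n$ witnesses $\text{sec}(f)\le n=\text{qscat}(g\colon Y\to(Z,B))$.

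The main obstacle is packaging the qscat data into a map with the right source and codomain to trigger the quasi pullback property. The homotopy $H_i$ does not directly give a lift of $g|_{U_i}$ through $h$; one only has a lift at the deformed endpoint via $\sigma$. Reversing $H_i$ and invoking the fibration condition on $h$ is precisely what converts the ``homotopical'' information (the qscat datum) into the ``strict'' information (a genuine commuting square into $h$) required to apply the quasi pullback property, which in turn is what produces honest local sections rather than mere local homotopy sections.
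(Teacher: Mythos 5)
Your proof is correct and follows essentially the same route as the paper's: lift the endpoint of the qscat homotopy into $X'$ via the section $\sigma$ over $B$, propagate that lift backwards through $h$ using the homotopy lifting property to obtain a strict lift $\beta_i$ of $g|_{U_i}$, and then invoke the quasi pullback property to produce a strict local section of $f$. The only cosmetic difference is that the paper writes the homotopy already reversed (taking $H_0(U)\subset B$ and $H_1=g|_U$ from the start) instead of explicitly introducing $\widetilde H_i$, so the two arguments are the same step for step.
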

\begin{proof}
\red{Let $\sigma:B\to X'$ be a section of $h$ and $H\colon U\times I\to Z$ be a homotopy with $H_1=g_{|\azulito{U}}$ and $H_0(U)\subset B$. \azulito{The outer square in the diagram}
\begin{eqnarray*}
\xymatrix{ U \ar[r]^{\sigma\circ H_0} \ar[d]_{j_0} & X' \ar[d]^{h} & \\
       U\times I\ar@{-->}[ur]_{G}  \ar[r]_{H} &  Z &}
\end{eqnarray*}
\azulito{commutes and, since}
$h$ is a fibration, there is a homotopy $G\colon U\times I\to X'$ \azulito{that renders the complete diagram commutative.} 
Then the commutative diagram
\begin{eqnarray*}
\xymatrix@C=3cm{ U \ar@/^10pt/[drr]^{\,\,G_1} \ar@{^{(}->}@/_10pt/[ddr]_{ }  &   &  &\\ 
& X \ar[r]^{\,\,\varphi} \ar[d]^{f} & X' \ar[d]^{h} & \\
       & Y   \ar[r]_{\,\, g} &  Z &}
\end{eqnarray*} and the quasi pullback hypothesis \azulito{yield} a section $s\colon U\to X$ of $f$.}
\end{proof}

\azulito{Taking into account~(\ref{easytosee}) we then get:}
\begin{corollary}\label{cor:qscat-1}
\red{\azulito{Under the conditions in Proposition~\ref{propo37}},
$\text{sec}(f)= \text{qscat}(g:Y\to (Z,B))$ \azulito{provided} $\text{qscat}(h:X'\to (Z,B))=1$.}
\end{corollary}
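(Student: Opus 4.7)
The plan is to obtain the claimed equality by sandwiching $\operatorname{sec}(f)$ and $\operatorname{qscat}(g\colon Y\to(Z,B))$ between each other, using the two ingredients already established just above the corollary: Proposition~\ref{propo37} and inequality~(\ref{easytosee}).

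First, Proposition~\ref{propo37} directly gives
$$\operatorname{sec}(f)\leq\operatorname{qscat}(g\colon Y\to(Z,B)),$$
since the hypotheses of the corollary (quasi pullback, $h$ a fibration with a section over $B$) are precisely those of the proposition. For the reverse inequality, I would invoke~(\ref{easytosee}), which applies to the commutative square~(\ref{pararepetir}) without any further hypothesis, and specialize it to our setting to obtain
$$\operatorname{qscat}(g\colon Y\to(Z,B))\leq\operatorname{sec}(f)\cdot\operatorname{qscat}(h\colon X'\to(Z,B)).$$
Under the extra assumption $\operatorname{qscat}(h\colon X'\to(Z,B))=1$, the right-hand side collapses to $\operatorname{sec}(f)$, yielding the desired second inequality.

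Combining the two bounds produces the equality $\operatorname{sec}(f)=\operatorname{qscat}(g\colon Y\to(Z,B))$. There is no real obstacle here: the corollary is a formal consequence of pairing the upper bound~(\ref{easytosee}) with the lower bound of Proposition~\ref{propo37}, and the only thing to verify is that the hypotheses of both results coincide with those assumed in the corollary, which is immediate from the statement.
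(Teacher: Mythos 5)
Your proof is correct and coincides with the paper's own argument: the paper's wording ``Taking into account~(\ref{easytosee}) we then get'' signals precisely the combination of the lower bound from Proposition~\ref{propo37} with the upper bound from~(\ref{easytosee}), where the hypothesis $\text{qscat}(h:X'\to (Z,B))=1$ collapses the product to $\text{sec}(f)$. Nothing to add.
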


\red{Corollary \ref{cor:qscat-1} implies \cite[Proposition 9.18, pg. 261]{cornea2003lusternik} as shown in the next example. 
\begin{example}
\azulito{Assume~(\ref{pararepetir}) is a quasi pullback with} $Z$ path-connected and $h:X'\to Z$ a null-homotopic fibration.
\azulito{Fix $z_0\in Z$ and} set $B=\{z_0\}$. Then $\mathrm{qscat}(h:X'\to (Z,B))=\mathrm{cat}(h:X'\to Z)=1$, \azulito{so that} $\mathrm{sec}(f)=\mathrm{qscat}(g:Y\to (Z,B))=\mathrm{cat}(g:Y\to Z)$.
\end{example}
}

\azulito{We next introduce the two central characters in this subsection.}

\smallskip
{\bf (A)} A notion of higher topological complexity of a map \azulito{has been introduced} \azul{in}~\cite{rudyak2022} by Rudyak and Soumen \azul{as follows.} For $r\geq 2$ and a map $f:X\to Y$, the $r$-higher topological complexity of $f$ (\azul{\`a la} Rudyak-Soumen), \azul{which} we denote $\mathrm{TC}^{RS}_r(f)$, is \red{given by $$\mathrm{TC}^{RS}_r(f)=\mathrm{qscat}\left(f^r\colon X^r\to (Y^r,\Delta_r(Y))\right),$$ that is,} the least integer $n$ such that $X^r$ can be covered by $n$ open subsets $\{U_i\}_{i=1}^{n}$ on each of which there is a homotopy $H_i:U_i\times [0,1]\to Y^r$ satisfying $\left(H_i\right)_0=f^r|_{U_i}$ and $\left(H_i\right)_1(U_i)\subseteq\Delta_r(Y)$, where $$\azul{\Delta_r(Y)=\{(y,\ldots,y)\in Y^r\colon y\in Y\}}$$ is the diagonal. \azul{More generally,} for $2\leq s\leq r$, \azul{set} $\Delta_{r,s}(Y)=Y^{r-s}\times \Delta_s(Y)$, \azul{and} define the \textit{$(r,s)$-th quasi-strong \azul{higher} topological complexity} of $f$, denote\azul{d by} $\mathrm{qsTC}_{r,s}(f)$, as the least integer $n$ such that $X^r$ can be covered by $n$ open subsets $\{U_i\}_{i=1}^{n}$ on each of which there is a homotopy $H_i:U_i\times [0,1]\to Y^r$ satisfying $\left(H_i\right)_0=f^r|_{U_i}$ and $\left(H_i\right)_1(U_i)\subseteq\Delta_{r,s}(Y)$\red{, that is, $$\mathrm{qsTC}_{r,s}(f)=\mathrm{qscat}\left(\rule{0mm}{3.9mm}f^r\colon X^r\to (Y^r,\Delta_{r,s}(Y))\right).$$} Note that $\mathrm{qsTC}_{r,r}(f)=\mathrm{TC}^{RS}_r(f)$ \red{and $\mathrm{qsTC}_{r,s'}(f)\leq\mathrm{qsTC}_{r,s}(f)$ for any $2\leq s'\leq s\leq r$.} 

\medskip{\bf (B)} \red{\azulito{Here is} a natural generalization of Murillo and Wu's complexity \azulito{reviewed at the end of Section~\ref{secprelim}.} \azulito{For} a map $f:X\to Y$ and $1\leq s\leq r\geq 2$, consider the \azulito{diagram} 
\[
\xymatrix{
X^{J_r} \ar[d]_{e_r^X} &  \\
        X^r \ar[rr]_{1_{X^{r-s}\times f^{\azulito{s}}}\rule{.8cm}{0mm}}  &  & X^{r-s}\times Y^s.}
\] The $(r,s)$-higher topological complexity of $f$ (\`a la Murillo-Wu), which we denote \azulito{by} $\mathrm{TC}^{MW}_{r,s}(f)$, is given by $$\mathrm{TC}^{MW}_{r,s}(f)=\mathrm{secat}^{1_{X^{r-s}}\times f^s}\left(e_r^X\right),$$ i.e., the least integer $n$ such that $X^r$ can be covered by $n$ open sets $\{U_i\}_{i=1}^{n}$ on each of which there is a map $s_i:U_i\to X^{J_r}$ satisfying $\left(1_{X^{r-s}}\times f^s\right)\circ e_r^{X}\circ s_i\simeq \left(1_{X^{r-s}}\times f^s\right)_{\mid U_i}$. Note that $\mathrm{TC}^{MW}_{r,s}(f)$ coincides with the least integer $n$ such that $X^r$ can be covered by $n$ open sets $\{U_i\}_{i=1}^{n}$ on each of which there is a map $s_i:U_i\to X$ satisfying $$\left(1_{X^{r-s}}\times f^s\right)\circ \Delta_{r}^X\circ s_i\simeq \left(1_{X^{r-s}}\times f^s\right)_{\mid U_i},$$ where $\Delta_{r}^X:X\to X^r,~x\mapsto (x,\ldots,x)$. \azulito{Likewise,} the naive $(r,s)$-higher topological complexity of $f$ (\`a la Murillo-Wu), which we denote by $\mathrm{tc}^{MW}_{r,s}(f)$, is defined analogously, \azulito{now} requiring each of the maps $s_i:U_i\to PX$ to satisfy the stronger condition $\left(1_{X^{r-s}}\times f^s\right)\circ e_r^{X}\circ s_i= \left(1_{X^{r-s}}\times f^s\right)_{\mid U_i}$. In other words, $$\mathrm{tc}^{MW}_{r,s}(f)=\mathrm{sec}^{1_{X^{r-s}}\times f^s}(e_r^{X}).$$ Note that the inequality $\mathrm{TC}^{MW}_{r,s}(f)\leq \mathrm{tc}^{MW}_{r,s}(f)$ holds for any map $f$, while in fact $\mathrm{TC}^{MW}_{r,s}(f)=\mathrm{tc}^{MW}_{r,s}(f)$ when $f$ is a fibration. We will write $\mathrm{TC}^{MW}_{r}(f)=\mathrm{TC}^{MW}_{r,r}(f)$ and $\mathrm{tc}^{MW}_{r}(f)=\mathrm{tc}^{MW}_{r,r}(f)$. \azulito{Of course} $$\mathrm{TC}^{MW}_{2}(f)=\mathrm{TC}^{MW}(f),$$ \azulito{the} Murillo-Wu's complexity.} 

\medskip
\red{The following statement generalizes \cite[Theorem 3.4]{scott2022} \azulito{and solves on the positive the question raised in~\cite{rudyak2022} by Rudyak-Soumen regarding their inequality~(3.6).}}
\begin{proposition}\label{rs-wm-s}
 \red{For $r\geq 2$ and a map $f:X\to Y$, we have}
 \[\red{\mathrm{TC}^{RS}_{r}(f)=\mathrm{TC}^{MW}_{r}(f)=\mathrm{sec}_{f^r}(e_r^Y).}\]
\end{proposition}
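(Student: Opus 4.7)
The plan is to establish the chain of inequalities
$$\mathrm{sec}_{f^r}(e_r^Y) \leq \mathrm{TC}^{RS}_r(f) \leq \mathrm{TC}^{MW}_r(f) \leq \mathrm{sec}_{f^r}(e_r^Y),$$
which yields both equalities in the statement at once. All three inequalities will be proved by explicit constructions at the level of local sections/homotopies, using the natural operations of path-reversal and concatenation in $Y^I$.

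For $\mathrm{sec}_{f^r}(e_r^Y) \leq \mathrm{TC}^{RS}_r(f)$, I would start from qscat data, namely an open cover $\{U_i\}_{i=1}^n$ of $X^r$ together with homotopies $H_i = (H_i^{(1)}, \ldots, H_i^{(r)}) : U_i \times I \to Y^r$ satisfying $(H_i)_0 = f^r|_{U_i}$ and $(H_i)_1(U_i) \subseteq \Delta_r(Y)$. The diagonal condition at $t = 1$ guarantees that, for every $u \in U_i$, the $r$ reversed paths $t \mapsto H_i^{(j)}(u, 1-t)$ share a common starting point and terminate at $f(\pi_j(u))$. This packages into a map $\sigma_i : U_i \to Y^{J_r}$ with $e_r^Y \circ \sigma_i = f^r|_{U_i}$, i.e., a local section of the pullback fibration $(f^r)^{\ast}(e_r^Y) : X^r \times_{Y^r} Y^{J_r} \to X^r$, as desired.

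For $\mathrm{TC}^{RS}_r(f) \leq \mathrm{TC}^{MW}_r(f)$, I would pick $s_i : U_i \to X^{J_r}$ with $f^r \circ e_r^X \circ s_i \simeq f^r|_{U_i}$ and consider the composite $f_{\#} \circ s_i : U_i \to Y^{J_r}$. For each $u$, this is a multi-path in $Y$ whose $r$ components share a common starting point and terminate at $f^r(e_r^X(s_i(u)))$; reversing each component in time produces a homotopy from $f^r \circ e_r^X \circ s_i$ to a map landing in $\Delta_r(Y)$, and concatenating with the given homotopy from $f^r|_{U_i}$ yields the qscat homotopy required. For the remaining inequality $\mathrm{TC}^{MW}_r(f) \leq \mathrm{sec}_{f^r}(e_r^Y)$, I would start with $\sigma_i : U_i \to Y^{J_r}$ satisfying $e_r^Y \circ \sigma_i = f^r|_{U_i}$, write $\sigma_i(u) = (\gamma_1(u), \ldots, \gamma_r(u))$ with common basepoint $a(u) := \gamma_j(u)(0)$ and endpoints $\gamma_j(u)(1) = f(\pi_j(u))$, and define $s_i(u)$ to be the constant multi-path at $\pi_1(u) \in X$, so that $f^r \circ e_r^X \circ s_i(u) = (f\pi_1(u), \ldots, f\pi_1(u))$. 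The needed homotopy $f^r \circ e_r^X \circ s_i \simeq f^r|_{U_i}$ is then realized component-wise by the concatenation $\overline{\gamma_1(u)} \cdot \gamma_j(u)$, a path from $f\pi_1(u)$ through $a(u)$ to $f\pi_j(u)$.

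The only delicate point will be confirming that the reversal and concatenation constructions are continuous as maps into $Y^{J_r}$ and $(Y^r)^I$; this is routine in the compact-open topology, but requires some care with the basepoint of the wedge $J_r$ and the standard reparametrizations needed to turn concatenations into genuine paths on $[0,1]$. No substantive obstacle is anticipated beyond this bookkeeping.
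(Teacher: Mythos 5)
Your proof is correct and takes essentially the same approach as the paper: both arguments close a cycle of three inequalities among $\mathrm{TC}^{RS}_r(f)$, $\mathrm{TC}^{MW}_r(f)$ and $\mathrm{sec}_{f^r}(e_r^Y)$ via explicit path-level constructions (reversal, concatenation, constant multipaths). The only difference is that you traverse the cycle in the opposite direction, proving each $\leq$ where the paper proves the corresponding $\geq$; the explicit data you build at each step are the natural inverses of the paper's constructions, and both routes are equally valid.
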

\begin{proof}
 \red{For $U\subset X^r$ and $\sigma:U\to Y^{J_r}$ satisfying $e_r^Y\circ \sigma=(f^r)_{\mid U}$, consider the homotopy $H:U\azulito{{}\times[0,1]}\to Y^r$ given by $$H(x,t)=\left(\overline{\sigma_1(x)}\cdot\sigma_r(x)(t),\ldots,\overline{\sigma_{r-1}(x)}\cdot\sigma_r(x)(t),f(x_r)\right).$$ Here, for each $x=(x_1,\ldots,x_r)\in U$, $\sigma(x)=\left(\sigma_1(x),\ldots,\sigma_r(x)\right)$ is an ordered $r$-\azulito{multipath} in~$Y$ \azulito{---see~(\ref{evaluation-fibration})}. Recall that \azulito{$\overline{\alpha}(t)=\alpha(1-t)$ is the path $\alpha$ traversed in opposite direction, and that $\alpha\cdot\beta$ stands for the concatenation of $\alpha$ and $\beta$.} Note that $H_0=(f^r)_{\mid U}$ and $H_1(U)\subset\Delta_r(Y)$\azulito{. This yields} $\mathrm{sec}_{f^r}(e_r^Y)\geq \mathrm{TC}^{RS}_{r}(f)$.} 

\smallskip\red{We \azulito{next argue} the inequality $\mathrm{TC}^{RS}_{r}(f)\geq\mathrm{TC}^{MW}_{r}(f)$. For $U\subset X^r$ and a homotopy $H:U\times [0,1]\to Y^r$ satisfying $H_0=(f^r)_{\mid U}$ and $H_1(U)\subset\Delta_r(Y)$, \azulito{set} $$\alpha_j(x)(t):=p_j(H(x,t))$$ for each $j=1,\ldots,r$, $x\in U$ and $t\in [0,1]$, where $p_j:Y^r\to Y$ is the projection to the $j$-th coordinate. \azul{Then} the homotopy $G:U\times [0,1]\to Y^r$ given by $$G(x,t)=\left(\alpha_1(x)\cdot\overline{\alpha_1(x)}(t),\alpha_2(x)\cdot\overline{\alpha_1(x)}(t),\ldots,\alpha_r(x)\cdot\overline{\alpha_1(x)}(t)\right)$$ satisfies $\azulito{G}_0=(f^r)_{\mid U}$ and $\azulito{G}_1=f^r\circ\Delta_r^X\circ\pi_1$, where $\pi_1(x_1,\ldots,x_r)=x_1$. \azulito{This yields the asserted inequality.}}

\smallskip\red{\azulito{We complete the proof by showing} the inequality $\mathrm{TC}^{MW}_{r}(f)\geq\mathrm{sec}_{f^r}(e_r^Y)$. For $U\subset X^r$ and $\azulito{s}:U\to X$ satisfying $f^r\circ\Delta_r^X\circ s\simeq (f^r)_{\mid U}$, consider the commutative diagram
    \begin{eqnarray*}
\xymatrix{ X \ar[r]^{c_f} \ar[d]_{\Delta_r^X} & Y^{J_r} \ar[d]^{e_r^Y} & \\
       X^r \ar[r]_{f^r} &  Y^r, &}
\end{eqnarray*} where $c_f:X\to Y^{J_r}$ is given \azulito{so that} $c_f(x)=\overline{f(x)}$, the constant map. \azulito{Then} the map $\sigma:U\to Y^{J_r}$ given by $\sigma=c_f\circ s$ defines a \azulito{homotopy} lift of $(f^r)_{\mid U}$ through 
$e_r^Y$ \azulito{The result follows since $e^Y_r$ is a fibration}.}
\end{proof}

\subsection{\azulito{The $\TC_{r,s}$ input}}\label{input}
\azulito{We start by comparing the generalized Murillo-Rudyak-Soumen-Wu} \red{complexity $\mathrm{sec}_{f^r}(e_r^Y)$ \azulito{to} $\mathrm{HTC}_{r,r}(f)$.}

\begin{proposition}\label{s-hth}\red{
    For $r\geq 2$ and a map $f:X\to Y$, we have:
    \begin{enumerate}
        \item $\mathrm{sec}_{f^r}(e_r^Y)\leq\mathrm{HTC}_{r,r}(f)\leq\mathrm{TC}_{r,r}(f)$.
        \item If $f$ admits a section, then $\mathrm{sec}_{f^r}(e_r^Y)=\mathrm{HTC}_{r,r}(f)=\mathrm{TC}_{r,r}(f).$
    \end{enumerate}}
\end{proposition}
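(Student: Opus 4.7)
The inequality $\mathrm{HTC}_{r,r}(f)\leq\mathrm{TC}_{r,r}(f)$ in (1) is immediate from the general fact $\mathrm{secat}(h)\leq\mathrm{sec}(h)$ applied to $h=e_{r,r}^f$. So the content lies in the other inequality in (1) and in the upper bound on $\mathrm{TC}_{r,r}(f)$ assumed in (2).

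For the inequality $\mathrm{sec}_{f^r}(e_r^Y)\leq\mathrm{HTC}_{r,r}(f)$, the plan is to pull back a homotopy-sectional cover of $Y^r$ to one on $X^r$ via $f^r$, and then exploit that $e_r^Y$ is a Hurewicz fibration. Explicitly, suppose $\{U_i\}_{i=1}^n$ is an open cover of $Y^r$ with maps $s_i\colon U_i\to X^{J_r}$ satisfying $f^r\circ e_r^X\circ s_i\simeq\mathrm{incl}_{U_i}$, and set $V_i=(f^r)^{-1}(U_i)\subset X^r$, an open cover of $X^r$. The composite $\widetilde{\sigma}_i:=f_\#\circ s_i\circ f^r|_{V_i}\colon V_i\to Y^{J_r}$ satisfies
\[
e_r^Y\circ\widetilde{\sigma}_i
=f^r\circ e_r^X\circ s_i\circ f^r|_{V_i}
\simeq f^r|_{V_i},
\]
using the identity $e_r^Y\circ f_\#=f^r\circ e_r^X$ that relates the two evaluation fibrations. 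Lifting the resulting homotopy along the fibration $e_r^Y$, starting at $\widetilde{\sigma}_i$, and taking terminal values yields maps $\sigma_i\colon V_i\to Y^{J_r}$ with $e_r^Y\circ\sigma_i=f^r|_{V_i}$, which is precisely a strict local section of the pullback defining $\mathrm{sec}_{f^r}(e_r^Y)$.

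For (2), assume $f$ has a section $\tau\colon Y\to X$, so $\tau^r$ is a section of $f^r$. Given a cover $\{V_i\}$ of $X^r$ with $\sigma_i\colon V_i\to Y^{J_r}$ satisfying $e_r^Y\circ\sigma_i=f^r|_{V_i}$, the open sets $U_i:=(\tau^r)^{-1}(V_i)$ cover $Y^r$ (any $y\in Y^r$ is sent by $\tau^r$ into some $V_i$). Define $s_i\colon U_i\to X^{J_r}$ by $s_i=\tau_\#\circ\sigma_i\circ\tau^r|_{U_i}$, where $\tau_\#(\gamma)=\tau\circ\gamma$. Then for $y\in U_i$, using $e_r^X\circ\tau_\#=\tau^r\circ e_r^Y$ and $f\circ\tau=1_Y$,
\[
f^r\circ e_r^X\circ s_i(y)
=f^r\circ\tau^r\circ e_r^Y\circ\sigma_i(\tau^r(y))
=f^r\circ\tau^r\circ f^r(\tau^r(y))
=y,
\]
so $s_i$ is a strict local section of $e_{r,r}^f$ over $U_i$. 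This yields $\mathrm{TC}_{r,r}(f)\leq\mathrm{sec}_{f^r}(e_r^Y)$, which together with (1) gives the chain of equalities.

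The only delicate point is keeping track of the compositions $e_r^X$, $e_r^Y$, $f_\#$, $\tau_\#$, and the two different spaces being covered ($Y^r$ for $\mathrm{HTC}_{r,r}$ and $\mathrm{TC}_{r,r}$, but $X^r$ for $\mathrm{sec}_{f^r}(e_r^Y)$); the homotopy-lifting step for (1) and the bookkeeping of endpoints in (2) are otherwise routine.
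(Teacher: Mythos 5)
Your proof is correct and takes essentially the same approach as the paper's: pull back a homotopy-sectional cover of $Y^r$ along $f^r$ and upgrade to a strict lift through the fibration $e_r^Y$ for (1), then push forward along a section $\tau$ (conjugating by $\tau_\#$ and $\tau^r$) for (2). The only cosmetic difference is that you carry out the homotopy-lifting step explicitly, whereas the paper relies on the remark that relative sectional numbers of a fibration can be computed with homotopy lifts.
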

\begin{proof}
\red{(1) \azulito{Choose} $U\subset Y^r$ and $s:U\to X^{J_r}$ satisfying $\azulito{f^r}\circ e_r^X\circ s\simeq incl_{U}$, \azulito{and} consider $V=(f^r)^{-1}(U)\subset X^r$\azulito{. Then} the map $\sigma:V\to Y^{J_r}$ given by $\sigma=f_{\#}\circ s\circ (f^r)_{\mid V}$ defines a \azulito{homotopy} lift of $(f^r)_{\mid V}$ through $e_r^Y$. \azulito{This yields} the inequality $\mathrm{sec}_{f^r}(e_r^Y)\leq\mathrm{HTC}_{r,r}(f)$\azulito{, so the proof is complete in view of item (2) in Remark~\ref{rem:fibration-fibration}.}}

\smallskip
\red{(2) \azulito{It suffices to show the inequality} $\mathrm{TC}_{r,r}(f)\leq\mathrm{sec}_{f^r}(e_r^Y)$ \azulito{assuming} that $s:Y\to X$ is a section of $f$. Let $U$ be an open subset of $X^r$, $\sigma:U\to Y^{J_r}$ \azulito{be} a lifting of $(f^r)_{\mid U}$ through~$e_r^Y$, and consider $V=(s^r)^{-1}(U)\subset Y^r$. \azulito{Then} the map $\rho:V\to X^{J_r}$ given by $\rho=s_{\#}\circ \sigma\circ(s^r)_{\mid U}$ defines a local section of $e_{r,r}^f=(f^r)\circ e_r^X$\azulito{, which yields the asserted inequality.}}
\end{proof}

\red{Proposition\azulito{s}~\ref{rs-wm-s} \azulito{and}~\ref{s-hth} \azulito{immediately yield:}}
\begin{corollary}\label{cor:all-tc}
\red{For $r\geq 2$ and a map $f:X\to Y$ which admits a section, we have}
\[\red{\mathrm{TC}^{RS}_{r}(f)=\mathrm{TC}^{MW}_{r}(f)=\mathrm{sec}_{f^r}(e_r^Y)=\mathrm{HTC}_{r,r}(f)=\mathrm{TC}_{r,r}(f).}\]
\end{corollary}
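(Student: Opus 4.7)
The plan is to deduce the full chain of equalities by concatenating the two propositions that immediately precede the statement. First, I would invoke Proposition~\ref{rs-wm-s}, which holds for any map $f\colon X\to Y$ (no sectionality hypothesis is required there) and which supplies the first two equalities
\[
\mathrm{TC}^{RS}_{r}(f)=\mathrm{TC}^{MW}_{r}(f)=\mathrm{sec}_{f^r}(e_r^Y).
\]
This identifies the three Rudyak--Soumen / Murillo--Wu type invariants with the relative sectional number of the evaluation fibration on $Y$.

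Next, I would invoke Proposition~\ref{s-hth}(2), which is precisely the place where the standing hypothesis of the corollary (that $f$ admits a section) is used. That item gives the complementary string of equalities
\[
\mathrm{sec}_{f^r}(e_r^Y)=\mathrm{HTC}_{r,r}(f)=\mathrm{TC}_{r,r}(f),
\]
so pasting the two chains together along their common term $\mathrm{sec}_{f^r}(e_r^Y)$ immediately yields the statement.

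There is no genuine obstacle here; the only bookkeeping point worth flagging is simply that the hypothesis of the corollary is exactly what powers the second half of the chain, while the first half is free of hypotheses. Consequently the proof reduces to a single line citing Propositions~\ref{rs-wm-s} and~\ref{s-hth}(2).
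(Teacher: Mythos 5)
Your proof is correct and is precisely the paper's argument: the text deduces this corollary immediately from Proposition~\ref{rs-wm-s} (valid for any map) and Proposition~\ref{s-hth}(2) (where the section hypothesis enters), spliced along the common term $\mathrm{sec}_{f^r}(e_r^Y)$.
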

 
\azulito{Next we establish general estimates involving our \azulito{$\TC_{r,s}(-)$} and Rudyak-Soumen's $\mathrm{qsTC}_{r,s}(-)$.}

\begin{proposition}\label{rudyak}
 For any $2\leq s\leq r$ and any commutative diagram \begin{eqnarray*}
\xymatrix{ X \ar[r]^{\azul{\varphi}} \ar[d]_{f} & \azul{W} \ar[d]^{h} & \\
       Y  \ar[r]_{g} &  Z &}
\end{eqnarray*} we have \begin{enumerate}
    \item $\azul{\mathrm{HTC}_{r,s}(f)\cdot\mathrm{secat}\left(f^{r-s}\right)\cdot\mathrm{qsTC}_{r,s}(h)}\geq \mathrm{qsTC}_{r,s}(g)$.  
    \item $\mathrm{HTC}_{r,s}(f)\cdot\mathrm{TC}^{RS}_{s}(h)\geq\mathrm{secat}(f^s)\cdot\mathrm{TC}^{RS}_{s}(h)\geq \mathrm{TC}^{RS}_{s}(g)$.
\end{enumerate} In particular, $\mathrm{TC}_{r,r}(f)\cdot\mathrm{TC}^{RS}_{r}(h)\geq\mathrm{HTC}_{r,r}(f)\cdot\mathrm{TC}^{RS}_{r}(h)\geq \mathrm{TC}^{RS}_{r}(g).$
\end{proposition}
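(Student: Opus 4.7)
The plan is to establish both inequalities through a single ``product of coverings'' construction: I would combine open covers witnessing the invariants on each factor of the right-hand side and glue the resulting data by concatenation of homotopies.

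For~(1), I would set $m=\mathrm{HTC}_{r,s}(f)$, $k=\mathrm{secat}(f^{r-s})$, $n=\mathrm{qsTC}_{r,s}(h)$, and fix three coverings: an open cover $\{V_i\}_{i=1}^{k}$ of $Y^{r-s}$ with homotopy sections $\tau_i\colon V_i\to X^{r-s}$ of $f^{r-s}$; an open cover $\{U_j\}_{j=1}^{m}$ of $X^{r-s}\times Y^s$ with homotopy sections $\sigma_j\colon U_j\to X^{J_r}$ of $e_{r,s}^f$; and an open cover $\{T_l\}_{l=1}^{n}$ of $W^r$ with quasi-strong homotopies $K_l\colon T_l\times I\to Z^r$ from $h^r|_{T_l}$ into $\Delta_{r,s}(Z)$. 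I would then put $W_{ij}=(\tau_i\times 1_{Y^s})^{-1}(U_j)\subseteq V_i\times Y^s$ and define
\[
\psi_{ij}=\varphi^r\circ e_r^X\circ\sigma_j\circ(\tau_i\times 1_{Y^s})|_{W_{ij}}\colon W_{ij}\to W^r.
\]
The crucial relation $h^r\circ\psi_{ij}\simeq g^r|_{W_{ij}}$ would be obtained by composing the homotopy $(1_{X^{r-s}}\times f^s)\circ e_r^X\circ\sigma_j\simeq\mathrm{incl}_{U_j}$ with $f^{r-s}\times 1_{Y^s}$, invoking $f^{r-s}\circ\tau_i\simeq\mathrm{incl}_{V_i}$ on the right-hand side, and finally using $h\circ\varphi=g\circ f$ coordinatewise. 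Refining via $W_{ijl}=\psi_{ij}^{-1}(T_l)$ gives an open cover of $Y^r$ of cardinality at most $mkn$, and on each $W_{ijl}$ the concatenation of this homotopy with $K_l\circ(\psi_{ij}\times 1_I)$ furnishes a quasi-strong homotopy from $g^r|_{W_{ijl}}$ into $\Delta_{r,s}(Z)$, yielding $\mathrm{qsTC}_{r,s}(g)\leq mkn$.

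For~(2), I would first record the inequality $\mathrm{HTC}_{r,s}(f)\geq\mathrm{secat}(f^s)$: if $\sigma_j\colon U_j\to X^{J_r}$ is a local homotopy section of $e_{r,s}^f=(1_{X^{r-s}}\times f^s)\circ e_r^X$, then $e_r^X\circ\sigma_j$ is a local homotopy section of $1_{X^{r-s}}\times f^s$, and Lemma~\ref{lem:proper-sec-secat}(8) identifies $\mathrm{secat}(1_{X^{r-s}}\times f^s)$ with $\mathrm{secat}(f^s)$. The remaining inequality $\mathrm{secat}(f^s)\cdot\mathrm{TC}^{RS}_s(h)\geq\mathrm{TC}^{RS}_s(g)$ would be proved by repeating the construction of~(1) in the simpler setting with no $\tau_i$-$U_j$ stratification and ambient space $Y^s$: take an open cover $\{A_i\}$ of $Y^s$ with homotopy sections $\tau_i\colon A_i\to X^s$ of $f^s$, an open cover $\{B_j\}$ of $W^s$ with quasi-strong homotopies $K_j$ from $h^s|_{B_j}$ into $\Delta_s(Z)$, and set $\psi_i=\varphi^s\circ\tau_i$, $A_{ij}=\psi_i^{-1}(B_j)$; the identity $g^s\circ f^s=h^s\circ\varphi^s$ then gives $h^s\circ\psi_i\simeq g^s|_{A_i}$, which concatenates with $K_j\circ(\psi_i\times 1_I)$ to produce the required homotopy from $g^s|_{A_{ij}}$ into $\Delta_s(Z)$. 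The ``In particular'' clause is immediate from the case $s=r$ of~(2) together with the inequality $\mathrm{HTC}_{r,r}(f)\leq\mathrm{TC}_{r,r}(f)$ recorded in Remark~\ref{rem:fibration-fibration}(2).

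The only step requiring any attention is the derivation of $h^r\circ\psi_{ij}\simeq g^r|_{W_{ij}}$ in~(1): the bookkeeping between the $(r-s)$-block and the $s$-block of coordinates when juggling $e_{r,s}^f=(1_{X^{r-s}}\times f^s)\circ e_r^X$, the auxiliary map $f^{r-s}\times 1_{Y^s}$, and the relation $h\circ\varphi=g\circ f$ is mildly tedious but purely formal, so no deeper obstruction is anticipated.
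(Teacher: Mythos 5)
Your proposal is correct and follows essentially the same route as the paper's proof: pull back a quasi-strong homotopy for $h$ through $\varphi^r\circ e_r^X\circ\sigma\circ(\text{section of }f^{r-s}\text{-type map})$, verify $h^r\circ\psi\simeq g^r$ using $h\circ\varphi=g\circ f$ and the two homotopy-section hypotheses, and concatenate homotopies; the paper's parts (iv)--(v) and diagram for item~(2) are likewise identical in spirit to your simplified version. The only cosmetic deviation is that you build the $\mathrm{secat}(f^{r-s})$ datum from covers of $Y^{r-s}$ and cylinder sets $V_i\times Y^s$, whereas the paper takes covers of $Y^r$ adapted directly to $f^{r-s}\times 1_{Y^s}$ (invoking $\mathrm{secat}(f^{r-s})=\mathrm{secat}(f^{r-s}\times 1_{Y^s})$), but these are interchangeable.
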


\begin{proof}
\azul{For (1), consider open sets $U$, $A$ and $V$, and maps $\sigma$, $\rho$ and $H$ satisfying}
\begin{itemize}
            \item[(i)] $U\subset X^{r-s}\times Y^s$, $\sigma:U\to X^{J_r}$ with $e_{r,s}^f\circ \sigma\simeq incl_U$;
            \item[(ii)] $A\subset Y^r$, $\rho:A\to X^{r-s}\times Y^s$ with $\left(f^{r-s}\times 1_{Y^s}\right)\circ\rho\simeq incl_A$;
            \item[(iii)] $V\subset W^r$, $H:V\times [0,1]\to Z^r$ with $H_0=h^r|_V$ and $H_1(V)\subset \Delta_{r,s}(Z)$.
        \end{itemize} \azul{(In (ii) we are using the equality $\text{sec}(f^{r-s})=\text{sec}(f^{r-s}\times 1_{Y^s})$ coming from item (8) of Lemma~\ref{lem:proper-sec-secat}.)} \azul{Consider also} the diagram
\begin{eqnarray*}
\xymatrix@C=1cm{ & & &\widetilde{V}\ar@/^10pt/[rrrrd]^{}\ar@{^{(}->}[d]_{ } &   &  & &\\ 
& & & X^{J_r}\ar[dd]|-{\rule{0mm}{4mm}{e_{r,s}^f}_{ \rule{0mm}{2.5mm}}}\ar[dr]^{e_r^X}&  &  &  & V\ar@{_{(}->}[dl]_{ }\ar[ddl]^{H_0}\ar[ddd]^{H_1} \\
& & & & X^r \ar[dl]|-{\azul{1}\times f^s\rule{0mm}{2.6mm}}\ar[rr]^{\varphi^r}\ar@/^5pt/[dddl]|-{\rule{1mm}{0mm}\rule{0mm}{2.7mm}f^r_{\rule{0mm}{2mm}}} &  & W^r\ar[d]_{h^r} & \\
& &  & X^{r-s}\times Y^s\ar[dd]|-{\rule{0mm}{3mm}f^{r-s}\times \azul{1}} &  &  &  Z^r& \\
& & U\ \ar@{^{(}->}[ur]\ar@/^10pt/[ruuu]^{\sigma} & &  &  &  & \hspace{2mm} \Delta_{r,s}(Z)\ar@{^{(}->}[lu]_{ }\\
& & & Y^r\ar[rrruu]|-{\rule{1mm}{0mm}g^r\rule{.5mm}{0mm}}&  &  & &  \\
& & A\ar@{^{(}->}[ur]\ar@/^10pt/[ruuu]^{\rho} & &  &  & & \\
\azul{\widehat{A}\hspace{1mm}} \ar@{^{(}->}[r]_{ }\ar@/^20pt/[rrruuuuuuu]^{ } 
& \widetilde{A\azul{,}}\hspace{1mm}\ar@{^{(}->}[ur]_{ } \ar@/^10pt/[ruuu]^{\rho_| }& & &  &  & & }
\end{eqnarray*} where $\widetilde{A}=\rho^{-1}(U)$, $\widetilde{V}=\left(\varphi^r\circ e_r^X\right)^{-1}(V)$ and $\azul{\widehat{A}}=\left(\sigma\circ\rho_|\right)^{-1}(\widetilde{V})$. \azul{All regions of the diagram are strictly commutative, except for the three homotopy commutative triangles involving the homotopies in (i), (ii) and (iii).} Note that the sets $\widetilde{A}$, $\widetilde{V}$ \azul{and} $\azul{\widehat{A}}$ can be empty but, when $\azul{\widehat{A}}\neq\varnothing$, we can take the homotopy $G:\azul{\widehat{A}}\times [0,1]\to Z^r$ given by \[G(y,t)=H\left(\varphi^r\circ e_r^X\circ\sigma\circ\rho(y),t\right).\] Then $G_0\simeq g^r|_{\azul{\widehat{A}}}$ and $G_1(\azul{\widehat{A}})\subset \Delta_{r,s}(Z)$. \azul{The asserted inequality~(1) then follows by observing that, as the sets $U$, $A$ and $V$ vary over suitable coverings, the resulting sets $\widehat{A}$ cover $Y^r$.}  

\medskip \azul{Regarding (2), the inequality $\text{HTC}_{r,s}(f)\geq\text{secat}(f^s)$ is obvious, so we focus on the second inequality of (2). Consider open sets $A$ and $V$ and maps $\rho$ and $H$ satisfying:} \begin{itemize}
            \item[(iv)] $A\subset Y^s$, $\rho:A\to X^{s}$ with $f^{s}\circ\rho\simeq incl_A$;
            \item[(v)] $V\subset W^s$, $H:V\times [0,1]\to Z^s$ with $H_0=h^s|_V$ and $H_1(V)\subset \Delta_{s}(Z)$.
        \end{itemize} \azul{Consider also} the diagram
\begin{eqnarray*}
\xymatrix@C=1cm{  & &\widetilde{V}\ar@{^{(}->}[d]_{ }\ar[rrr]_{ } &   &  & V\ar@{_{(}->}[dl]_{ }\ar[ddl]^{H_0}\ar[ddd]^{H_1}\\ 
 & & X^s\ar[d]^{f^s}\ar[rr]^{\varphi^s}&  & W^s\ar[d]_{h^s} &   \\
  & & Y^s \ar[rr]^{g^s} &  & Z^s & \\ \azul{\widehat{A}\hspace{1mm}}\ar@{^{(}->}[r]_{ }\ar@/^20pt/[rruuu]^{ }
 & \azul{A}\ar@{^{(}->}[ur]\ar@/^10pt/[ruu]^{\azul{\rho}} & &  &  & \Delta_{s}(Z)\ar@{^{(}->}[lu]_{ }}
\end{eqnarray*} where $\widetilde{V}=\left(\varphi^s\right)^{-1}(V)$ and $\azul{\widehat{A}}=\azul{\rho}^{-1}(\widetilde{V})$. \azul{All regions of the diagram are strictly commutative, except for the two homotopy commutative triangles involving the homotopies in (iv) and (v).} Note that the sets $\widetilde{V}$ \azul{and $\widehat{A}$} can be empty but, when $\azul{\widehat{A}}\neq\varnothing$, we can take the homotopy $G:\azul{\widehat{A}}\times [0,1]\to Z^s$ given by \[G(y,t)=H\left(\varphi^s\circ\azul{\rho}(y),t\right).\] Then $G_0\simeq g^s|_{\azul{\widehat{A}}}$ and $G_1(\hat{A})\subset \Delta_{s}(Z)$. \azul{The second inequality in (2) now follows by observing that, as the sets $A$ and $V$ vary over suitable coverings, the resulting sets $\widehat{A}$ cover $Y^s$.}
\end{proof}

\subsection{Products}

The following result was proved in \cite[Proposition~22, pg. 84]{schwarz1966}. It will be used in the proof of Proposition~\ref{tc-seq-product}. Here we agree that a normal space is, by definition, required to be Hausdorff.

\begin{lemma}\label{sec-product}
Let $f\times f^\prime:X\times X^\prime\to Y\times Y^\prime$ be the product of two maps $f:X\to Y$ and $f^\prime:X^\prime\to Y^\prime$. If $Y\times Y^\prime$ is normal, then $$\text{sec}(f\times f^\prime)\leq\text{sec}(f)+\text{sec}(f^\prime)-1.$$
\end{lemma}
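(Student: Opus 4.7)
\medskip
\noindent\textbf{Proof plan.} The plan is to reduce this statement directly to item~(7) of Lemma~\ref{lem:proper-sec-secat} (Schwarz's ``overlap'' lemma), which already embodies the essential combinatorial mechanism for bounding $\text{sec}$ via two open coverings. The normality hypothesis on $Y\times Y'$ is precisely what is needed in order to apply that lemma.

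\medskip
Concretely, I would set $k=\text{sec}(f)$ and $k'=\text{sec}(f')$ and pick open coverings $\{U_i\}_{i=1}^{k}$ of $Y$ and $\{U'_j\}_{j=1}^{k'}$ of $Y'$, together with local sections $\sigma_i\colon U_i\to X$ of $f$ and $\sigma'_j\colon U'_j\to X'$ of $f'$. The first key observation is that the ``slab'' families
\[
C_i\,=\,U_i\times Y'\quad(i=1,\ldots,k),\qquad D_j\,=\,Y\times U'_j\quad(j=1,\ldots,k'),
\]
are themselves open coverings of $Y\times Y'$, since $\{U_i\}$ and $\{U'_j\}$ cover $Y$ and $Y'$ respectively. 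The second key observation is that $C_i\cap D_j=U_i\times U'_j$, and on this intersection the product map
\[
\sigma_i\times\sigma'_j\colon U_i\times U'_j\longrightarrow X\times X'
\]
is a genuine local section of $f\times f'$, as $(f\times f')\circ(\sigma_i\times\sigma'_j)=(f\circ\sigma_i)\times(f'\circ\sigma'_j)=incl_{U_i\times U'_j}$.

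\medskip
With these two families in hand and with the normality of $Y\times Y'$ at our disposal, I would invoke item~(7) of Lemma~\ref{lem:proper-sec-secat} applied to the map $f\times f'\colon X\times X'\to Y\times Y'$ with the coverings $\{C_i\}$ and $\{D_j\}$. The lemma then yields the desired bound $\text{sec}(f\times f')\leq k+k'-1$. No further work is needed.

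\medskip
There is no real ``hard step'' here: once one sees that the slab coverings $\{C_i\}$ and $\{D_j\}$ are the correct ingredients to feed into the Schwarz lemma, everything is automatic. The only subtlety worth flagging is that the entire argument hinges on the normality hypothesis, which is consumed exactly when invoking item~(7); without it the best one can immediately say is the trivial product bound $\text{sec}(f\times f')\leq k\cdot k'$.
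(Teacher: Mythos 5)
Your proof is correct, and it is exactly the classical argument. The paper itself does not supply a proof of this lemma --- it defers to Schwarz's Proposition~22 --- and that proposition is obtained in Schwarz precisely by the device you use: take the two slab coverings $\{U_i\times Y'\}$ and $\{Y\times U'_j\}$ of $Y\times Y'$, observe that each intersection $U_i\times U'_j$ carries the section $\sigma_i\times\sigma'_j$, and apply the overlap lemma (item~(7) of Lemma~\ref{lem:proper-sec-secat}, itself Schwarz's Proposition~20). So your argument faithfully reconstructs the proof behind the paper's citation.
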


In \cite[Proposition 3.11]{BGRT} the authors obtained the subadditivity of $\text{TC}_r$ under suitable topological hypothesis. The corresponding property for higher topological complexity of maps is given next.

\begin{proposition}\label{tc-seq-product}
Let $f:X\to Y$ and $f^\prime:X^\prime\to Y^\prime$ \azul{be} two maps. If \azul{the cartesian product} $(X\times X^\prime)^{r-s}\times (Y\times Y^\prime)^{s}$ is normal, then \[\text{TC}_{r,s}(f\times f^\prime)\leq \text{TC}_{r,s}(f)+\text{TC}_{r,s}(f^\prime)-1.\]
\end{proposition}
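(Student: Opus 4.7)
The plan is to reduce the statement to Lemma~\ref{sec-product} by identifying $e_{r,s}^{f\times f'}$ with the external product $e_{r,s}^{f}\times e_{r,s}^{f'}$ under a suitable homeomorphism of the domains and codomains.

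First I would set up the natural homeomorphisms. For any spaces $A,B$, the canonical bijection $(A\times B)^{J_r}\to A^{J_r}\times B^{J_r}$ sending an $r$-multipath $\gamma$ in $A\times B$ to the pair $(\pi_A\circ\gamma,\pi_B\circ\gamma)$ is a homeomorphism (the compact-open topology behaves well under products with locally compact intervals, and $J_r$ is built out of intervals glued at the basepoint). Similarly, the obvious shuffling map
$$
(X\times X')^{r-s}\times (Y\times Y')^{s}\;\xrightarrow{\;\cong\;}\;\bigl(X^{r-s}\times Y^{s}\bigr)\times\bigl((X')^{r-s}\times (Y')^{s}\bigr)
$$
is a homeomorphism. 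A direct chase on coordinates shows that, under these identifications, the evaluation
$$
e_{r,s}^{f\times f'}=\bigl(1_{(X\times X')^{r-s}}\times (f\times f')^{s}\bigr)\circ e_r^{X\times X'}
$$
corresponds precisely to the product map $e_{r,s}^{f}\times e_{r,s}^{f'}$.

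Once this identification is in place, it suffices to apply Lemma~\ref{sec-product} to the product $e_{r,s}^{f}\times e_{r,s}^{f'}$. The normality hypothesis on $(X\times X')^{r-s}\times (Y\times Y')^{s}$ is exactly what is needed, since it transfers (via the shuffling homeomorphism) to normality of the codomain of the product map. Therefore
$$
\text{sec}\bigl(e_{r,s}^{f\times f'}\bigr)=\text{sec}\bigl(e_{r,s}^{f}\times e_{r,s}^{f'}\bigr)\leq \text{sec}\bigl(e_{r,s}^{f}\bigr)+\text{sec}\bigl(e_{r,s}^{f'}\bigr)-1,
$$
which is the desired inequality by the definition of $\text{TC}_{r,s}$.

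I do not expect any serious obstacle. The only mildly delicate point is the compatibility of the compact-open topology with the product decomposition of $(X\times X')^{J_r}$, but this is standard. The rest is bookkeeping with coordinates to verify that the product structure on $e_{r,s}^{f\times f'}$ matches $e_{r,s}^{f}\times e_{r,s}^{f'}$ after the shuffling homeomorphism.
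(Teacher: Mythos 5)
Your proposal is correct and matches the paper's own proof: the paper also identifies $e_{r,s}^{f\times f'}$ with $e_{r,s}^{f}\times e_{r,s}^{f'}$ via the homeomorphisms $(X\times X')^{J_r}\cong X^{J_r}\times (X')^{J_r}$ and the shuffling of factors in the codomain, and then invokes Lemma~\ref{sec-product}. Nothing is missing; this is precisely the argument in the paper (which attributes the strategy to Proposition~3.11 of \cite{BGRT}).
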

\begin{proof}
The proof proceeds by analogy with \azul{the proof of} \cite[Proposition~3.11]{BGRT}. Indeed, consider \blue{the  commutative diagram \azul{with horizontal homeomorphisms}}
\begin{eqnarray*}
\xymatrix{ (X\times X^\prime)^{J_r} \ar[d]_{e_{r,s}^{f\times f^\prime} } \ar[r]^{\varphi} & X^{J_r}\times {X^\prime}^{J_r} \ar[d]^{\,e_{r,s}^{f}\times e_{r,s}^{f^\prime}} & \\
       (X\times X^\prime)^{r-s}\times (Y\times Y^\prime)^s \ar[r]_{\psi\hspace{6mm}}  & \left(X^{r-s}\times Y^s\right)\times \left({X^\prime}^{r-s}\times {Y^\prime}^s\right)\azul{.}  &}
\end{eqnarray*}
\azul{Here $\varphi\left(\gamma:J_r\to X\times X^\prime\right) := \left(\rule{0mm}{3.8mm}p_X\circ\gamma:J_r\to X,\;p_{X^\prime}\circ\gamma:J_r\to X^\prime\right)$, while}  
\begin{align*}
\azul{\psi\left(\rule{0mm}{3.8mm}(x_1,x_1^\prime),\ldots,\right.}&\azul{\left((x_{r-s},x_{r-s}^\prime),(y_1,y^\prime_1),\ldots,(y_s,y^\prime_s)\right)} \\
&\azul{:= ((x_1,\ldots,x_{r-s},y_1,\ldots,y_s), (x_1^\prime,\ldots,x_{r-s}^\prime,y^\prime_1\ldots,y^\prime_s)),}
\end{align*}
where $x_i\in X$, $y_i\in Y$, $x^\prime_i\in X^\prime$ and $y^\prime_i\in Y^\prime$, \azul{and where $p_X$ and $p_{X'}$ are the obvious projections.} \blue{The} desired conclusion \blue{then} follows from Lemma~\ref{sec-product}.
\end{proof}

\subsection{Effect of pre- and post-composition}

We study the effect \blue{on the higher topological complexity of maps under} pre- and post-composition.

\begin{lemma}\label{sec-diagram}
Consider \blue{the \azul{commutative} diagram} 
\begin{eqnarray*}
\xymatrix{ X^\prime \ar[d]^{f^\prime} & X \ar[r]^{\varphi} \ar[d]_{f} & X^\prime \ar[d]^{f^\prime} & \\
     Y^\prime \ar[r]_{\xi} & Y  \ar[r]_{\psi} &  Y^\prime\azul{.} &}
\end{eqnarray*} 
\begin{enumerate}
    \item If $\psi\circ \xi\simeq 1_{Y^\prime}$ then $\text{secat}(f)\geq\text{secat}(f^\prime)$.
    \item If $\psi\circ \xi=1_{Y^\prime}$ then $\text{sec}(f)\geq\text{sec}(f^\prime)$  \azul{(and, of course, $\text{secat}(f)\geq\text{secat}(f^\prime)).$}
\end{enumerate}
\end{lemma}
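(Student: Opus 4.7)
The plan is to transfer open covers from $Y$ to $Y'$ via $\xi^{-1}$, and to convert local (homotopy) sections of $f$ over $U\subset Y$ into local (homotopy) sections of $f'$ over $\xi^{-1}(U)\subset Y'$ by pre-composing with $\xi$ and post-composing with $\varphi$. The commutativity of the right square ($f'\circ\varphi=\psi\circ f$) is what lets us push sections through~$\varphi$, while the (homotopy) retraction condition $\psi\circ\xi\simeq 1_{Y'}$ (respectively $\psi\circ\xi=1_{Y'}$) is what ensures the resulting composite reproduces the inclusion on~$Y'$ up to homotopy (respectively on the nose).

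For item~(2), suppose $\mathrm{sec}(f)=m$ and choose an open cover $\{U_1,\ldots,U_m\}$ of $Y$ together with local sections $s_i\colon U_i\to X$ of $f$. Set $V_i:=\xi^{-1}(U_i)$, which forms an open cover of $Y'$ since $\bigcup_i U_i=Y$. Define $s_i'\colon V_i\to X'$ by the composition
\[
s_i':=\varphi\circ s_i\circ\xi|_{V_i}.
\]
Using that $f'\circ\varphi=\psi\circ f$, that $f\circ s_i=incl_{U_i}$, and that $\psi\circ\xi=1_{Y'}$, a direct computation yields
\[
f'\circ s_i'=\psi\circ f\circ s_i\circ\xi|_{V_i}=\psi\circ\xi|_{V_i}=incl_{V_i},
\]
so $s_i'$ is a strict local section of $f'$ over $V_i$. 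This gives $\mathrm{sec}(f')\leq m=\mathrm{sec}(f)$.

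For item~(1), repeat the same construction with homotopy sections $s_i\colon U_i\to X$ satisfying $f\circ s_i\simeq incl_{U_i}$. Then
\[
f'\circ s_i'=\psi\circ f\circ s_i\circ\xi|_{V_i}\simeq\psi\circ\xi|_{V_i}\simeq incl_{V_i},
\]
where the first homotopy uses $f\circ s_i\simeq incl_{U_i}$ (after post-composition with $\psi$ and pre-composition with $\xi|_{V_i}$) and the second uses the hypothesis $\psi\circ\xi\simeq 1_{Y'}$. Hence $s_i'$ is a local homotopy section of $f'$ over $V_i$, and $\mathrm{secat}(f')\leq\mathrm{secat}(f)$.

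There is essentially no hard step here; the argument is a routine diagram chase, with the only mild subtlety being that $V_i=\xi^{-1}(U_i)$ already covers $Y'$ without any surjectivity assumption on $\xi$, simply because $\{U_i\}$ covers $Y$. The statement for $\mathrm{secat}$ in~(2) is of course immediate from $\mathrm{secat}\leq\mathrm{sec}$ combined with either (1) or the strict version.
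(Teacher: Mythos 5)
Your proof is correct and follows essentially the same approach as the paper's: pulling back the cover via $\xi^{-1}$ and transporting local (homotopy) sections through $\varphi\circ s_i\circ\xi$, then using $f'\circ\varphi=\psi\circ f$ together with the (homotopy) retraction hypothesis. The only difference is that you spell out the bookkeeping slightly more explicitly.
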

\begin{proof}
Suppose $U\subset Y$ and take $V=\azul{\xi}^{-1}(U)\subset Y^\prime$. Note that a map $\sigma:U\to X$ yields a map $\delta=\left(V\stackrel{\xi}{\to} U\stackrel{\sigma}{\to} X\stackrel{\varphi}{\to} X^\prime\right).$ 
If $\psi\circ \xi=1_{Y^\prime}$ \azul{($\psi\circ \xi\simeq1_{Y^\prime}$, respectively)} and $f\circ\sigma=incl_U$ ($f\circ\sigma\simeq incl_U$, respectively), then $f^\prime\circ\delta=incl_V$ ($f^\prime\circ\delta\simeq incl_V$\azul{,} respectively).
\end{proof}

\begin{proposition}\label{tc-section-map}
Consider the diagram of maps $W\stackrel{h}{\to} X\stackrel{f}{\to} Y\stackrel{g}{\to} Z$. 
 \begin{itemize}
     \item[($a$)] If $f$ admits a section \azul{(homotopy section, respectively),} then \[\text{TC}_{r,s}(f\circ h)\leq \text{TC}_{r,s}(h) \hspace{2mm} \azul{\left(\hspace{.3mm}\rule{0mm}{4mm}\text{HTC}_{r,s}(f\circ h)\leq \text{HTC}_{\azul{r,s}}(h), \text{ respectively}\right)\!,} \text{ for any } s\leq r.\]
     \item[($b$)] If $f$ admits a homotopy section, then 
\begin{align}
\text{HTC}_{r,s}(g)&\leq \text{HTC}_{r,s}(g\circ f) \text{\azul{,} for any } \azul{s\leq r;}\label{versionH}\\
\text{TC}_{r,s}(g)&\leq \text{TC}_{r,s}(g\circ f)  \text{\azul{,} for any } \azul{s<r.}\label{versionS}
\end{align}
 \end{itemize}
  In particular, if $f$ admits a section \azul{and $s\leq r$,} we get \[\text{TC}_{r}(Y) \leq \text{HTC}_{r,s}(f)\leq\text{TC}_{r,s}(f)\leq\text{TC}_{r}(X)\azul{.} \] 
\end{proposition}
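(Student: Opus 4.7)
My plan is to apply Lemma~\ref{sec-diagram} after packaging the evaluation maps into suitable commutative diagrams, handling the one delicate case (strict $\text{TC}$ in~(b)) by a direct path-concatenation argument. For~(a), let $\sigma\colon Y\to X$ be a (strict or homotopy) section of $f$. I would form the diagram
$$\xymatrix{W^{J_r}\ar[d]_{e_{r,s}^{f\circ h}} & W^{J_r}\ar[r]^{1_{W^{J_r}}}\ar[d]_{e_{r,s}^h} & W^{J_r}\ar[d]^{e_{r,s}^{f\circ h}} \\ W^{r-s}\times Y^s \ar[r]_{1_{W^{r-s}}\times\sigma^s} & W^{r-s}\times X^s \ar[r]_{1_{W^{r-s}}\times f^s} & W^{r-s}\times Y^s,}$$
whose right square commutes since $(1_{W^{r-s}}\times f^s)\circ e_{r,s}^h = e_{r,s}^{f\circ h}$, and whose bottom composite is $1_{W^{r-s}}\times(f\circ\sigma)^s$. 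This equals the identity strictly (resp.\ is homotopic to the identity) when $\sigma$ is a section (resp.\ homotopy section), so Lemma~\ref{sec-diagram}(2) (resp.~(1)) yields the $\text{TC}$ (resp.\ $\text{HTC}$) inequality of~(a).

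For the $\text{HTC}$ inequality in~(b), with $\sigma$ a homotopy section of~$f$ and $f_\#(\gamma):=f\circ\gamma$, I would use the analogous diagram
$$\xymatrix{Y^{J_r}\ar[d]_{e_{r,s}^g} & X^{J_r}\ar[r]^{f_\#}\ar[d]_{e_{r,s}^{g\circ f}} & Y^{J_r}\ar[d]^{e_{r,s}^g} \\ Y^{r-s}\times Z^s \ar[r]_{\sigma^{r-s}\times 1_{Z^s}} & X^{r-s}\times Z^s \ar[r]_{f^{r-s}\times 1_{Z^s}} & Y^{r-s}\times Z^s,}$$
whose right square commutes because $e_{r,s}^g\circ f_\# = (f^{r-s}\times 1_{Z^s})\circ e_{r,s}^{g\circ f}$ and whose bottom composite $(f\circ\sigma)^{r-s}\times 1_{Z^s}$ is homotopic to the identity; Lemma~\ref{sec-diagram}(1) then delivers $\text{HTC}_{r,s}(g)\leq \text{HTC}_{r,s}(g\circ f)$ for any $s\leq r$.

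The delicate part is the strict $\text{TC}$ inequality of~(b) for $s<r$: the bottom composite of the previous diagram is only homotopic to the identity, so Lemma~\ref{sec-diagram}(2) cannot be applied verbatim. Here I would argue by hand. Fix a homotopy $H\colon Y\times I\to Y$ from $f\circ\sigma$ to $1_Y$, and given a strict local section $\tau\colon U\to X^{J_r}$ of $e_{r,s}^{g\circ f}$ on an open $U\subset X^{r-s}\times Z^s$, set $V=(\sigma^{r-s}\times 1_{Z^s})^{-1}(U)\subset Y^{r-s}\times Z^s$; the resulting $V$'s cover $Y^{r-s}\times Z^s$ as the $U$'s vary over a cover. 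For $(y,z)\in V$, write $\tau(\sigma^{r-s}(y),z)=(\gamma_1,\ldots,\gamma_r)$ with common starting point $x_0\in X$, and define $\tau'(y,z)=(\delta_1,\ldots,\delta_r)\in Y^{J_r}$ by $\delta_j=f\circ\gamma_j$ for $j>r-s$ and by the concatenation $\delta_j=(f\circ\gamma_j)\cdot H(y_j,-)$ for $j\leq r-s$. All $\delta_j$ start at $f(x_0)$, so $(\delta_1,\ldots,\delta_r)$ indeed lies in $Y^{J_r}$; continuity of $\tau'$ is inherited from that of $\tau$, $\sigma$, $H$, $f_\#$ and the concatenation operation; and $e_{r,s}^{g}\circ\tau'=incl_V$ by construction.

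For the ``In particular'' assertion, the inequality $\text{TC}_{r,s}(f)\leq\text{TC}_r(X)$ follows from~(a) applied to $h=1_X$, while $\text{TC}_r(Y)\leq\text{HTC}_{r,s}(f)$ follows from~(b) applied to $g=1_Y$ together with $\text{HTC}_{r,s}(1_Y)=\text{TC}_{r,s}(1_Y)=\text{TC}_r(Y)$ from Remark~\ref{rem:fibration-fibration}; the middle inequality is also part of Remark~\ref{rem:fibration-fibration}. The main obstacle I expect is the by-hand construction for the strict $\text{TC}$ inequality of~(b): one must verify that the concatenation produces a continuous family of $r$-multipaths preserving the common starting point $f(x_0)$ and delivering strict evaluation, which the standard reparametrization of concatenated paths handles.
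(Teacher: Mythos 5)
Your proposal is correct and follows the same overall strategy as the paper: items ($a$) and~\eqref{versionH} are both deduced from Lemma~\ref{sec-diagram} applied to essentially the same commutative diagrams, and the ``In particular'' part follows from the same specializations ($h=1_X$ and $g=1_Y$). The one point of genuine divergence is the hands-on construction for the strict inequality~\eqref{versionS}. The paper works in the sequential model (a local section of $e^{g\circ f}_{r,s}$ is a single path evaluated at $r$ equally spaced points) and therefore must insert ``out-and-back'' homotopy corrections of the form $\overline{H(y_j,-)}\cdot(\cdots)\cdot H(y_{j+1},-)$ between consecutive evaluation points. You instead work in the $J_r$ (wedge-of-intervals) model, where the $r$ branches of a multipath are independent paths sharing a common start, so the correction amounts to simply appending $H(y_j,-)$ to the end of each of the first $r-s$ branches while leaving the last $s$ branches as $f\circ\gamma_j$. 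This yields a visibly simpler formula (no reversed homotopies), and the verifications that all branches still share the starting point $f(x_0)$ and that $e^{g}_{r,s}$ evaluates to $(y,z)$ are immediate. Both constructions are valid; yours is the more economical one, and it buys a slightly lighter bookkeeping of the path segments.
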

\begin{proof} 
\azul{We use the sequential setting.} Item $(a)$ follows Lemma~\ref{sec-diagram} \azul{applied} to the \azul{commutative} \blue{diagram}
\begin{eqnarray*}
\xymatrix{ PW \ar[d]^{e^{f\circ h}_{r,s}} & PW \ar[r]^{\,\,1} \ar[d]_{e^{h}_{r,s}} & PW \ar[d]^{e^{f\circ h}_{r,s}} & \\
     W^{r-s}\times Y^s \ar[r]_{\azul{1}\times \xi^s} & W^{r-s}\times X^{s}  \ar[r]_{\azul{1}\times f^{s}} &  W^{r-s}\times Y^s\azul{,} &}
\end{eqnarray*} 
\azul{where $\xi\colon Y\to X$ is either a section or a homotopy section of $f$. On the other hand, for item ($b$), assume \emph{only} that $\xi:Y\to X$ is a homotopy section to $f$, and consider the commutative diagram}
\begin{eqnarray*}
\xymatrix{ PY \ar[d]^{e^{g}_{r,s}} & PX \ar[r]^{\,\,f_{\#}} \ar[d]_{e^{g\circ f}_{r,s}} & PY \ar[d]^{e^{g}_{r,s}} & \\
     Y^{r-s}\times Z^s \ar[r]_{\xi^{r-s}\times \azul{1}\hspace{1mm}} & X^{r-s}\times Z^{s}  \ar[r]_{f^{r-s}\times \azul{1}\hspace{1mm}} &  Y^{r-s}\times Z^s\blue{.} &}
\end{eqnarray*} 
\azul{Since~(\ref{versionH}) follows also from Lemma~\ref{sec-diagram}, we will focus on (\ref{versionS}) assuming $s<r$ (in addition to $f\circ\xi\simeq 1_Y$).} 

\smallskip
Choose a homotopy $H:f\circ \xi\simeq 1_Y$ \azul{ and suppose we are given an open set } $U\subset X^{r-s}\times Z^{s}$ \azul{admitting} a \azul{local} section \azul{$\sigma:U\to PX$} of $e^{g\circ f}_{r,s}$. \azul{It is then elementary to check that a local section $\delta$ of $e^g_{r,s}$  on $V:=(\xi^{r-s}\times 1_{Z^s})^{-1}(U)$ is given, in terms of concatenation of paths, by} the formula
\begin{align*}
 \delta(v) =& \left(\azul{\overline{H(y_1,-)}}\cdot (f\circ\sigma((\xi(y_1),\ldots,\xi(y_{r-s}),z_{r-z+1},\ldots,z_r))\mid_{1}) \cdot H(y_2,-)\right)\cdot\\ 
 &  \left(\azul{\overline{H(y_2,-)}}\cdot (f\circ\sigma((\xi(y_1),\ldots,\xi(y_{r-s}),z_{r-z+1},\ldots,z_r))\mid_{2}) \cdot H(y_3,-)\right)\cdot\cdots \cdot\\ 
 &  \left(\azul{\overline{H(y_{r-s-1},-)}}\cdot (f\circ\sigma((\xi(y_1),\ldots,\xi(y_{r-s}),z_{r-z+1},\ldots,z_r))\mid_{r-s-1}) \cdot H(y_{r-s},-)\right)\cdot\\
 &  \left(\azul{\overline{H(y_{r-s},-)}}\cdot (f\circ\sigma((\xi(y_1),\ldots,\xi(y_{r-s}),z_{r-z+1},\ldots,z_r))\mid_{r-s})\right) \cdot\\
 &  \left(\rule{0mm}{4mm}f\circ\sigma((\xi(y_1),\ldots,\xi(y_{r-s}),z_{r-z+1},\ldots,z_r))\mid_{r-s+1} \right)\cdot\cdots\cdot\\
 &  \left(\rule{0mm}{4mm}f\circ\sigma((\xi(y_1),\ldots,\xi(y_{r-s}),z_{r-z+1},\ldots,z_r))\mid_{r-1} \right),
 \end{align*} for any $v=(y_1,\ldots,y_{r-s},z_{r-z+1},\ldots,z_r)\in V$. \azul{Here, $\overline{\tau}$ is the path~$\tau$ traversed backwards (see Remark~\ref{remark:tc-path}). Furthermore,}
$\sigma((\xi(y_1),\ldots,\xi(y_{r-s}),z_{r-z+1},\ldots,z_r))\mid_{j}$ \azul{stands for} the restriction of $\sigma((\xi(y_1),\ldots,\xi(y_{r-s}),z_{r-z+1},\ldots,z_r))$ to the segment $$\left[\dfrac{j-1}{\azul{r-1}},\dfrac{j}{\azul{r-1}}\right],$$ i.e., $\sigma((\xi(y_1),\ldots,\xi(y_{r-s}),z_{r-z+1},\ldots,z_r))\mid_j(t)$ is given by the formula  \[\sigma((\xi(y_1),\ldots,\xi(y_{r-s}),z_{r-z+1},\ldots,z_r))\left(\azul{\dfrac{t+j-1}{r-1}}\right), \quad t\in [0,1],\]  for $j=1,\ldots,r-1$. \azul{Since the sets $V$ cover $Y^{r-s}\times Z^s$ as the sets $U$ cover $X^{r-s}\times Z^s$, we get the desired inequality} $\text{TC}_{r,s}(g)\leq \text{TC}_{r,s}(g\circ f)$. 
\end{proof}

\begin{remark}\label{remark:tc-up-section}
\azul{It is highly illuminating to take a look back to item ($b$) of Propostion~\ref{tc-section-map} and its proof. For starters, it should be stressed that~(\ref{versionS}) involves the \emph{strong} form of the higher TC, even though the hypothesis on~$f$ has a homotopy nature. Such a phenomenon works because of the additional hypothesis $s<r$. Indeed, the first four lines in the definition of $\delta(v)$ allow us to incorporate the homotopy $H$ into a pullback-type construction (involving the homotopy section $\xi$) of the strict section $\delta$ out of the strict section $\sigma$. Of course, such a trick would not be need if $f$ \emph{had} a strict section, as then~(\ref{versionS}) would be true for any $s\leq r$ (using the ``same'' argument that proves~(\ref{versionH})). But then, it is more striking to remark that~(\ref{versionH}) and~(\ref{versionS}) actually have stronger forms when $s=r$, as spelled out next.}
\end{remark}

\begin{proposition}\label{prop:tcrr-map-with-section}
Let $f:X\to Y$ and $g:Y\to Z$ be maps.
\begin{enumerate}
    \item \azul{Independently of whether $f$ admits a (homotopy) section, we have} $$\text{TC}_{r,r}(g)\leq \text{TC}_{r,r}(g\circ f) \text{ \ \ and \ \ }\text{HTC}_{r,r}(g)\leq \text{HTC}_{r,r}(g\circ f).$$ In particular, $\text{TC}_r(Y)\leq \text{HTC}_{r,r}(f)\leq \text{TC}_{r,r}(f)$.  
    \item If $f$ admits a section \azul{(homotopy section, respectively)}, then 
    \[\text{TC}_{r,r}(g)=\text{TC}_{r,r}(g\circ f) \ \ \ \ \azul{(\text{HTC}_{r,r}(g)=\text{HTC}_{r,r}(g\circ f), \text{respectively})}.\] 
    In particular  $\text{TC}_r(Y)=\text{TC}_{r,r}(f)$ \ \ \ (\azul{$\text{TC}_r(Y)=\text{HTC}_{r,r}(f)$, respectively}).
\end{enumerate}
\end{proposition}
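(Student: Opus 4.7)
My plan rests on a single clean triangle that is available only because the common target of both $e_{r,r}^{g\circ f}$ and $e_{r,r}^g$ is $Z^r$ (whereas in Proposition~\ref{tc-section-map} the target was the mixed product $Y^{r-s}\times Z^s$, forcing the concatenation gymnastics in the proof of~(\ref{versionS})). Writing $f_\#\colon X^{J_r}\to Y^{J_r}$ for post-composition with $f$ and using the naturality identity $e_r^Y\circ f_\# = f^r\circ e_r^X$, one obtains the strictly commutative diagram
\begin{equation*}
\xymatrix{
X^{J_r} \ar[rr]^{f_\#} \ar[dr]_{e_{r,r}^{g\circ f}} & & Y^{J_r} \ar[dl]^{e_{r,r}^g} \\
& Z^r &
}
\end{equation*}
since $e_{r,r}^g\circ f_\# = g^r\circ e_r^Y\circ f_\# = g^r\circ f^r\circ e_r^X = e_{r,r}^{g\circ f}$. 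This is the whole engine of the proof.

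For part~(1), given any local (homotopy) section $\sigma\colon U\to X^{J_r}$ of $e_{r,r}^{g\circ f}$ over $U\subseteq Z^r$, the composite $f_\#\circ\sigma\colon U\to Y^{J_r}$ satisfies $e_{r,r}^g\circ(f_\#\circ\sigma)=e_{r,r}^{g\circ f}\circ\sigma=\mathrm{incl}_U$ (respectively $\simeq\mathrm{incl}_U$), so it is a local (homotopy) section of $e_{r,r}^g$ on the \emph{same} open set $U$. This gives both inequalities in~(1) without any hypothesis on $f$. The ``in particular'' clause follows by specializing $g=1_Y$ and invoking $\TC_{r,r}(1_Y)=\TC_r(Y)$ from Remark~\ref{rem:fibration-fibration}(1), together with the general $\mathrm{HTC}\leq\mathrm{TC}$ inequality of Remark~\ref{rem:fibration-fibration}(2).

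For part~(2), suppose $\xi\colon Y\to X$ is a (homotopy) section of $f$. Given a local (homotopy) section $\sigma\colon U\to Y^{J_r}$ of $e_{r,r}^g$, I will show that $\xi_\#\circ\sigma\colon U\to X^{J_r}$ is a local (homotopy) section of $e_{r,r}^{g\circ f}$. Indeed, using $e_r^X\circ\xi_\#=\xi^r\circ e_r^Y$,
\begin{equation*}
e_{r,r}^{g\circ f}\circ\xi_\#\circ\sigma = g^r\circ f^r\circ\xi^r\circ e_r^Y\circ\sigma = g^r\circ(f\circ\xi)^r\circ e_r^Y\circ\sigma.
\end{equation*}
When $f\circ\xi=1_Y$ this equals $g^r\circ e_r^Y\circ\sigma=\mathrm{incl}_U$, yielding $\TC_{r,r}(g\circ f)\leq\TC_{r,r}(g)$; when only $f\circ\xi\simeq 1_Y$, then $(f\circ\xi)^r\simeq 1_{Y^r}$ and hence the right-hand side is homotopic to $e_{r,r}^g\circ\sigma\simeq\mathrm{incl}_U$, giving the HTC version. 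Combined with~(1), these become equalities, and the ``in particular'' clauses follow by taking $g=1_Y$.

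There is no real obstacle: the only conceptual point worth flagging is why~(1) holds unconditionally in the $s=r$ regime even though Proposition~\ref{tc-section-map}($b$) needed a homotopy-section hypothesis; the explanation is exactly the triangle above, which is unavailable when $s<r$ because the targets of $e_{r,s}^{g\circ f}$ and $e_{r,s}^g$ then differ.
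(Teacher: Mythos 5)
Your proof is correct and takes essentially the same route as the paper: the key is the strictly commutative triangle $e_{r,r}^g\circ f_\#=e_{r,r}^{g\circ f}$ over the common target $Z^r$, which the paper packages through its Lemma~\ref{sec-diagram} (with $\xi=\psi=1_{Z^r}$) and then reuses for part~(2) by applying part~(1) to the precomposition $\sigma:Y\to X$, whereas you transfer the local (homotopy) sections explicitly via $f_\#$ and $\xi_\#$ and handle the homotopy-section case by hand rather than invoking homotopy invariance. The substance is identical, so this is merely a difference in bookkeeping.
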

\begin{proof}
\azul{Working again in the sequential context, item (1) follows immediately by applying} Lemma~\ref{sec-diagram} to the \blue{diagram}
\begin{eqnarray*}
\xymatrix{ PY \ar[d]^{e^{g}_{r,r}} & PX \ar[r]^{\,\,f_{\#}} \ar[d]_{e^{g\circ f}_{r,r}} & PY \ar[d]^{e^{g}_{r,r}} & \\
     Z^r \ar[r]_{\,\,1_{Z^r}} & Z^{r}  \ar[r]_{\,\,1_{Z^r}} &  Z^r\blue{.} &}
\end{eqnarray*} 
Moreover, if $f$ admits a section $\sigma:Y\to X$, \azul{then} $\text{TC}_{r,r}(g\circ f)\leq \text{TC}_{r,r}(g\circ f\circ\sigma)=\text{TC}_{r,r}(g)$, \azul{so in fact} $\text{TC}_{r,r}(g)=\text{TC}_{r,r}(g\circ f)$. \azul{Likewise, if $\sigma:Y\to X$ is} a homotopy section of $f$, \azul{then} $\text{HTC}_{r,r}(g\circ f)\leq \text{HTC}_{r,r}(g\circ f\circ\sigma)=\text{HTC}_{r,r}(g)$, \blue{so in fact} $\text{HTC}_{r,r}(g)=\text{HTC}_{r,r}(g\circ f)$. 
\end{proof}

The \azul{facts we have discussed in this subsection} have \azul{a number of} interesting corollaries. First, we deduce the following important invariance property, which states that the complexity of the map is not altered by a deformation retraction of the domain.

\begin{corollary}
If $\rho:X^\prime\to X$ is a deformation retraction, then for \azul{any} $f:X\to Y$ \azul{and any $s\leq r$} we have \[\text{TC}_{r,s}(f)=\text{TC}_{r,s}(f\circ \rho) \text{ \ \ and \ \ } \text{HTC}_{r,s}(f)=\text{HTC}_{r,s}(f\circ \rho).\]
\end{corollary}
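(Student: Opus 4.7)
The plan is to leverage the homotopy equivalence inherent in a deformation retraction and reduce the statement to inequalities already proved in Propositions~\ref{tc-section-map} and~\ref{prop:tcrr-map-with-section}. By definition of a deformation retraction there exists a map $i\colon X\to X'$ with $\rho\circ i=1_X$ and $i\circ\rho\simeq 1_{X'}$; hence $\rho$ admits the strict section $i$, while $i$ admits the homotopy section $\rho$. This single observation drives the entire argument.

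To establish $\TC_{r,s}(f)\leq \TC_{r,s}(f\circ\rho)$, I would apply Proposition~\ref{tc-section-map}(b) to the composite $X'\xrightarrow{\rho}X\xrightarrow{f}Y$, taking the ``$f$'' of that proposition to be $\rho$ and the ``$g$'' to be~$f$. Since $\rho$ has a homotopy section, part~(b) yields the desired inequality for all $s<r$, together with its $\text{HTC}$-analogue for all $s\leq r$; the missing case $s=r$ of $\TC$ is supplied unconditionally by Proposition~\ref{prop:tcrr-map-with-section}(1). For the reverse inequality $\TC_{r,s}(f\circ\rho)\leq \TC_{r,s}(f)$, I would feed the same two results the composite $X\xrightarrow{i}X'\xrightarrow{f\circ\rho}Y$, using the strict identity $(f\circ\rho)\circ i=f\circ(\rho\circ i)=f$. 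This time $i$ plays the role of the map admitting a homotopy section (namely $\rho$), so Proposition~\ref{tc-section-map}(b) gives $\TC_{r,s}(f\circ\rho)\leq \TC_{r,s}((f\circ\rho)\circ i)=\TC_{r,s}(f)$ for $s<r$ and its $\text{HTC}$-analogue for all $s\leq r$, and Proposition~\ref{prop:tcrr-map-with-section}(1) again supplies the case $s=r$.

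The step to watch is the strict-$\TC$ assertion when $s<r$: homotopy invariance of the \emph{strict} sectional-number $\TC_{r,s}$ looks suspicious, since $i\circ\rho$ is only homotopic (not equal) to $1_{X'}$. What rescues the argument is the forgotten coordinate available whenever $s<r$, which is exactly the mechanism that the proof of Proposition~\ref{tc-section-map}(b) exploits---via path concatenation against the homotopy $i\circ\rho\simeq 1_{X'}$---to upgrade homotopy sections into strict ones. When this mechanism breaks down at $s=r$, Proposition~\ref{prop:tcrr-map-with-section}(1) takes over. Consequently the corollary falls out by two symmetric applications of the already-established machinery, with no new technical input required.
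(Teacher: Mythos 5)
Your proof is correct and follows essentially the same route as the paper's: both reduce the statement to two applications of Proposition~\ref{tc-section-map}(b) (once to $X'\xrightarrow{\rho}X\xrightarrow{f}Y$, once to $X\xrightarrow{i}X'\xrightarrow{f\circ\rho}Y$, using $\rho\circ i=1_X$ and $i\circ\rho\simeq 1_{X'}$) together with Proposition~\ref{prop:tcrr-map-with-section} for the boundary case $s=r$. The only cosmetic difference is that the paper invokes Proposition~\ref{prop:tcrr-map-with-section}(2) once (using that $\rho$ admits a strict section) to get the $s=r$ equality directly, whereas you obtain it by citing item~(1) twice for the two inequalities.
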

\begin{proof}
Let $i:X\hookrightarrow X^\prime$ be the inclusion map, so that $\rho\circ i=1_X$ and $i\circ \rho\simeq 1_{X^\prime}$. Because $\rho$ admits a section, the case $s=r$ follows from Proposition~\ref{prop:tcrr-map-with-section}. \azul{So, we assume} $s<r$. Item $\azul{(b)}$ of Proposition~\ref{tc-section-map} implies $\text{TC}_{r,s}(f)\leq \text{TC}_{r,s}(f\circ \rho)$ \azul{on the nose, as well as} $\text{TC}_{r,s}(f\circ \rho)\leq \text{TC}_{r,s}(f)$, \azul{since} $(f\circ \rho)\circ i=f$. Similarly, we get the equality $\text{HTC}_{r,s}(f)=\text{HTC}_{r,s}(f\circ \rho)$.
\end{proof}

The following \azul{fact (written in the sequential setting) is analogous to} \cite[Lemma 4.6]{cesarsectional}.

\begin{lemma}\label{lemma:pullback}
If $f:X\to Y$ is a fibration and $f^\prime:Y\to Y^\prime$ is a map, then \blue{we have the \azul{quasi} pullback diagram} \begin{eqnarray*}
\xymatrix{ PX \ar[r]^{\,\,f_{\#}} \ar[d]_{e^{f^\prime\circ f}_{r,r-1}} & PY \ar[d]^{e^{f^\prime}_{r,r-1}} & \\
       X\times {Y^\prime}^{r-1}  \ar[r]_{\,\, f\times 1_{{Y^\prime}^{r-1}}} &  Y\times {Y^\prime}^{r-1}\blue{.} &}
\end{eqnarray*}
\end{lemma}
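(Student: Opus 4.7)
My plan is to verify commutativity of the square, and then produce the required filler by lifting paths in $Y$ against the fibration $f$ using a lifting function.

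First I would adopt the sequential model for $e^{-}_{r,r-1}$ (see Remark~\ref{remark:tc-path}), so that, for a map $g\colon Z\to W$,
\[
e^{g}_{r,r-1}(\gamma)=\left(\gamma(0),\,g(\gamma(\tfrac{1}{r-1})),\ldots,g(\gamma(1))\right).
\]
With this description, commutativity of the square reduces to the identity
\[
e^{f'}_{r,r-1}(f\circ\gamma)=\left(f(\gamma(0)),\,f'f(\gamma(\tfrac{1}{r-1})),\ldots,f'f(\gamma(1))\right)=(f\times 1)\circ e^{f'\circ f}_{r,r-1}(\gamma),
\]
which is routine.

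For the quasi-pullback property, I would take an arbitrary test space $Z$ with maps $\alpha\colon Z\to X\times {Y'}^{r-1}$ and $\beta\colon Z\to PY$ satisfying $(f\times 1)\circ\alpha=e^{f'}_{r,r-1}\circ\beta$, and construct $h\colon Z\to PX$ making both triangles commute on the nose. Writing $\alpha(z)=(a(z),b_1(z),\ldots,b_{r-1}(z))$, the compatibility condition translates into $\beta(z)(0)=f(a(z))$ and $f'(\beta(z)(\tfrac{j}{r-1}))=b_j(z)$ for $j=1,\ldots,r-1$. The key point is that the first of these equalities tells us that the assignment $z\mapsto(a(z),\beta(z))$ lands in the pullback space $E_f=\{(x,\gamma)\in X\times PY:\gamma(0)=f(x)\}$.

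Since $f$ is a fibration, I would invoke a lifting function $\Gamma\colon E_f\to PX$ as in~(\ref{lifun}), which satisfies $\Gamma(x,\gamma)(0)=x$ and $f\circ\Gamma(x,\gamma)=\gamma$, and set
\[
h(z):=\Gamma\bigl(a(z),\beta(z)\bigr).
\]
Then $h(z)(0)=a(z)$ and $f\circ h(z)=\beta(z)$, from which $f_{\#}\circ h=\beta$ is immediate and
\[
e^{f'\circ f}_{r,r-1}(h(z))=\left(a(z),\,f'(\beta(z)(\tfrac{1}{r-1})),\ldots,f'(\beta(z)(1))\right)=(a(z),b_1(z),\ldots,b_{r-1}(z))=\alpha(z).
\]
The only nontrivial ingredient is the existence of the lifting function $\Gamma$, which is exactly the characterization of Hurewicz fibrations recalled in~(\ref{lifun}); everything else is bookkeeping with evaluations. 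Note that $h$ is not claimed to be unique (consistent with the definition of quasi pullback).
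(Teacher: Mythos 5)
Your proposal is correct and takes essentially the same approach as the paper: both arguments lift the paths $\beta(z)$ in $Y$ (starting at $f(a(z))$) to paths in $X$ starting at $a(z)$ via the Hurewicz fibration property of $f$. You package this through the lifting function $\Gamma$ recalled in~(\ref{lifun}), whereas the paper applies the homotopy lifting property to the test square $Z\hookrightarrow Z\times I\to Y$ directly; these are interchangeable formulations of the same step.
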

\begin{proof}
 \azul{Choose maps $\beta$ and $\alpha$ that render the commutative diagram} 
 \begin{eqnarray*}
\xymatrix@C=3cm{ Z \ar@/^10pt/[drr]^{\,\,\beta} \ar@/_10pt/[ddr]_{\alpha} \ar@{-->}[dr]_{H} &   &  &\\
& PX \ar[r]^{\,\,f_{\#}} \ar[d]^{e^{f^\prime\circ f}_{r,r-1}} & PY \ar[d]^{e^{f^\prime}_{r,r-1}} & \\
       & X\times {Y^\prime}^{r-1}   \ar[r]_{\,\, f\times 1_{{Y^\prime}^{r-1}}} &  Y\times {Y^\prime}^{r-1}\azul{,} &}
\end{eqnarray*}
 \azul{when the dashed map $H$ is ignored. We need to construct a map $H$ that still fits in the commutative diagram.} 
 
 \smallskip \azul{Consider} the commutative diagram
\begin{eqnarray*}
\xymatrix{ Z \ar[r]^{\,\,p_{1}\circ\alpha} \ar[d]_{i_0} & X \ar[d]^{f} \\
       Z\times I  \ar[r]_{\azul{\widehat{\beta}}} &  Y\azul{,}}
\end{eqnarray*} where $p_1$ is the projection onto the first coordinate and $\azul{\widehat{\beta}}:Z\times I\to Y$ is given by $\azul{\widehat{\beta}}(z,t)=\beta(z)(t)$. Because $f$ is a fibration, there exists $G:Z\times I\to X$ \azul{rendering the commutative diagram}
\begin{eqnarray*}
\xymatrix{ Z \ar[r]^{\,\,p_{1}\circ\alpha} \ar[d]_{i_0} & X \ar[d]^{f} \\
       Z\times I\ar[ru]_{G}  \ar[r]_{\hat{\beta}} &  Y\azul{.}}
\end{eqnarray*} \azul{It is elementary to check that the map} $H:Z\to PX$ given by $H(z)(t)=G(z,t)$ \azul{has the required property.}
\end{proof}

\azul{Just as Proposition~\ref{prop:tcrr-map-with-section} specializes to $s=r$, the next result specializes to $s=r-1$ providing a} generalization of~\cite[Proposition 4.7]{cesarsectional}. \azul{The proof follows directly from item~(1) of Lemma~\ref{lem:proper-sec-secat} and Lemma~\ref{lemma:pullback}.}

\begin{corollary}\label{prop:tcr-r-1-fibration}
If $f:X\to Y$ is a fibration, then $\text{TC}_{r,r-1}(f^\prime\circ f)\leq \text{TC}_{r,r-1}(f^\prime)$ for any map $f^\prime:Y\to Y^\prime$. In particular, $\text{TC}_{r,r-1}(f)\leq \text{TC}_r(Y)$.
\end{corollary}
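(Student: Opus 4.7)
The plan is to invoke the two results cited in the hint in a direct and essentially mechanical way. First I would apply Lemma~\ref{lemma:pullback} to the fibration $f:X\to Y$ and the map $f^\prime:Y\to Y^\prime$, yielding the quasi pullback square
\[
\xymatrix{ PX \ar[r]^{f_{\#}} \ar[d]_{e^{f^\prime\circ f}_{r,r-1}} & PY \ar[d]^{e^{f^\prime}_{r,r-1}} \\
 X\times {Y^\prime}^{r-1} \ar[r]_{f\times 1_{{Y^\prime}^{r-1}}} & Y\times {Y^\prime}^{r-1}.}
\]
Because this is a quasi pullback, item~(1) of Lemma~\ref{lem:proper-sec-secat} gives
\[
\text{sec}\hspace{.1mm}\bigl(e^{f^\prime\circ f}_{r,r-1}\bigr)\leq \text{sec}\hspace{.1mm}\bigl(e^{f^\prime}_{r,r-1}\bigr),
\]
which, by Definition~\ref{strong-higher-tc}(1), is precisely the inequality $\text{TC}_{r,r-1}(f^\prime\circ f)\leq \text{TC}_{r,r-1}(f^\prime)$.

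For the ``In particular'' clause, I would specialize $f^\prime$ to the identity map $1_Y:Y\to Y$. Then $f^\prime\circ f=f$ on the left, while on the right, Remark~\ref{rem:fibration-fibration}(1) identifies $\text{TC}_{r,r-1}(1_Y)$ with Rudyak's higher topological complexity $\text{TC}_r(Y)$. The previously displayed inequality then reads $\text{TC}_{r,r-1}(f)\leq \text{TC}_r(Y)$, as required.

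There is no real obstacle in this argument: the whole content is packaged into Lemma~\ref{lemma:pullback} (which verifies the quasi pullback property using the lifting function of the fibration $f$) and into the general monotonicity of sectional numbers under quasi pullbacks (item~(1) of Lemma~\ref{lem:proper-sec-secat}). The only minor point to double check is that we are indeed entitled to apply Lemma~\ref{lem:proper-sec-secat}(1) in our situation; this is immediate since the diagram produced by Lemma~\ref{lemma:pullback} is literally a (quasi) pullback square of the form appearing in~(\ref{xfy}).
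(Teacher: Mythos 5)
Your proof is correct and matches the paper's argument exactly: the paper notes that the result ``follows directly from item~(1) of Lemma~\ref{lem:proper-sec-secat} and Lemma~\ref{lemma:pullback},'' which is precisely the quasi-pullback monotonicity argument you carry out, and the ``in particular'' clause is obtained by taking $f'=1_Y$ just as you do.
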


\azul{In turn,} Proposition~\ref{tc-section-map} \azul{and Corollary}~\ref{prop:tcr-r-1-fibration} \azul{can be combined} to deduce the following
important property, which states that the $(r,r-1)$-complexity of a fibration \azul{admiting} a homotopy section depends only of the complexity of its co-domain.

\begin{corollary}\label{cor:fibration-section-r-r-1}
If $f:X\to Y$ is a fibration that admits a homotopy section, then \[\text{TC}_{r,r-1}(f)=\text{TC}_{r}(Y).\]
\end{corollary}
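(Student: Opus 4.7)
My plan is to prove the equality by obtaining the two inequalities separately, each as an immediate consequence of results already established in the paper.

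First, I would obtain the upper bound $\text{TC}_{r,r-1}(f)\leq \text{TC}_r(Y)$. This is exactly the second assertion of Corollary~\ref{prop:tcr-r-1-fibration}, which only requires $f$ to be a fibration and so applies directly under the hypotheses of the present corollary. No use of the homotopy section is needed here.

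Next, for the lower bound $\text{TC}_r(Y)\leq \text{TC}_{r,r-1}(f)$, I would invoke the inequality~(\ref{versionS}) of Proposition~\ref{tc-section-map}($b$) applied to the composition $1_Y\circ f = f$, with $s=r-1$. The hypothesis that $f$ admits a homotopy section is precisely what is needed to apply that item, and the strict inequality $s=r-1<r$ is allowed. This yields
\[
\text{TC}_{r,r-1}(1_Y)\leq \text{TC}_{r,r-1}(1_Y\circ f)=\text{TC}_{r,r-1}(f).
\]
By part (1) of Remark~\ref{rem:fibration-fibration}, $\text{TC}_{r,r-1}(1_Y)=\text{TC}_r(Y)$, which gives the required lower bound.

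Combining the two inequalities yields the asserted equality. I do not foresee any serious obstacle in carrying this out, since the work is entirely a matter of assembling Corollary~\ref{prop:tcr-r-1-fibration} (for the easy upper bound) and Proposition~\ref{tc-section-map}($b$) together with Remark~\ref{rem:fibration-fibration}(1) (for the lower bound). The only delicate point worth mentioning is that the lower bound genuinely requires the \emph{strong} form of~(\ref{versionS}) with $s<r$, exactly the situation discussed in Remark~\ref{remark:tc-up-section}; the condition $s=r-1<r$ is what makes the argument work with a mere homotopy section rather than a strict one.
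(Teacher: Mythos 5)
Your proof is correct and follows exactly the route the paper indicates: the upper bound is Corollary~\ref{prop:tcr-r-1-fibration} applied with $f'=1_Y$, and the lower bound is~(\ref{versionS}) of Proposition~\ref{tc-section-map}($b$) with $g=1_Y$ and $s=r-1<r$, combined with $\text{TC}_{r,r-1}(1_Y)=\text{TC}_r(Y)$. (A minor alternative for the lower bound: one could instead invoke~(\ref{versionH}) together with item~(2) of Remark~\ref{rem:fibration-fibration}, since $f$ is a fibration and hence $\text{HTC}_{r,r-1}(f)=\text{TC}_{r,r-1}(f)$; your choice of~(\ref{versionS}) is equally valid and avoids that extra step.)
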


\begin{example}
For the projection $p_X:X\times F\to X$ we have $\text{TC}_{r,r-1}(p_X)=\text{TC}_{r}(X).$
\end{example}

\red{Item (2) of Proposition~\ref{prop:tcrr-map-with-section} \azulito{together with Corollaries}~\ref{cor:all-tc} \azulito{and~\ref{cor:fibration-section-r-r-1} yield} the following \azulito{omnibus} statement, which \azulito{comprises the fact that, for large values of $s$, $\TC_{r,s}$} \azulito{unifies} previous \azul{notions of topological complexity}.}

\begin{corollary}\label{cor:unification}
   \red{If $f:X\to Y$ is a fibration that admits a homotopy section, then for any $r\geq 2$, we have }
   \[\red{\mathrm{TC}^{RS}_{r}(f)=\mathrm{TC}^{MW}_{r}(f)=\mathrm{sec}_{f^r}(e_r^Y)=\mathrm{HTC}_{r,r}(f)=\mathrm{TC}_{r,r}(f)=\text{TC}_{r,r-1}(f)=\text{TC}_{r}(Y).}\] 
\end{corollary}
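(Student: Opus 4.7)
The plan is to recognize that this corollary is an omnibus assembling three previously established results, so the proof amounts to a bookkeeping argument checking that the hypothesis ``fibration admitting a homotopy section'' simultaneously activates all of them. The key preliminary observation is the one already recorded in Section~\ref{secprelim}: because $f$ is a fibration, any homotopy section $\xi\colon Y\to X$ (i.e.\ $f\circ\xi\simeq 1_Y$) can be replaced by a strict section $\xi'\colon Y\to X$ (i.e.\ $f\circ\xi'=1_Y$) via the homotopy lifting property. Consequently, every statement in the paper that assumed a strict section is at our disposal.

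With the strict section in hand, I would split the seven-fold equality into three blocks. First, Corollary~\ref{cor:all-tc}, applied to $f$ (a map admitting a section), yields the first five equalities
\[
\mathrm{TC}^{RS}_{r}(f)=\mathrm{TC}^{MW}_{r}(f)=\mathrm{sec}_{f^r}(e_r^Y)=\mathrm{HTC}_{r,r}(f)=\mathrm{TC}_{r,r}(f).
\]
Second, item~(2) of Proposition~\ref{prop:tcrr-map-with-section}, applied with $g=1_Y$ and the same strict section, gives $\mathrm{TC}_{r,r}(f)=\mathrm{TC}_r(Y)$. Third, Corollary~\ref{cor:fibration-section-r-r-1}, whose hypothesis (fibration with a homotopy section) is literally what we are assuming, provides the remaining equality $\mathrm{TC}_{r,r-1}(f)=\mathrm{TC}_r(Y)$. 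Chaining these three blocks produces the required string of equalities.

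There is essentially no obstacle here; the content of the corollary lies entirely in the earlier results. The only point that deserves to be flagged explicitly in the write-up is why the hypothesis of a \emph{homotopy} section (rather than a strict one) suffices to invoke Corollary~\ref{cor:all-tc} and Proposition~\ref{prop:tcrr-map-with-section}(2), both of whose statements as recorded above are phrased for strict sections: this is precisely the promotion $\xi\rightsquigarrow\xi'$ enabled by the fibration hypothesis. Once this remark is in place, the proof reduces to a one-line concatenation of the three cited results.
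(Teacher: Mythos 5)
Your proof is correct and follows exactly the route the paper intends: the text immediately preceding the corollary explicitly credits it to Corollary~\ref{cor:all-tc}, item~(2) of Proposition~\ref{prop:tcrr-map-with-section}, and Corollary~\ref{cor:fibration-section-r-r-1}. The only addition you make is to spell out the promotion of the homotopy section to a strict one via the homotopy lifting property (a fact the paper records in Section~\ref{secprelim} but leaves implicit here); this is a worthwhile clarification but does not change the argument.
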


\subsection{Homotopy invariance}
Recall that two maps $f:X\to Y$ and $f^\prime:X^\prime\to Y$ are said to be \textit{fibre homotopy
equivalent} (or FHE-equivalent) if there are commutative diagrams of the form 
\begin{eqnarray*}
\xymatrix{ X \ar[rr]^{\psi} \ar[rd]_{f} & & X^{\prime} \ar[ld]^{f^\prime} & \\
        &  Y & &} & \xymatrix{ X^{\prime} \ar[rd]_{f^\prime}  \ar[rr]^{\varphi}  & & X \ar[ld]^{f}& \\
        &  Y & &}
\end{eqnarray*}
and the maps $\varphi\circ\psi$ and $\psi\circ\varphi$ are homotopic to the respective identity map by fibre preserving
homotopies. 

In \cite[Corollary 3.9]{pavesic2019} the author \blue{proved} the FHE-invariance of $\text{TC}(f)$.
\azul{A generalization of the} corresponding property for the higher case $\text{TC}_{r,s}(f)$ is given next.

\begin{proposition}
Given $f:X\to Y$ and $f^\prime:X^\prime\to Y$\blue{,} assume that there exist fibrewise maps $\psi:X\to X^\prime$ and $\varphi:X^\prime\to X$ that \blue{are} homotopy inverses \blue{of each} other. Then \[\text{TC}_{r,s}(f)=\text{TC}_{r,s}(f^\prime) \text{ \ \ and \ \ } \text{HTC}_{r,s}(f)=\text{HTC}_{r,s}(f^\prime)\azul{,}\]
\azul{for any $s\leq r$.} In particular, the \azul{$(r,s)$-}higher topological complexity is a FHE-invariant.
\end{proposition}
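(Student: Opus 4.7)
The plan is to reduce everything to the commutative diagrams at the path-space level coming from the fibrewise conditions $f'\psi=f$ and $f\varphi=f'$. A direct calculation, using $(f')^s\circ\psi^s=(f'\psi)^s=f^s$ and its analogue for $\varphi$, shows that
$$e^{f'}_{r,s}\circ\psi_\#=(\psi^{r-s}\times 1_{Y^s})\circ e^{f}_{r,s}\qquad\text{and}\qquad e^{f}_{r,s}\circ\varphi_\#=(\varphi^{r-s}\times 1_{Y^s})\circ e^{f'}_{r,s},$$
where $\psi_\#:X^{J_r}\to (X')^{J_r}$ and $\varphi_\#:(X')^{J_r}\to X^{J_r}$ denote post-composition. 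The assumed homotopy $\psi\varphi\simeq 1_{X'}$ then yields $(\psi^{r-s}\times 1_{Y^s})\circ(\varphi^{r-s}\times 1_{Y^s})=(\psi\varphi)^{r-s}\times 1_{Y^s}\simeq 1$, and similarly for the mirror composite.

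For the $\mathrm{HTC}_{r,s}$ equality the plan is to invoke Lemma~\ref{sec-diagram}(1) with its $(f,f',\varphi,\xi,\psi)$ taken to be $(e^{f}_{r,s},\,e^{f'}_{r,s},\,\psi_\#,\,\varphi^{r-s}\times 1_{Y^s},\,\psi^{r-s}\times 1_{Y^s})$. This gives $\mathrm{HTC}_{r,s}(f)\geq\mathrm{HTC}_{r,s}(f')$ at once, and interchanging the roles of $\psi$ and $\varphi$ yields the reverse inequality.

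For the $\mathrm{TC}_{r,s}$ equality, Lemma~\ref{sec-diagram}(2) is unavailable, since on the base we only have a homotopy, not a strict identity. Instead, I would imitate the concatenation trick from the proof of Proposition~\ref{tc-section-map}(b). Fix a homotopy $H:X'\times I\to X'$ from $\psi\varphi$ to $1_{X'}$, and let $\{U_i\}_{i=1}^n$ be an open cover of $X^{r-s}\times Y^s$ with strict local sections $\sigma_i:U_i\to X^{J_r}$ of $e^{f}_{r,s}$. Set $V_i:=(\varphi^{r-s}\times 1_{Y^s})^{-1}(U_i)$; these sets form an open cover of $(X')^{r-s}\times Y^s$. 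For $v=(x'_1,\ldots,x'_{r-s},y_1,\ldots,y_s)\in V_i$, write $(\gamma_1,\ldots,\gamma_r):=\sigma_i\bigl((\varphi^{r-s}\times 1_{Y^s})(v)\bigr)$, and define $\rho_i(v)\in (X')^{J_r}$ componentwise by
$$\rho_i(v)_j=\begin{cases}(\psi\circ\gamma_j)\cdot H(x'_j,-), & 1\leq j\leq r-s,\\ \psi\circ\gamma_j, & r-s<j\leq r.\end{cases}$$
All components share the common starting point $\psi(\gamma_1(0))\in X'$, so $\rho_i(v)$ is a legitimate element of $(X')^{J_r}$, and continuity in $v$ is immediate. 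The endpoint check splits into two cases: for $j\leq r-s$ the endpoint is $H(x'_j,1)=x'_j$, and for $j>r-s$ one has $f'(\psi\gamma_j(1))=f(\gamma_j(1))=y_{j-(r-s)}$. Hence $e^{f'}_{r,s}\circ\rho_i=\mathrm{incl}_{V_i}$, giving $\mathrm{TC}_{r,s}(f')\leq \mathrm{TC}_{r,s}(f)$; symmetry (using a homotopy $\varphi\psi\simeq 1_X$) completes the proof.

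The main technical care is the bookkeeping around the concatenation, but it is routine. Two points are worth stressing: first, $H$ need not be fibrewise, because the concatenation alters only the $X^{r-s}$-coordinates, where no fiber condition is imposed, while the $Y^s$-coordinates are fixed automatically through the identity $f'\circ\psi=f$ applied to the unmodified paths $\psi\circ\gamma_j$; second, the degenerate case $s=r$ collapses the concatenation step, reducing $\rho_i$ to $\psi_\#\circ\sigma_i$, which also works directly.
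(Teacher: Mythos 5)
Your proposal is correct. However, it takes a different route from the paper. The paper's proof is a one-liner: since $\psi$ and $\varphi$ are fibrewise, $f=f'\circ\psi$ and $f'=f\circ\varphi$; since each of $\psi,\varphi$ admits a homotopy section (namely the other), the chain
$\text{TC}_{r,s}(f)=\text{TC}_{r,s}(f'\circ\psi)\geq\text{TC}_{r,s}(f')=\text{TC}_{r,s}(f\circ\varphi)\geq\text{TC}_{r,s}(f)$
follows by citing Propositions~\ref{tc-section-map}(b) (for $s<r$) and~\ref{prop:tcrr-map-with-section}(1) (for $s=r$), and the same for $\text{HTC}_{r,s}$. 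You instead unpack those citations and give a direct, self-contained construction: Lemma~\ref{sec-diagram}(1) for the $\text{HTC}_{r,s}$ half, and a bespoke concatenation for $\text{TC}_{r,s}$. This is a legitimate alternative, and it has a genuine upside: because you exploit the fibrewise identity $f'\circ\psi=f$ from the start, the $Y^s$-coordinates need no correction at all, so your concatenation formula for $\rho_i(v)$ is considerably simpler than the one appearing in the proof of Proposition~\ref{tc-section-map}(b), and the $s=r$ case degenerates gracefully to $\psi_\#\circ\sigma_i$ rather than requiring a separate citation. You also correctly observe that the homotopy $\psi\varphi\simeq 1_{X'}$ need not be fibre-preserving, in agreement with the hypotheses as stated (which are slightly weaker than FHE). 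The price is length: what the paper dispatches in one line, you redo from scratch, effectively re-deriving a special case of Proposition~\ref{tc-section-map}(b). Both are valid; the paper's version is preferable once the earlier propositions are in place, while yours is more transparent in isolation and makes visible why the fibrewise hypothesis simplifies the bookkeeping.
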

\begin{proof}
By Proposition~\ref{tc-section-map} and Proposition~\ref{prop:tcrr-map-with-section} we have
\begin{eqnarray*}
\text{TC}_{r,s}(f) &=& \text{TC}_{r,s}(f^\prime\circ\psi)\geq \text{TC}_{r,s}(f^\prime)= \text{TC}_{r,s}(f\circ\varphi)\geq \text{TC}_{r,s}(f),
\end{eqnarray*} \azul{so} $\text{TC}_{r,s}(f)=\text{TC}_{r,s}(f^\prime).$ Similarly, we get the equality $\text{HTC}_{r,s}(f)=\text{HTC}_{r,s}(f^\prime)$.
\end{proof}

On the other hand, from item (3) of Lemma~\ref{lem:proper-sec-secat} we \azul{see} that the \azul{homotopy higher topological} complexity is a homotopy invariant\azul{:}

\begin{proposition}
If $f\simeq g$ then $\text{HTC}_{r,s}(f)=\text{HTC}_{r,s}(g)$\azul{, for any $s\leq r$.}
\end{proposition}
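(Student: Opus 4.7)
The plan is to reduce the statement directly to item~(3) of Lemma~\ref{lem:proper-sec-secat}, which asserts that $\text{secat}$ depends only on the homotopy class of the underlying map. By definition, $\text{HTC}_{r,s}(f)=\text{secat}(e_{r,s}^f)$ where $e_{r,s}^f=(1_{X^{r-s}}\times f^s)\circ e_r^X$, so it suffices to show that $e_{r,s}^f\simeq e_{r,s}^g$ as maps from $X^{J_r}$ to $X^{r-s}\times Y^s$.

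The first step is to take a homotopy $F\colon X\times I\to Y$ between $f$ and $g$. From $F$ one builds, coordinatewise, a homotopy $F^s\colon X^s\times I\to Y^s$ between $f^s$ and $g^s$ by the formula $F^s((x_1,\ldots,x_s),t)=(F(x_1,t),\ldots,F(x_s,t))$. Smashing with the identity of $X^{r-s}$ in the domain gives a homotopy $1_{X^{r-s}}\times f^s\simeq 1_{X^{r-s}}\times g^s$ as maps $X^r\to X^{r-s}\times Y^s$. Precomposing with the fixed map $e_r^X\colon X^{J_r}\to X^r$ yields $e_{r,s}^f\simeq e_{r,s}^g$.

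The second (and only remaining) step is to invoke item~(3) of Lemma~\ref{lem:proper-sec-secat}, which immediately gives $\text{secat}(e_{r,s}^f)=\text{secat}(e_{r,s}^g)$, i.e.\ $\text{HTC}_{r,s}(f)=\text{HTC}_{r,s}(g)$. There is no real obstacle here: homotopies lift through products and left-compositions in the obvious way, and the whole content of the statement is that the homotopy invariance of $\text{secat}$ propagates through the very simple construction $f\mapsto e_{r,s}^f$. In particular, no fibration hypothesis on $f$ is needed, which is consistent with the fact that $\text{HTC}_{r,s}$, unlike $\text{TC}_{r,s}$, is built from the homotopy (rather than strict) sectional category.
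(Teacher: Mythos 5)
Your proof is correct and matches the paper's: the paper derives the proposition directly from item~(3) of Lemma~\ref{lem:proper-sec-secat}, and you simply make explicit the (routine) verification that $f\simeq g$ implies $e_{r,s}^f\simeq e_{r,s}^g$, which the paper leaves implicit.
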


\subsection{\azul{Upper bounds}}

\blue{In} \cite[Theorem 3.17]{pavesic2019}, \azul{as corrected in version 2 of the Arxiv version, Pave\v{s}i\'{c}} presents an upper bound  of $\text{TC}(f)$ for any map $f$. \azul{We next generalize} such a \azul{fact by giving} an upper estimate \azul{for} the \azul{$(2s,s)$}-higher \azul{topological} complexity \azul{of} any map~$f$.

\begin{proposition}\label{general-upper-estimate}
Let $f:X\to Y$ be a map with $X$ path-connected and $X^s\times Y^s$ normal. We have
\begin{equation}\label{general-estimate}
    \text{TC}_{2s,s}(f)\leq\text{cat}(X^s)+\text{cat}(X^s)\cdot\text{sec}(f^s)-1.
\end{equation}
\end{proposition}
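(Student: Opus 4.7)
The plan is to build two open covers of $X^s\times Y^s$ whose pairwise intersections admit strict local sections of $e^f_{2s,s}$, and then invoke the Schwarz subadditivity property (item (7) of Lemma~\ref{lem:proper-sec-secat}) to combine them, using the hypothesis that $X^s\times Y^s$ is normal. Concretely, let $k=\text{cat}(X^s)$ and $\ell=\text{sec}(f^s)$. Choose a categorical open cover $\{U_\alpha\}_{\alpha=1}^k$ of $X^s$ with nullhomotopies $h_\alpha\colon U_\alpha\times I\to X^s$ satisfying $h_\alpha(z,0)=z$ and $h_\alpha(z,1)=p_\alpha$ for points $p_\alpha\in X^s$. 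Choose also an open cover $\{V_\beta\}_{\beta=1}^\ell$ of $Y^s$ with local sections $\sigma_\beta\colon V_\beta\to X^s$ of $f^s$. Since $X$ is path-connected, fix a basepoint $a\in X$ and, for each $\alpha=1,\dots,k$ and each coordinate $k'=1,\dots,s$, a path $\xi_\alpha^{(k')}\colon I\to X$ from $a$ to the $k'$-th coordinate of $p_\alpha$.

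Define the first cover by $A_\alpha := U_\alpha\times Y^s$, giving $k$ open sets that cover $X^s\times Y^s$. Define the second cover by
\[
B_{\beta,\alpha'} := X^s\times\bigl(V_\beta\cap\sigma_\beta^{-1}(U_{\alpha'})\bigr),
\]
for $\beta=1,\dots,\ell$ and $\alpha'=1,\dots,k$, giving $k\ell$ open sets; they cover $X^s\times Y^s$ since, for any $(x,y)$, we can pick $\beta$ with $y\in V_\beta$ and then $\alpha'$ with $\sigma_\beta(y)\in U_{\alpha'}$.

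On each intersection $A_\alpha\cap B_{\beta,\alpha'}$, I will exhibit a strict local section $s\colon A_\alpha\cap B_{\beta,\alpha'}\to X^{J_{2s}}$ of $e^f_{2s,s}$ as follows. For $(x,y)$ in this set, let $z := \sigma_\beta(y)\in U_{\alpha'}\subset X^s$, so that $f^s(z)=y$. Send $(x,y)$ to the $2s$-multipath with common starting vertex $a\in X$ whose branches are:
\begin{itemize}
\item for $k'=1,\dots,s$, the concatenation of $\xi_\alpha^{(k')}$ with the reverse of $t\mapsto h_\alpha(x,t)_{k'}$, ending at $x_{k'}$;
\item for $k'=1,\dots,s$, the concatenation of $\xi_{\alpha'}^{(k')}$ with the reverse of $t\mapsto h_{\alpha'}(z,t)_{k'}$, ending at $z_{k'}$.
\end{itemize}
Continuity of $s$ follows from continuity of $h_\alpha$, $h_{\alpha'}$ and $\sigma_\beta$, and the endpoint evaluation gives exactly $(x_1,\dots,x_s,f(z_1),\dots,f(z_s))=(x,y)$, so $e^f_{2s,s}\circ s=\text{incl}$.

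With these two covers and these sections on intersections in hand, Lemma~\ref{lem:proper-sec-secat}(7) (valid because $X^s\times Y^s$ is normal) yields $\text{sec}(e^f_{2s,s})\leq k+k\ell-1$, which is the desired bound~(\ref{general-estimate}). The main obstacle is the bookkeeping in the section formula: one must verify that the common starting point $a$ truly works for all $2s$ branches simultaneously, which is precisely where path-connectedness of $X$ is used (to pick the auxiliary paths $\xi_\alpha^{(k')}$); the rest is routine.
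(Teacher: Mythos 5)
Your proof is correct and follows essentially the same route as the paper's: build two open covers of $X^s\times Y^s$ of sizes $\text{cat}(X^s)$ and $\text{cat}(X^s)\cdot\text{sec}(f^s)$, exhibit a strict local section of $e^f_{2s,s}$ on each pairwise intersection by concatenating contracting homotopies on the $X^s$ side with a lifted copy of them along the chosen local section of $f^s$, and then invoke Schwarz's subadditivity (item (7) of Lemma~\ref{lem:proper-sec-secat}) using normality. The only differences are cosmetic: you work with $X^{J_{2s}}$ and explicit basepoint paths $\xi_\alpha^{(k')}$, while the paper uses the sequential model and the WLOG that all categorical pieces contract to a fixed $(x_0,\ldots,x_0)$; your version is in fact slightly more careful about the index bookkeeping (allowing $\alpha\neq\alpha'$), which the paper leaves implicit.
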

\begin{proof}
\blue{Fix} $x_0\in X$ \azul{and let $U$ be an open subset of $X^s$} so that there exists a homotopy $H:U\times \azul{[0,1]}\to X^s$ \azul{from} the inclusion \azul{$U\hookrightarrow X^s$ to} the constant map
to $(x_0,\ldots,x_0)\in X^s$. \azul{Assume also that $s:V\to X^s$ is a local section of $f^s$ on an open subset $V$ of  $Y^s$. The} map $K:(V\cap s^{-1}(U))\times \azul{[0,1]}\to X^s$ given by $K(v,t)=H(s(v),t)$ is a homotopy from the restriction of $s$ to the constant map to $(x_0,\ldots,x_0)$. \azul{Then, in terms of concatenation of paths, the} formula
\begin{align*}
\delta(u,v) &= \left[(p_1\circ H(u,-))\cdot (p_2\circ\azul{\overline{H(u,-)}}\hspace{.4mm})\right]\cdot 
\left[(p_2\circ H(u,-))\cdot (p_3\circ\azul{\overline{H(u,-)}\hspace{.4mm}})\right]\cdot\cdots\cdot\\ & \hspace{5mm} 
\left[(p_{s-1}\circ H(u,-))\cdot (p_s\circ\azul{\overline{H(u,-)}}\hspace{.4mm})\right]\cdot 
\left[(p_s\circ H(u,-))\cdot (p_1\circ\azul{\overline{K(v,-)}}\hspace{.4mm})\right]\cdot\\ & \hspace{5mm} 
\left[(p_1\circ K(v,-))\cdot (p_2\circ\azul{\overline{K(v,-)}}\hspace{.4mm})\right]\cdot\cdots\cdot
\left[(p_{s-1}\circ K(v,-))\cdot (p_s\circ\azul{\overline{K(v,-)}}\hspace{.4mm})\right]
\end{align*}
defines a local section to $e_{2s,s}^f$ over $U\times (V\cap s^{-1}(U))$. \azul{Here, $p_i\colon X^s\to X$ stands for the $i$-th projection and, as in Remark~\ref{remark:tc-path}, $\overline{\tau}$ stands for the path $\tau$ traversed in the opposite direction.}  \azul{The conclusion then follows from} item (7) of Lemma~\ref{lem:proper-sec-secat}.
\end{proof}

\blue{The} estimate (\ref{general-estimate}) is \blue{sharp under special conditions:}

\begin{corollary}\label{cor:tc2s-s-contractible}
Let $f:X\to Y$ be a map \azul{with $X$ contractible and} $Y^s$ normal. Then
\[\text{TC}_{2s,s}(f)=\text{sec}(f^s).\]
\end{corollary}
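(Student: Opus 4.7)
The plan is to prove the equality by a sandwich argument, combining the general upper bound of Proposition~\ref{general-upper-estimate} with the general lower bound of Proposition~\ref{prop:lower-cat-sec}.

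For the upper bound, I would first note that the contractibility of $X$ passes to $X^s$, so $\mathrm{cat}(X^s)=1$. Substituting into the inequality~(\ref{general-estimate}) with $X^s$ replaced by our contractible $X^s$ gives
\[
\mathrm{TC}_{2s,s}(f)\;\leq\;\mathrm{cat}(X^s)+\mathrm{cat}(X^s)\cdot\mathrm{sec}(f^s)-1\;=\;1+\mathrm{sec}(f^s)-1\;=\;\mathrm{sec}(f^s).
\]

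For the lower bound, I would apply Proposition~\ref{prop:lower-cat-sec} with $r=2s$. Since $s<2s=r$, the first branch of the bound applies, and reading off the term $\mathrm{sec}(f^s)$ from the max yields
\[
\mathrm{TC}_{2s,s}(f)\;\geq\;\mathrm{sec}(f^s).
\]
Combining the two inequalities yields the desired equality.

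The main subtlety I anticipate is a mismatch in the normality hypotheses: Proposition~\ref{general-upper-estimate} requires $X^s\times Y^s$ to be normal, whereas the corollary only assumes $Y^s$ normal. To sidestep this issue I would give a direct construction that uses only the normality of $Y^s$. Fix a contraction $H\colon X^s\times[0,1]\to X^s$ from the identity to a constant map at some point $(x_0,\ldots,x_0)$. Given an open cover $\{V_i\}_{i=1}^{\mathrm{sec}(f^s)}$ of $Y^s$ with local sections $\tau_i\colon V_i\to X^s$ of $f^s$, form the open cover $\{X^s\times V_i\}$ of $X^s\times Y^s$. On each $X^s\times V_i$ a local section of $e_{2s,s}^f$ is obtained by concatenating, coordinate by coordinate, the contraction paths from $x_0$ to the coordinates of $u\in X^s$ with the contraction paths from $x_0$ to the coordinates of $\tau_i(v)\in X^s$. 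This furnishes a $2s$-multipath in $X$ with common starting point $x_0$ whose end-tuple is $(u,\tau_i(v))$, proving $\mathrm{TC}_{2s,s}(f)\leq\mathrm{sec}(f^s)$ without invoking the normality of $X^s\times Y^s$.
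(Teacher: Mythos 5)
Your argument is correct and matches the paper's proof, which is exactly the sandwich of Proposition~\ref{general-upper-estimate} (giving $\mathrm{TC}_{2s,s}(f)\leq\mathrm{cat}(X^s)+\mathrm{cat}(X^s)\cdot\mathrm{sec}(f^s)-1=\mathrm{sec}(f^s)$ when $\mathrm{cat}(X^s)=1$) and Proposition~\ref{prop:lower-cat-sec} (giving $\mathrm{TC}_{2s,s}(f)\geq\mathrm{sec}(f^s)$). Your observation about the normality hypotheses is astute, and your direct construction is precisely what the proof of Proposition~\ref{general-upper-estimate} specializes to when $\mathrm{cat}(X^s)=1$: since the cover of $X^s$ has a single element, item~(7) of Lemma~\ref{lem:proper-sec-secat} is applied vacuously, so in fact no normality hypothesis at all is needed for this corollary.
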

\begin{proof}
\azul{Use Propositions~\ref{prop:lower-cat-sec} and Proposition~\ref{general-upper-estimate}.
}\end{proof}

\azul{Relative sectional numbers $\text{sec}^{-}(-)$ can also be used to draw} estimates. Specifically, \blue{item (4) of} Lemma~\ref{lem:proper-sec-secat} \azul{yields:}

\begin{proposition}
For any map $f:X\to Y$, we have \[\text{TC}_{r,s}(f)\leq\text{sec}(f^s)\cdot\text{sec}^{1_{X^{r-s}}\times f^s}(e^X_{r}).\] 
\end{proposition}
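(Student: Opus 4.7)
The plan is to express $e_{r,s}^f$ as a composition and apply the sectional number version of a composition bound. Recall from the beginning of Section~\ref{htcf} that, by definition, $e_{r,s}^f = (1_{X^{r-s}}\times f^s)\circ e_r^X$, so that
$$
\text{TC}_{r,s}(f) \;=\; \text{sec}\bigl(e_{r,s}^f\bigr) \;=\; \text{sec}\bigl((1_{X^{r-s}}\times f^s)\circ e_r^X\bigr).
$$

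The first step is to apply item (4) of Lemma~\ref{lem:proper-sec-secat} with the roles of the maps played by $e_r^X\colon X^{J_r}\to X^r$ and $1_{X^{r-s}}\times f^s\colon X^r\to X^{r-s}\times Y^s$. This yields the inequality
$$
\text{sec}\bigl((1_{X^{r-s}}\times f^s)\circ e_r^X\bigr) \;\leq\; \text{sec}\bigl(1_{X^{r-s}}\times f^s\bigr)\cdot\text{sec}^{1_{X^{r-s}}\times f^s}\bigl(e_r^X\bigr).
$$

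The second (and final) step is to simplify the first factor on the right hand-side. By item (8) of Lemma~\ref{lem:proper-sec-secat}, the sectional number of a product with an identity map equals the sectional number of the non-identity factor, hence $\text{sec}(1_{X^{r-s}}\times f^s) = \text{sec}(f^s)$. Combining the two displays completes the argument. There is no real obstacle here; the result is a direct repackaging of Lemma~\ref{lem:proper-sec-secat}(4) together with the stability of sectional numbers under product with an identity.
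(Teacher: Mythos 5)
Your proof is correct and follows precisely the route the paper indicates: apply item (4) of Lemma~\ref{lem:proper-sec-secat} to the composition $e_{r,s}^f = (1_{X^{r-s}}\times f^s)\circ e_r^X$, and then use item (8) to simplify $\text{sec}(1_{X^{r-s}}\times f^s)$ to $\text{sec}(f^s)$. The paper leaves these details implicit, citing only item (4); your write-up makes the second (equally necessary) step explicit, which is a helpful completion rather than a departure.
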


\begin{corollary}\label{cor:TCr-s-sec}
Let $f:X\to Y$ be a \blue{map.}
\begin{enumerate}
    \item If $f$ admits a section, $\text{TC}_{r,s}(f)=\text{sec}^{1_{X^{r-s}}\times f^s}(e^X_{r}).$
    \item If $X$ is contractible, $\text{TC}_{r,s}(f)=\text{sec}(f^s)$.
\end{enumerate}
\end{corollary}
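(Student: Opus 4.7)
The plan is to sandwich $\TC_{r,s}(f)$ between the upper bound
\[
\TC_{r,s}(f)\leq\mathrm{sec}(f^s)\cdot\mathrm{sec}^{1_{X^{r-s}}\times f^s}(e^X_r)
\]
provided by the proposition just above, and the lower bound
\[
\TC_{r,s}(f)\geq\max\{\mathrm{sec}(f^s),\,\mathrm{sec}^{1_{X^{r-s}}\times f^s}(e^X_r)\}
\]
furnished by Proposition~\ref{prop:lower-cat-sec} (valid uniformly for $s\leq r$, with the convention $X^0=\{\mathrm{pt}\}$ when $s=r$). In both items of the corollary, the idea is to force one of the two factors on the right of the upper bound to equal~$1$, so that the upper and lower bounds collapse to the same value.

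For item~(1), assume $\sigma\colon Y\to X$ is a section of $f$. Then $\sigma^s\colon Y^s\to X^s$ is a section of $f^s$, giving $\mathrm{sec}(f^s)=1$. The upper bound therefore reduces to $\mathrm{sec}^{1_{X^{r-s}}\times f^s}(e^X_r)$, which also appears on the lower bound, so equality holds and (1) is proved.

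For item~(2), I would argue $\mathrm{sec}^{1_{X^{r-s}}\times f^s}(e^X_r)=1$, i.e., that $e^X_r$ admits a global section when $X$ is contractible; the upper bound then reduces to $\mathrm{sec}(f^s)$, again matching the lower bound. By Lemma~\ref{lem:sec-composite} it suffices to check $\mathrm{sec}(e^X_r)=1$. Choose a contraction $h\colon X\times I\to X$ with $h(x,0)=x$ and $h(x,1)=x_0$ for all $x\in X$, and define $\tau\colon X^r\to X^{J_r}$ by sending $(x_1,\ldots,x_r)$ to the multipath whose $i$-th strand is $t\mapsto h(x_i,1-t)$. All strands share the common starting point $x_0$ (at $t=0$) and end at $x_i$ (at $t=1$), so $\tau$ is a well-defined continuous section of $e^X_r$. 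This completes (2).

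There is no real obstacle once the preceding proposition and Proposition~\ref{prop:lower-cat-sec} are in hand; the only content is recognizing that a section of $f$ makes $\mathrm{sec}(f^s)$ trivial, while contractibility of $X$ makes $\mathrm{sec}(e^X_r)$ trivial via an explicit construction of $\tau$.
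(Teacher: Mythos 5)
Your proof is correct and follows essentially the same route as the paper: sandwich $\TC_{r,s}(f)$ between the product upper bound from the preceding proposition and the $\max$ lower bound from Proposition~\ref{prop:lower-cat-sec}, then kill one factor. The only cosmetic difference is in item~(2), where the paper simply invokes the chain $\mathrm{sec}^{1_{X^{r-s}}\times f^s}(e^X_r)\leq\mathrm{sec}(e^X_r)=\TC_r(X)=1$ from Lemma~\ref{lem:sec-composite} (since $\TC_r$ of a contractible space is $1$), whereas you unwind that last equality by explicitly constructing a global section $\tau$ of $e^X_r$ from a contraction of $X$; both are the same observation.
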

\begin{proof}
Recall the lower \azul{estimate} $\max\{\text{sec}(f^s),\text{sec}^{1_{X^{r-s}}\times f^s}(e^X_{r})\}\leq \text{TC}_{r,s}(f)$ \azul{in} Proposition~\ref{prop:lower-cat-sec} and the upper estimate $\text{sec}^{1_{X^{r-s}}\times f^s}(e^X_{r})\leq\text{TC}_r(X)$ \azul{coming from} Lemma~\ref{lem:sec-composite}. 
\end{proof}

\azul{Note that item} (2) of Corollary~\ref{cor:TCr-s-sec} generalizes Corollary~\ref{cor:tc2s-s-contractible}.

\begin{example}
Let $f:X\to Y$ be a map. If $f$ admits a section, then $$\azul{\text{TC}_r(Y)}=\text{TC}_{r,r}(f)=\text{sec}^{f^r}(e^X_{\azul{r}}).$$ The \azul{former} equality follows from \blue{item \azul{(2)} of} Proposition~\ref{prop:tcrr-map-with-section}.
\end{example}

\subsection{Higher complexity of a fibration}
We \azul{now} \blue{obtain \azul{new} estimates} \azul{for} $\text{TC}_{r,s}(f)$ when $f$ is a fibration. Firstly, we restate the definition of $\text{TC}_{r,s}(f)$ (for $s<r$) in more geometric terms. Recall that a \textit{deformation} of $U\subset Z$ in $Z$ to a subset $V\subset Z$ is a map $H:U\times I\to Z$ such that $H(u,0)=u$ \blue{and} $H(u,1)\in V$\blue{,} for all $u\in U$.

\begin{proposition}
Let $f:X\to Y$ be a fibration, and let $U\subset X^{r-s}\times Y^s$ \azul{with} $s<r$. \azul{The} following statements are equivalent:
\begin{enumerate}
    \item \azul{There is} a local section $\sigma:U\to X^{J_r}$ \azul{for $e^f_{r,s}$.}
    \item $U$ can be deformed in $X^{r-s}\times Y^s$ to the subset
    \begin{equation}\label{thesubset}\blue{\Delta_f=\{(x,\ldots,x,f(x),\ldots,f(x))\in X^{r-s}\times Y^s:~x\in X\}.}\end{equation}
\end{enumerate}
\end{proposition}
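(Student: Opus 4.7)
My plan is to prove the two implications separately. The direction (1)$\Rightarrow$(2) is direct: given a local section $\sigma\colon U\to X^{J_r}$, write $\sigma(u)=(\gamma_1(u),\ldots,\gamma_r(u))$ as $r$ paths in $X$ sharing the common starting point $x(u):=\sigma(u)(0)$, with $\gamma_i(u)(1)=x_i$ for $i\leq r-s$ and $f(\gamma_{r-s+j}(u)(1))=y_j$ for $j\leq s$, where $u=(x_1,\ldots,x_{r-s},y_1,\ldots,y_s)$. Then I would define $H\colon U\times I\to X^{r-s}\times Y^s$ by
\[
H(u,t)=\bigl(\gamma_1(u)(1-t),\ldots,\gamma_{r-s}(u)(1-t),\,f(\gamma_{r-s+1}(u)(1-t)),\ldots,f(\gamma_r(u)(1-t))\bigr).
\]
This is continuous in $(u,t)$, reduces to the inclusion at $t=0$, and at $t=1$ takes the value $(x(u),\ldots,x(u),f(x(u)),\ldots,f(x(u)))\in\Delta_f$.

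For the converse (2)$\Rightarrow$(1), suppose $H\colon U\times I\to X^{r-s}\times Y^s$ is a deformation from $\mathrm{incl}_U$ with $H(u,1)\in\Delta_f$. Since the first $r-s$ coordinates of $H(u,1)$ coincide, I can define continuously $x\colon U\to X$ by $x(u):=p_1(H(u,1))$; this is the place where the hypothesis $s<r$ is used, because I need at least one $X$-coordinate to extract the common basepoint in $X$. Reading the first $r-s$ coordinate paths backwards yields, for $i\leq r-s$, continuous families of paths $\beta_i(u)\in PX$ defined by $\beta_i(u)(t)=p_i(H(u,1-t))$, starting at $x(u)$ and ending at $x_i$. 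Likewise, the last $s$ coordinates give continuous families of paths $\delta_j(u)\in PY$, $\delta_j(u)(t)=p_{r-s+j}(H(u,1-t))$, starting at $f(x(u))$ and ending at $y_j$.

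The remaining step, which is the crux, is to continuously lift each $\delta_j(u)$ to a path in $X$ starting at $x(u)$. Here I invoke the hypothesis that $f$ is a fibration: by the equivalence recorded in~(\ref{lifun}), $f$ admits a lifting function $\Gamma\colon E_f\to PX$. Since $\delta_j(u)(0)=f(x(u))$, the assignment $u\mapsto(x(u),\delta_j(u))$ lands in $E_f$, so $\alpha_{r-s+j}(u):=\Gamma(x(u),\delta_j(u))\in PX$ starts at $x(u)$ and satisfies $f\circ\alpha_{r-s+j}(u)=\delta_j(u)$; in particular it ends at a preimage of $y_j$. Since all of $\beta_1(u),\ldots,\beta_{r-s}(u),\alpha_{r-s+1}(u),\ldots,\alpha_r(u)$ share the starting point $x(u)$, they assemble into a continuous map $\sigma\colon U\to X^{J_r}$ with $e^f_{r,s}\circ\sigma=\mathrm{incl}_U$, completing the proof. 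The main obstacle is precisely this continuous lifting; everything else is just bookkeeping with coordinates and path reversals.
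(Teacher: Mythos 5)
Your proof is correct and follows essentially the same route as the paper: the forward direction uses the same deformation built from the multipath $\sigma$, and the converse uses the same construction — extract the common basepoint $x(u)=p_1(H(u,1))$ from the $X$-coordinates (using $s<r$), read the coordinate paths of $H$ backwards, and apply a lifting function $\Gamma$ of the fibration $f$ to lift the $Y$-coordinate paths to $X$. Your notation with $\beta_i$, $\delta_j$, $\alpha_{r-s+j}$ is just a more spelled-out version of the paper's one-line formula $\sigma(u)=\bigl(p_1\circ\overline{H(u,-)},\ldots,\Gamma(\ast,p_r\circ\overline{H(u,-)})\bigr)$.
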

\begin{proof}
$(1)\Longrightarrow (2)$. \azul{The homotopy} $H:U\times \azul{[0,1]}\to X^{r-s}\times Y^s$ \azul{sending $(u,t)$ to}
\[\left(\rule{0mm}{4mm}{\sigma}(\azul{u)((1-t)_1}),\ldots,{\sigma}(\azul{u)((1-t)_{r-s}}),f({\sigma}(\azul{u)((1-t)_{r-s+1}})),\ldots,\azul{f}({\sigma}(\azul{u,(1-t)_r}))\right)\] deforms $U$ in $X^{r-s}\times Y^s$ to $\Delta_f$\azul{. Here, for $x\in[0,1]$, the notation $x_i$ stands for the copy of $x$ lying in the $i$-th wedge summand of $[0,1]$ in $J_r$.}

$(2)\Longrightarrow (1)$. Let $p_i$ denote the projection to the $i$\azul{-th} factor and \azul{choose a lifting function} $\Gamma:\azul{E_f}\to \azul{PX}$ of the fibration $f$ \azul{as in~(\ref{lifun}).} Given a deformation $H:U\times \azul{[0,1]}\to X^{r-s}\times Y^s$ \azul{of $U$ to $\Delta_f$,} we define a section $\sigma:U\to X^{J_r}$ \azul{for $e^f_{r,s}$} by
\[
\sigma(\azul{u}) = \left(p_1\azul{{}\circ{}}\azul{\overline{H(\azul{u},-)}},\ldots,p_{r-s}\azul{{}\circ{}}\azul{\overline{H(\azul{u},-)}},\Gamma(\ast,p_{r-s+1}\azul{{}\circ{}}\azul{\overline{H(\azul{u},-)}\hspace{.4mm})},\ldots,\Gamma(\ast,p_{r}\azul{{}\circ{}}\azul{\overline{H(\azul{u},-)}}\hspace{.4mm})\right)\hspace{-.9mm}, 
\] where $\ast=p_1(H(\azul{u},1))=\cdots=p_{r-s}(H(\azul{u},1))$ (here we use \azul{the hypothesis} $s<r$) \azul{ and $\overline{\tau}$ stands for the path $\tau$ traversed in the opposite direction.} 
\end{proof}

\begin{corollary}
If $f:X\to Y$ is a fibration and $s<r$, then $\text{TC}_{r,s}(f)$ equals the minimal number of elements of a covering of $X^{r-s}\times Y^s$ by open sets that can be deformed in $X^{r-s}\times Y^s$ to the set \blue{in~(\ref{thesubset}).}
\end{corollary}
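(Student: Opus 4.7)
The plan is to observe that this corollary is essentially a rewording of the preceding proposition combined with the definition of $\TC_{r,s}(f)$. By Definition~\ref{strong-higher-tc}(1), $\TC_{r,s}(f)=\text{sec}(e^f_{r,s})$ is the least integer $m$ such that $X^{r-s}\times Y^s$ admits an open cover $\{U_1,\ldots,U_m\}$ where each $U_i$ carries a local section $\sigma_i\colon U_i\to X^{J_r}$ of $e^f_{r,s}$.

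The preceding proposition asserts, under the hypotheses that $f$ is a fibration and $s<r$, that the existence of such a local section over an open set $U\subset X^{r-s}\times Y^s$ is equivalent to the deformability of $U$ in $X^{r-s}\times Y^s$ to the subset $\Delta_f$ described in~(\ref{thesubset}). Hence one can replace, open set by open set, ``admits a local section of $e^f_{r,s}$'' with ``can be deformed in $X^{r-s}\times Y^s$ to $\Delta_f$'' without changing which covers are admissible.

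Concretely, I would first take an optimal open cover $\{U_1,\ldots,U_m\}$ realizing $\TC_{r,s}(f)=m$, apply the implication $(1)\Longrightarrow(2)$ of the proposition to each $U_i$, obtaining a cover of $X^{r-s}\times Y^s$ by $m$ open sets each deformable to $\Delta_f$. This shows that the geometric invariant is at most $\TC_{r,s}(f)$. Conversely, given an optimal cover $\{V_1,\ldots,V_n\}$ of $X^{r-s}\times Y^s$ by open sets deformable to $\Delta_f$, the implication $(2)\Longrightarrow(1)$ of the proposition produces local sections of $e^f_{r,s}$ on each $V_j$, so $\TC_{r,s}(f)\leq n$. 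Both inequalities together yield the claimed equality.

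There is no real obstacle here, as the entire content has already been packaged in the proposition; the corollary amounts to translating the pointwise equivalence into an equivalence of covering numbers. The only subtlety worth flagging is that both the hypothesis that $f$ is a fibration and the inequality $s<r$ must be preserved, since the construction of a section from a deformation in the proof of $(2)\Longrightarrow(1)$ relies on both (a lifting function for $f$, and the existence of at least one configuration coordinate to identify the basepoint $\ast$).
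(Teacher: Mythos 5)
Your argument is exactly the intended one: the corollary is an immediate translation of the preceding proposition from a single open set to an open cover, and the paper accordingly states it without further proof. Your flag about preserving both hypotheses (fibration and $s<r$) is an accurate reading of where the proposition's $(2)\Longrightarrow(1)$ direction uses them.
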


\begin{proposition}\label{estimates-tcrs}
If $f$ is a fibration then\blue{:}
\begin{enumerate}
    \item $\text{cat}(X^{r-s-1}\times Y^s)\leq\text{TC}_{r,s}(f)\leq\text{cat}(X^{r-s}\times Y^s)$\blue{,} for $s<r$.
    \item $\text{cat}(Y^{r-1})\leq\text{TC}_{r,r-1}(f)\leq\min\{\text{TC}_r(Y),\text{cat}(X\times Y^{r-1})\}.$
    \item $\max\{\text{sec}(f^r),\text{TC}_r(Y)\}\leq\text{TC}_{r,r}(f)\leq\text{cat}(Y^r)$. 
\end{enumerate}
\end{proposition}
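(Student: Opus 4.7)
\medskip

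My plan is to assemble the proposition directly from the tools already developed earlier in the excerpt; there is no essentially new argument required. First, I observe that since $f$ is a fibration, Remark~\ref{rem:fibration-fibration}(2) guarantees that every map of the form $e_{r,s}^f\colon X^{J_r}\to X^{r-s}\times Y^s$ is itself a fibration. This is the key structural fact that allows item~(5) of Lemma~\ref{lem:proper-sec-secat} (the bound $\text{sec}(p)\leq\text{cat}(B)$ for a fibration) to be applied to $e_{r,s}^f$ at each value of~$s$.

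For the lower bounds, I would just quote Proposition~\ref{prop:lower-cat-sec}. Indeed, for $s<r$ that proposition already gives $\text{cat}(X^{r-s-1}\times Y^s)\leq\text{TC}_{r,s}(f)$, which covers item~(1) and (taking $s=r-1$, so that $X^{r-s-1}=X^0$ is a single point and collapses out of the product) also gives $\text{cat}(Y^{r-1})\leq\text{TC}_{r,r-1}(f)$ in item~(2). For $s=r$, Proposition~\ref{prop:lower-cat-sec} yields $\max\{\text{sec}(f^r),\TC_r(Y)\}\leq\TC_{r,r}(f)$, which is the lower bound in item~(3).

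For the upper bounds, applying item~(5) of Lemma~\ref{lem:proper-sec-secat} to the fibration $e_{r,s}^f$ gives
\[
\TC_{r,s}(f)=\text{sec}(e_{r,s}^f)\leq\text{cat}(X^{r-s}\times Y^s),
\]
which yields the upper bounds in items~(1) and~(3) directly (in the case of~(3), the base is simply $Y^r$). The second upper bound in item~(2) is the special case $s=r-1$ of this same inequality, namely $\TC_{r,r-1}(f)\leq\text{cat}(X\times Y^{r-1})$. The remaining upper bound $\TC_{r,r-1}(f)\leq\TC_r(Y)$ in item~(2) has already been recorded as Corollary~\ref{prop:tcr-r-1-fibration}.

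There is no genuine obstacle here: all ingredients are in place, and the only subtlety worth flagging explicitly is the use of Remark~\ref{rem:fibration-fibration}(2) to guarantee that $e_{r,s}^f$ is a fibration, which legitimizes the application of Lemma~\ref{lem:proper-sec-secat}(5). I would therefore present the proof as a short paragraph simply citing Proposition~\ref{prop:lower-cat-sec}, Remark~\ref{rem:fibration-fibration}(2), Lemma~\ref{lem:proper-sec-secat}(5), and Corollary~\ref{prop:tcr-r-1-fibration}.
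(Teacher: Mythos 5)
Your proposal is correct and follows exactly the same route as the paper: the fibration property of $e_{r,s}^f$ from Remark~\ref{rem:fibration-fibration}(2), the bound $\mathrm{sec}(p)\leq\mathrm{cat}(B)$ from Lemma~\ref{lem:proper-sec-secat}(5) for all upper bounds, Corollary~\ref{prop:tcr-r-1-fibration} for the $\mathrm{TC}_r(Y)$ bound in item~(2), and Proposition~\ref{prop:lower-cat-sec} for all lower bounds. Nothing further to add.
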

\begin{proof}
   Because $f$ is a fibration, \azul{the map} $e_{r,s}^f:X^{J_r}\to X^{r-s}\times Y^s$ is a fibration \azul{too} (see item~(2) of Remark~\ref{rem:fibration-fibration}). Then, by item (5) of Lemma~\ref{lem:proper-sec-secat}, we obtain $\text{TC}_{r,s}(f)=\text{sec}(e_{r,s}^f)\leq \text{cat}\left(X^{r-s}\times Y^s\right)$ for any $1\leq s\leq r$.
  
In addition, from \azul{Corollary}~\ref{prop:tcr-r-1-fibration}, we get that $\text{TC}_{r,r-1}(f)\leq \text{TC}_r(Y)$. \azul{All the} lower estimates follow from Proposition~\ref{prop:lower-cat-sec}.   
\end{proof}

\blue{T}he upper estimate $\azul{\TC_{r,s}(f)\leq{}}\text{cat}(X^{r-s}\times Y^s)$ for \azul{$s\leq r$} in Proposition~\ref{estimates-tcrs} is \blue{sharp under special conditions} (see Corollary~\ref{cor:tc-equal-cat} below). However, \blue{there is room for improvement\azul{, as it can be seen from Corollary~\ref{h-space} below and, in particular, from Remark~\ref{evenso} in the final section of the paper, where the upper estimate in item~(2)} of Proposition~\ref{estimates-tcrs} becomes sharp due to the $\TC_r$ term.}

\begin{corollary}\label{cor:tc-equal-cat}
Let $f:X\to Y$ be a fibration \azul{and} assume that $X$ is contractible. Then
\[\text{TC}_{r,s}(f)=\text{cat}(Y^s)=\text{sec}(f^s)\blue{,} \text{ for any } s\leq r.\]
\end{corollary}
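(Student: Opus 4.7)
The strategy is to split the asserted chain of equalities into two parts, handling each with the general tools already assembled. First, since $X$ is contractible, item~(2) of Corollary~\ref{cor:TCr-s-sec} immediately yields
\[
\text{TC}_{r,s}(f)=\text{sec}\hspace{.1mm}(f^s),
\]
so the entire content of the corollary reduces to verifying the single identity $\text{sec}\hspace{.1mm}(f^s)=\text{cat}(Y^s)$.

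For that identity, the plan is to exploit two facts simultaneously: that $f^s\colon X^s\to Y^s$ is a fibration (as a product of fibrations), and that $X^s$ is contractible (as a product of contractibles). The first gives, via item~(5) of Lemma~\ref{lem:proper-sec-secat}, the upper bound
\[
\text{sec}\hspace{.1mm}(f^s)\leq\text{cat}(Y^s).
\]
For the matching lower bound, the key observation is that contractibility of $X^s$ forces $f^s$ to be null-homotopic (it factors through a point up to homotopy), so item~(6) of Lemma~\ref{lem:proper-sec-secat} gives
\[
\text{secat}\hspace{.1mm}(f^s)=\text{cat}(Y^s).
\]
Finally, the fibration property of $f^s$ ensures $\text{secat}\hspace{.1mm}(f^s)=\text{sec}\hspace{.1mm}(f^s)$ (as recalled in Section~\ref{secprelim}), closing the loop.

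There is no genuine obstacle here: every step invokes a property already proved. The only mild subtleties are the routine verifications that products of fibrations are fibrations and that products of contractibles are contractible, and the observation that a map from a contractible space is null-homotopic. Once these are in place, the equalities $\text{TC}_{r,s}(f)=\text{sec}\hspace{.1mm}(f^s)=\text{cat}(Y^s)$ follow by concatenating Corollary~\ref{cor:TCr-s-sec}(2) with items~(5) and~(6) of Lemma~\ref{lem:proper-sec-secat}.
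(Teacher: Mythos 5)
Your proof is correct. Since the paper gives no explicit proof of this corollary, I'll just verify your steps land. Corollary~\ref{cor:TCr-s-sec}(2) indeed gives $\text{TC}_{r,s}(f)=\text{sec}(f^s)$ from contractibility of $X$ alone (no fibration hypothesis needed for that step). For the second equality, $f^s$ is a fibration, $X^s$ is contractible, hence $f^s$ is null-homotopic, so item~(6) of Lemma~\ref{lem:proper-sec-secat} gives $\text{secat}(f^s)=\text{cat}(Y^s)$, and the fibration property upgrades this to $\text{sec}(f^s)=\text{cat}(Y^s)$. This is the natural route given the paper's machinery, and it handles $s\leq r$ uniformly, whereas a squeeze via item~(1) of Proposition~\ref{estimates-tcrs} (the result immediately preceding the corollary) would only cover $s<r$ directly and need a separate patch for $s=r$.

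One small stylistic point: your invocation of item~(5) of Lemma~\ref{lem:proper-sec-secat} for the upper bound $\text{sec}(f^s)\leq\text{cat}(Y^s)$ is redundant. Once you know $f^s$ is a null-homotopic fibration, item~(6) plus the identity $\text{secat}(f^s)=\text{sec}(f^s)$ already yields the equality $\text{sec}(f^s)=\text{cat}(Y^s)$ in one shot, with no need to establish the two inequalities separately.
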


\begin{example}
If $f:\widetilde{X}\to X$ is \blue{the} universal covering of a spherical space $X$, then $\text{TC}_{r,s}(f)=\text{cat}(X^s)=\text{sec}(f^s)$ for $s\leq r$.
\end{example}

\azul{It is well known that, if} $Y$ is a topological \blue{group, or} more generally an $H$-\blue{space,} then the $r$-higher \blue{topological} complexity of $Y$ coincides \blue{with} $\text{cat}(Y^{r-1})$. \blue{As a consequence:}

\begin{corollary}\label{h-space}
Let $f:X\to Y$ be a \azul{fibration over an $H$-space} $Y$. Then \[\text{TC}_{r,r-1}(f)=\text{cat}(Y^{r-1})=\text{TC}_r(Y).\]
\end{corollary}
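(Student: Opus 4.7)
The plan is to combine the known computation of higher topological complexity for $H$-spaces with the general estimate already established for fibrations in item (2) of Proposition~\ref{estimates-tcrs}. Specifically, that proposition gives us, for any fibration $f\colon X\to Y$, the sandwich
\[
\text{cat}(Y^{r-1})\leq \text{TC}_{r,r-1}(f)\leq \text{TC}_r(Y).
\]
Thus the whole corollary will follow if we can identify the outer terms.

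The identification of the outer terms is exactly the content of the well-known $H$-space formula: if $Y$ admits a (not necessarily associative, not necessarily unital up to strict equality) multiplication with homotopy unit, then $\text{TC}_r(Y)=\text{cat}(Y^{r-1})$. I would just quote this theorem (it appears, for instance, in Lupton-Scherer-style arguments and generalizes Farber's classical $r=2$ identity $\text{TC}(Y)=\text{cat}(Y)$ for $H$-spaces), so there is nothing further to prove. Plugging this equality into the sandwich above collapses both inequalities to equalities, yielding simultaneously $\text{TC}_{r,r-1}(f)=\text{cat}(Y^{r-1})$ and $\text{TC}_{r,r-1}(f)=\text{TC}_r(Y)$, which is the statement of the corollary.

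There is no real obstacle here: the entire work has already been done in Proposition~\ref{estimates-tcrs}(2), and the $H$-space hypothesis only enters through the standard formula $\text{TC}_r(Y)=\text{cat}(Y^{r-1})$, so the proof amounts to a one-line citation followed by the sandwich argument. The only thing worth emphasizing in the write-up is that the upper bound $\text{TC}_{r,r-1}(f)\leq\text{TC}_r(Y)$ (which is the non-trivial half of the estimate, coming via Corollary~\ref{prop:tcr-r-1-fibration} and ultimately from Lemma~\ref{lemma:pullback}) is what makes the $H$-space hypothesis suffice; the companion upper bound $\text{cat}(X\times Y^{r-1})$ from Proposition~\ref{estimates-tcrs}(2) is irrelevant here, since it could in principle be larger than $\text{cat}(Y^{r-1})$ when $X$ is topologically complicated.
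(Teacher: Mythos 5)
Your argument is correct and is essentially identical to the paper's: the corollary is obtained by specializing the sandwich $\text{cat}(Y^{r-1})\leq \text{TC}_{r,r-1}(f)\leq \text{TC}_r(Y)$ from item (2) of Proposition~\ref{estimates-tcrs} and then collapsing it via the standard $H$-space identity $\text{TC}_r(Y)=\text{cat}(Y^{r-1})$.
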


\begin{example}
If $f:X\to Y$ is a fibration \azul{with} a section\azul{, then ~(\ref{versionS}), item~(4) in Proposition~\ref{lem:proper-sec-secat} and item~(2) in Proposition~\ref{estimates-tcrs} yield} $\text{TC}_{r,r-1}(f)=\text{TC}_r(Y)=\text{sec}^{\azul{1_X\times{}}f^{r-1}}(e^X_{r})$.
\end{example}

\begin{remark}\label{monorefinado}
Item (2) of Proposition~\ref{estimates-tcrs} \blue{together with} 
Proposition\blue{s}~\azul{\ref{prop:ineq} \blue{and~\ref{prop:lower-cat-sec}}} yield $$\azul{\TC_r(Y)\leq \text{TC}_{r,r}(f)\leq\text{TC}_{r+1,r}(f)\leq \text{TC}_{r+1}(Y)\blue{,}}$$ for any fibration $f:X\to Y$.
\end{remark}

\subsection{Cohomological lower bound}

\azul{{\v{S}}varc's} cohomological lower bound for the sectional category \azul{of a map, a tool widely used \blue{in computations, arises as follows.}} \azul{A multiplicative cohomology theory} \blue{$h^\ast$} on the homotopy category of \blue{pairs} of spaces \azul{comes equipped with a} relative cohomology product \[\cup:h^\ast(X,A)\otimes h^\ast(X,B)\to h^\ast(X,A\cup B)\] \azul{whenever} $A,B\subset X$ are excisive. In our case, $A$ and $B$ \azul{will be} open sets. \azul{On the other hand, consider the \textit{index of nilpotence}
$$\text{nil}(S)=\min\{n:~\text{every product of $n$ elements \blue{in $S$ vanishes}}\}$$
defined for a subset $S$ of a ring $R$.}

\begin{lemma}[{\azul{\cite[Theorem 4 on page 73]{schwarz1966}}}]\label{lem:cohomo-secat}
For any map $f:X\to Y$, we \azul{have}
\[\text{nil}\left(\text{Ker}(f^\ast:h^\ast(Y)\to h^\ast(X))\right)\leq\text{secat}(f).\]
\end{lemma}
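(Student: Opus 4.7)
The plan is to follow the standard {\v{S}}varc approach using the relative cup product of a multiplicative cohomology theory. Set $n=\text{secat}(f)$ and let $\{U_1,\ldots,U_n\}$ be an open cover of $Y$ such that each inclusion $\iota_i\colon U_i\hookrightarrow Y$ admits a homotopy lift through $f$, i.e., there is $s_i\colon U_i\to X$ with $f\circ s_i\simeq \iota_i$. The goal is to show that any product $\alpha_1\cup\cdots\cup\alpha_n$ of $n$ classes in $\text{Ker}(f^\ast)\subset h^\ast(Y)$ must vanish.

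First I would exploit the local homotopy sections to lift each $\alpha_i$ to a relative class. Fix $\alpha\in\text{Ker}(f^\ast)$. Since $\iota_i\simeq f\circ s_i$, the homotopy invariance of $h^\ast$ gives
\[
\iota_i^\ast(\alpha)=s_i^\ast\bigl(f^\ast(\alpha)\bigr)=s_i^\ast(0)=0\in h^\ast(U_i).
\]
From the long exact sequence of the pair $(Y,U_i)$,
\[
\cdots\longrightarrow h^\ast(Y,U_i)\stackrel{j_i^\ast}{\longrightarrow} h^\ast(Y)\stackrel{\iota_i^\ast}{\longrightarrow} h^\ast(U_i)\longrightarrow\cdots,
\]
exactness yields a relative class $\widetilde{\alpha}_i\in h^\ast(Y,U_i)$ with $j_i^\ast(\widetilde{\alpha}_i)=\alpha$.

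Next I would use the relative multiplicativity of $h^\ast$. Given classes $\alpha_1,\ldots,\alpha_n\in\text{Ker}(f^\ast)$, choose relative lifts $\widetilde{\alpha}_i\in h^\ast(Y,U_i)$ as above (one lift per $U_i$). The iterated relative cup product assembles to
\[
\widetilde{\alpha}_1\cup\cdots\cup\widetilde{\alpha}_n\;\in\; h^\ast\bigl(Y,\,U_1\cup\cdots\cup U_n\bigr)=h^\ast(Y,Y)=0,
\]
where the equality uses that $\{U_i\}$ covers $Y$ (and the iterated excision/compatibility for the relative cup product, which holds since the $U_i$ are open and hence excisive).

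Finally, naturality of the cup product under the map of pairs $(Y,\varnothing)\hookrightarrow(Y,U_1\cup\cdots\cup U_n)$ says that the induced map $h^\ast(Y,Y)\to h^\ast(Y)$ sends $\widetilde{\alpha}_1\cup\cdots\cup\widetilde{\alpha}_n$ to $\alpha_1\cup\cdots\cup\alpha_n$, so the latter product is zero. Hence every product of $n$ elements of $\text{Ker}(f^\ast)$ vanishes, which by definition means $\text{nil}\bigl(\text{Ker}(f^\ast)\bigr)\leq n=\text{secat}(f)$. The only delicate point is the formal compatibility between the relative cup product and the restriction maps $j_i^\ast$; I would simply quote it from {\v{S}}varc or from a standard reference on multiplicative cohomology theories rather than reprove it here.
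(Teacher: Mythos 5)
Your argument is correct and is precisely the standard {\v{S}}varc relative-cup-product argument; the paper does not reprove this lemma but simply cites it to Schwarz, and your proof faithfully reproduces the cited reasoning (local vanishing of $\iota_i^\ast\alpha$ via the homotopy section, lifting to $h^\ast(Y,U_i)$ by exactness, and killing the $n$-fold product in $h^\ast(Y,\bigcup U_i)=h^\ast(Y,Y)=0$). No gaps.
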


\azul{In our context:}
\begin{proposition}\label{theorem:cohomological-estimate}
For every map $f:X\to Y$ and for every multiplicative cohomology theory $h^\ast$, we have
\[\text{nil}\left(\text{Ker}((\Delta_{r-s},{}^sf)^\ast:h^\ast(X^{r-s}\times Y^s)\to h^\ast(X))\right)\leq\text{HTC}_{r,s}(f),\] where $(\Delta_{r-s},{}^sf):X\to X^{r-s}\times Y^s$ \blue{is} given by  $(\Delta_{r-s},{}^sf)=(1_{X^{r-s}}\times f^s)\circ \Delta_r$\blue{,} with $\Delta_r:X\to X^r,~x\mapsto (x,\ldots,x)$\azul{, the} diagonal map.
\end{proposition}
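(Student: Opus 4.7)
The plan is to reduce this to Schwarz's cohomological lower bound stated in Lemma~\ref{lem:cohomo-secat} by applying it directly to the map $e_{r,s}^f\colon X^{J_r}\to X^{r-s}\times Y^s$. By Definition~\ref{strong-higher-tc}(2) we have $\mathrm{HTC}_{r,s}(f)=\mathrm{secat}(e_{r,s}^f)$, so Lemma~\ref{lem:cohomo-secat} yields
\[
\mathrm{nil}\bigl(\mathrm{Ker}((e_{r,s}^f)^{\ast}\colon h^{\ast}(X^{r-s}\times Y^s)\to h^{\ast}(X^{J_r}))\bigr)\leq \mathrm{HTC}_{r,s}(f).
\]
It therefore suffices to identify the kernel of $(e_{r,s}^f)^{\ast}$ with the kernel of $(\Delta_{r-s},{}^sf)^{\ast}$.

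Next I would exploit the contractibility of the wedge $J_r$, which renders $X^{J_r}$ homotopy equivalent to $X$. Explicitly, the map $c\colon X\to X^{J_r}$ sending $x$ to the constant $r$-multipath at $x$ is a homotopy equivalence, with homotopy inverse given by evaluation at the common starting point $0\in J_r$. A direct verification shows that $e_r^X\circ c=\Delta_r\colon X\to X^r$, so that
\[
e_{r,s}^f\circ c=(1_{X^{r-s}}\times f^s)\circ e_r^X\circ c=(1_{X^{r-s}}\times f^s)\circ \Delta_r=(\Delta_{r-s},{}^sf).
\]
Since $c$ is a homotopy equivalence, $c^{\ast}$ is an isomorphism, and the commutative diagram of induced maps on $h^{\ast}$ implies $\mathrm{Ker}((e_{r,s}^f)^{\ast})$ is taken isomorphically by $c^{\ast}$ onto $\mathrm{Ker}((\Delta_{r-s},{}^sf)^{\ast})$. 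In particular, the indices of nilpotency of the two kernels agree, and the desired inequality follows.

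Essentially no step is obstructive here: the only subtlety is to make sure that the homotopy equivalence $c$ is compatible on the nose (or at least up to homotopy) with the two evaluation/diagonal-type maps, which is straightforward from the definitions. The argument does not require $f$ to be a fibration, since Schwarz's bound in Lemma~\ref{lem:cohomo-secat} applies to any map, and the whole chain of identifications takes place at the level of (homotopy-invariant) cohomology.
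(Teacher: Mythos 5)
Your proof is correct and takes essentially the same route as the paper: apply {\v{S}}varc's bound (Lemma~\ref{lem:cohomo-secat}) to $e_{r,s}^f$, then use the constant-path homotopy equivalence $c$ and the triangle $e_{r,s}^f\circ c=(\Delta_{r-s},{}^sf)$ to identify the kernels. The only cosmetic difference is that the paper works with the ``sequential'' model $PX$ from Remark~\ref{remark:tc-path} while you work with $X^{J_r}$, but these are interchangeable and the argument is identical.
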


\begin{proof}
\azul{In the sequential context, c}onsider \blue{the commutative diagram}
\begin{eqnarray*}
\xymatrix{ PX\ar[dr]_{e^f_{r,s}}  & & X \ar[ll]_{c} \ar[dl]^{(\Delta_{r-s},{}^sf)} \\ & X^{r-s}\times Y^s\blue{,} & } 
\end{eqnarray*} where $c:X\to PX$ is \blue{the} homotopy \blue{equivalence given} by $c(x)=x$\blue{,} the constant path \blue{at~$x$.} \azul{The result follows from Lemma~\ref{lem:cohomo-secat} as} $\text{nil}\left(\text{Ker}((e^f_{r,s})^\ast)\right)=\text{nil}\left(\text{Ker}((\Delta_{r-s},{}^sf)^\ast)\right)$. 
\end{proof}

Although \blue{Proposition~\ref{theorem:cohomological-estimate}} is formulated in general terms, we will mostly \blue{consider cases where \azul{the k\"unneth formula}} $h^\ast(X^{r-s}\times Y^s)\blue{{}\cong{}} {h^\ast(X)}^{\otimes(r-s)}\otimes {h^\ast(Y)}^{\otimes s}$. \azul{In such cases,} the action of $(\Delta_{r-s},{}^sf)^\ast$ on tensors \azul{of factors} $\alpha_1,\ldots,\alpha_{r-s}\in h^\ast(X)$ \azul{and} $\beta_1,\ldots,\beta_s\in h^\ast(Y)$ \azul{is given by the product:}
\[(\Delta_{r-s},{}^sf)^\ast(\alpha_1\otimes\cdots\otimes\alpha_{r-s}\otimes \beta_1\otimes\cdots\otimes\beta_s)=\alpha_1\cdots\alpha_{r-s}\cdot f^\ast(\beta_1)\cdots f^\ast(\beta_s).\]
\blue{In concrete cases \azul{(e.g.~those worked out in Section~\ref{sec:example} below)}} we do not attempt to compute the entire kernel of the homomorphism
$(\Delta_{r-s},{}^sf)^\ast$\azul{,} but we rather look for specific elements in the kernel and try to find long
non-trivial products.

\section{Examples}\label{sec:example}

\subsection{The complexity $\text{TC}_{r,s}(\azul{p_n}:S^n\to\mathbb{R}P^n)$}\label{covering-map}
Recall \azul{from \cite[Corollary~3.12]{BGRT}} the higher \azul{topological }complexity of the \azul{$n$-th} sphere \azul{$S^n$, $n\geq1$:}
\begin{equation}\label{tcresferas}
\text{TC}_r(S^n)=\begin{cases}
r, & \hbox{ if $n$ is odd,}\\
r+1, & \hbox{ if $n$ is even}.
\end{cases}
\end{equation}

Consider the \blue{usual double} covering map $\azul{p_n}:S^n\to\mathbb{R}P^n$. \azul{Since $\text{cat}(S^n)=2$ and $\text{cat}(\mathbb{R}P^n)=n+1$,} Proposition~\ref{estimates-tcrs} \azul{and the subadditivity of the Lusternik-Schnirelmann category yield the upper estimate}
\begin{equation}\label{estimacionpordebajo}
\text{TC}_{r,s}(\azul{p_n})\leq sn+r-s+1 \text{\blue{, \ } for any }  s\leq r.
\end{equation}
\azul{For a lower estimate, start by noticing} that $\azul{p_n}^\ast:H^\ast(\mathbb{R}P^n;\mathbb{Z}_2)\to H^\ast(S^n;\mathbb{Z}_2)$ is trivial \azul{in positive dimensions.} Set $\iota\in H^n(S^n;\mathbb{Z}_2)$\blue{, the} fundamental class of the sphere~$S^n$\blue{, and let} $\alpha\in H^1(\mathbb{R}P^n;\mathbb{Z}_2)$ \blue{be} the generator of the cohomology ring $H^\ast(\mathbb{R}P^n;\mathbb{Z}_2)=\mathbb{Z}_2[\alpha]/\left(\alpha^{n+1} \right)$. Set $v_i=q^\ast_i\alpha\in H^{\azul{1}}((S^n)^{r-s}\times (\mathbb{R}P^n)^s;\mathbb{Z}_2)$\azul{,} where $q_i:\azul{(S^n)}^{r-s}\times \azul{(\mathbb{R}P^n)}^s\to \azul{\mathbb{R}P^n}$ \azul{is the} projection onto the $i$\azul{-}th factor \azul{($r-s+1\leq i\leq r$).} Note that $\blue{0\neq{}}v_i\in \text{Ker} (\Delta_{r-s},{}^s\azul{p_n})^\ast$. \azul{In fact, the} product $\prod_{i=r-s+1}^{r} v_i^n$ \blue{does not vanish} so that \begin{equation}\label{lower-Y}
   \text{TC}_{r,s}(\azul{p_n})\geq sn+1.
\end{equation}  
In particular, (\ref{estimacionpordebajo}) and (\ref{lower-Y}) yield
\begin{equation}\label{losdearriba}
\azul{\text{TC}_{r,r}(\azul{p_n}) = rn+1\text{ \ \ and \ \ }\text{TC}_{r,r-1}(\azul{p_n}) = (r-1)n+\epsilon_{r-1},}
\end{equation}
where $\epsilon_{r-1}\in\{1,2\}$\azul{. So we next assume in addition} $r-s\geq 2$. \azul{For $i=1,2,\ldots, r-s$, set} $u_i=\azul{q}^\ast_i \iota\in H^{\azul{n}}((S^n)^{r-s}\times (\mathbb{R}P^n)^s;\mathbb{Z}_2)$ and $w_i=u_i+u_{r-s}\azul{{}\in \text{Ker} (\Delta_{r-s},{}^s\azul{p_n})^\ast,}$ where $\azul{q_i}:\azul{(S^n)}^{r-s}\times \azul{(\mathbb{R}P^n)}^s\to \azul{S^n}$ is the projection onto the $i$\azul{-}th factor.
  
\azul{Then} 
\[
    \prod_{i=1}^{r-s-1}w_i\ \cdot \!\prod_{i=r-s+1}^{r} \!v_i^n=\sum_{j=1}^{r-s}u_1\cdots \widehat{u_j}\cdots u_{r-s}\ \cdot\!\prod_{i=r-s+1}^{r} v_i^n\neq 0
\text{\blue{,}}\] \blue{so that} $\text{TC}_{r,s}(\azul{p_n})\geq sn+r-s$, \blue{which is a linear improvement over} (\ref{lower-Y}). \blue{Taking into account~(\ref{estimacionpordebajo}), we \azul{then} see that~(\ref{losdearriba}) extends to
\begin{equation}\label{losdearribaextendidos}
\text{TC}_{r,s}(\azul{p_n}) = sn+\epsilon_s,\text{ for any } \azul{s\leq r,}
\end{equation} where $\epsilon_s\in\{r-s,r-s+1\}$ \azul{and, in fact, $\epsilon_r=1$.}
}

\begin{remark}\label{evenso}
\azul{Assume $n\in\{1,3,7\}$, so that $\mathbb{R}P^n$ has the structure of an $H$-space.} Corollary~\ref{h-space} \azul{then yields}
\begin{equation}\label{tcrr-1p}
\text{TC}_{r,r-1}(\azul{p_n})=\text{cat}\azul{((\mathbb{R}P^n)^{r-1})}=\text{TC}_r(\mathbb{R}P^n)=(r-1)n+1\azul{.}
\end{equation}
Note here \azul{that} $\text{cat}(S^n\times(\mathbb{R}P^n)^{r-1})=(r-1)n+2>\text{TC}_{r,r-1}(\azul{p_n})$\azul{, which is relevant for the discussion in the paragraph following the proof of Proposition~\ref{estimates-tcrs}. In addition, we note that the following constructions have been done in~\cite[Section~5]{aguilar2021}:}
\begin{itemize}
\item \azul{For $n\in\{1,3,7\}$, an explicit partition of $S^n\times \mathbb{R}P^n$ into $n+1$ subsets, each admitting a section for $e^{\azul{p_n}}_{2,1}:PS^n\to S^n\times \mathbb{R}P^n$, thus realizing~(\ref{tcrr-1p}) when $r=2$.}
\item \azul{For general $n$, an explicit partition of $S^n\times \mathbb{R}P^n$ into $n+2$ subsets, each admitting a section for $e^{\azul{p_n}}_{2,1}:PS^n\to S^n\times \mathbb{R}P^n$, thus realizing the estimate $\epsilon_{r-1}\leq2$ in~(\ref{losdearribaextendidos}).}
\end{itemize}
\end{remark}
       
Similarly, for the standard quotient map $q:S^{2n+1}\to\mathbb{C}P^n$\azul{,} we obtain the estimate $$sn+r-s\leq\text{TC}_{r,s}(q)\leq sn+r-s+1,$$ for any $s\leq r$.

\subsection{Fibrations over spheres}\label{overspheres}

\azul{For a fibration $f:X\to S^n$,~(\ref{tcresferas}) and} item (2) of Proposition~\ref{estimates-tcrs} \azul{yield}
$$
    r = \text{cat}((S^n)^{r-1}) \leq \text{TC}_{r,r-1}(f)\leq \text{TC}_r(S^n)\leq r+1.
$$
\azul{In particular, for} $n$ odd, we \azul{actually} have $\text{TC}_{r,r-1}(f)=\azul{\TC_r(S^n)}=r$. 
    On the other hand,\azul{~(\ref{tcresferas}),} Proposition~\ref{prop:lower-cat-sec} and item (3) of Proposition~\ref{estimates-tcrs} \azul{yield}
$$r\leq\text{TC}_{r}(S^n)\leq\text{TC}_{r,r}(f)\leq\text{cat}((S^n)^r)=r+1.$$
\azul{In particular, if $n$ is even, we get in fact} $\text{TC}_{r,r}(f)=\azul{\TC_r(S^n)}=r+1$.

\bibliographystyle{plain}

\end{document}